\numberwithin{equation}{section} \DeclareMathSizes{2}{10}{12}{13}
\newtheorem{thm}{Proposition}[section]
\newtheorem{Thm}[thm]{Theorem}
\newtheorem{rem}[thm]{Remark}
\newtheorem{lem}[thm]{Lemma}
\newtheorem{defn}[thm]{Definition}
\title{Cyclic cohomology of entwining structures}
\author{Mamta Balodi \footnote{Email : mamta.balodi@gmail.com} $\qquad$ Abhishek Banerjee \footnote{Email : abhishekbanerjee1313@gmail.com} \footnote{AB was partially supported by SERB Matrics fellowship MTR/2017/000112}    }
\date{}
\begin{document}

\maketitle 

\medskip

\centerline{\emph{Department of Mathematics, Indian Institute of Science, Bangalore, India.}}

\medskip

\begin{abstract} In this paper, we introduce and study a cyclic cohomology theory $H^\bullet_\lambda(A,C,\psi)$ for an entwining structure $(A,C,\psi)$ over a field $k$. We then provide a complete description of the cocycles and the coboundaries in this theory using entwined traces applied
to dg-entwining structures over $(A,C,\psi)$. We then apply these descriptions to construct a pairing
$
H^m_\lambda(A,C,\psi) \otimes H^n_\lambda(A',C',\psi') \longrightarrow H^{m+n}_\lambda(A \otimes A', C \otimes C', \psi \otimes \psi')
$, 
where $(A,C,\psi)$ and $(A',C',\psi')$ are entwining structures.
\end{abstract}

\medskip
MSC(2010) Subject Classification:    16W30, 16E40

\medskip
Keywords : Entwining structure, cyclic cohomology 
\section{Introduction}

An entwining structure, as introduced by Brzezi\'{n}ski  and Majid \cite{Brz1}, consists of an algebra $A$, a coalgebra $C$ and a map $\psi: C\otimes A\longrightarrow A\otimes C$ satisfying certain conditions. Together, an entwining structure $(A,C,\psi)$ behaves like a bialgebra or more generally, a comodule algebra over a bialgebra, as pointed out by Brzezi\'{n}ski \cite{Brz2}. There is also a well developed theory of modules over entwining structures, with applications to diverse objects such as Doi-Hopf modules, Yetter-Drinfeld modules and coalgebra Galois extensions  (see, for instance, \cite{Abu}, \cite{BBR},  \cite{Brz2.5}, \cite{Brz3}, \cite{Brz4}, \cite{BCT0}, \cite{BCT}, \cite{CaDe}, \cite{Jia}, \cite{Schbg}).

\smallskip
In \cite{Brz2}, Brzezi\'{n}ski introduced the Hochschild complex $\mathscr C^\bullet(A,C,\psi)$ of an entwining structure and proceeded to construct Gerstenhaber like structures on the cohomology groups. The starting point of this paper was to find a corresponding cyclic cohomology  theory $H^\bullet_\lambda(A,C,\psi)$ for an entwining structure. We then study the cocycles and coboundaries in this theory using differential graded algebras in a manner similar to Connes \cite{C1}, \cite{C2}.

\smallskip Similar to the classical approach of Connes \cite{C2}, we take our ``cyclic complex'' $\mathscr C^\bullet_\lambda(A,C,\psi)$ to be a certain subcomplex of the Hochschild complex $\mathscr C^\bullet(A,C,\psi)$ of Brzezi\'{n}ski \cite{Brz2}. In \cite{C2}, Connes showed that the  cocycles and coboundaries in the cyclic cohomology of an algebra $A$ can be described using traces on differential graded algebras over $A$. Accordingly, we show that the cocycles $Z_\lambda^\bullet(A,C,\psi)$ in our theory can be expressed as characters of ``entwined traces'' applied to dg-entwining structures over $(A,C,\psi)$. We also obtain a description of the coboundaries $B^\bullet_\lambda (A,C,\psi)$ in terms of characters of ``vanishing cycles'' over  $(A,C,\psi)$. These descriptions are then applied to construct a pairing
\begin{equation*}
H^m_\lambda(A,C,\psi) \otimes H^n_\lambda(A',C',\psi') \longrightarrow H^{m+n}_\lambda(A \otimes A', C \otimes C', \psi \otimes \psi')
\end{equation*} on cyclic cohomology groups. We mention here that we have previously studied in \cite{BBN} a modified version of the Hochschild theory
of Brzezi\'{n}ski \cite{Brz2} for entwining structures. In the future, we hope to study further the cohomology groups of entwining structures, on the lines of the usual cohomology theories for rings.

\smallskip
We now describe the paper in more detail. For an element $c\otimes a\in C\otimes A$, we will always suppress the summation and write
$\psi(c\otimes a)=a_\psi\otimes c^\psi\in A\otimes C$. We begin in Section 2 by introducing the cyclic complex $\mathscr C_\lambda^\bullet(A,C,\psi)$ of an entwining structure $(A,C,\psi)$. As a vector space, $\mathscr C^n_\lambda(A,C,\psi)$ consists of all $k$-linear maps $g:C\otimes A^{\otimes n+1}\longrightarrow k$ such that
\begin{equation}
g(c\otimes a_1\otimes ... \otimes a_{n+1})= (-1)^ng(c^\psi\otimes a_2\otimes a_3\otimes ... \otimes a_{n+1}\otimes a_{1\psi})
\end{equation} for $c\in C$ and $a_1,...,a_{n+1}\in A$. We show (see Theorem \ref{T2.2}) that $\mathscr C_\lambda^\bullet(A,C,\psi)$ is a subcomplex
of the Hochschild complex of Brzezi\'{n}ski \cite{Brz2}.  We denote by $H^\bullet_\lambda(A,C,\psi)$ the cohomology groups of $\mathscr C_\lambda^\bullet(A,C,\psi)$.

\smallskip
In Section 3, we consider dg-entwining structures over $(A,C,\psi)$. A dg-entwining structure $((R^\bullet,D^\bullet),C,\Psi^\bullet)$ consists of a (not necessarily unital) dg-algebra $(R^\bullet,D^\bullet)$ and an entwining $\Psi^\bullet:C\otimes R^\bullet\longrightarrow R^\bullet\otimes C$ that is a morphism of complexes. Along with an algebra morphism $\rho: A\longrightarrow R^0$ that is compatible with the respective entwinings $\psi:C\otimes A\longrightarrow A\otimes C$ and $\Psi^0:C\otimes R^0\longrightarrow R^0\otimes C$, we say that
 $((R^\bullet,D^\bullet),C,\Psi^\bullet)$ is a dg-entwining over $(A,C,\psi)$. In particular, we show that $\psi:C\otimes A\longrightarrow A\otimes C$ may be extended to produce a dg-entwining structure  $((\Omega^\bullet A,d^\bullet),C,\hat\psi)$ over $(A,C,\psi)$, where $(\Omega^\bullet A,d^\bullet)$
 is the universal differential graded algebra associated to $A$. Further, we show (see Theorem \ref{T3.4}) that  $((\Omega^\bullet A,d^\bullet),C,\hat\psi)$  is universal among dg-entwining structures over $(A,C,\psi)$. 
 
 \smallskip
Let $((R^\bullet,D^\bullet),C,\Psi^\bullet)$ be a dg-entwining structure over $(A,C,\psi)$. By an $n$-dimensional closed graded entwined trace on $((R^\bullet,D^\bullet),C,\Psi^\bullet)$, we will mean a linear map $T:C\otimes R^n\longrightarrow k$ which satisfies
\begin{equation}
T(c\otimes D(r))=0\qquad T(c\otimes r'r'')=(-1)^{ij}T(c^\Psi\otimes r''r'_\Psi)
\end{equation} for all $c\in C$, $r\in R^{n-1}$ and $r'\in R^i$, $r''\in R^j$ such that $i+j=n$.  Together, the datum $((R^\bullet,D^\bullet),C,\Psi^\bullet,\rho,T)$ will be referred to as an $n$-dimensional entwined cycle over $(A,C,\psi)$.   In Theorem \ref{T4.4}, we show that each cyclic cocycle
$g\in Z^n_\lambda(A,C,\psi)$ may be expressed as the character of an $n$-dimensional entwined cycle over $(A,C,\psi)$. 

\smallskip
Let $M_r(A)$ be the ring of $(r\times r)$-matrices with entries in $A$.  Then $\psi:C\otimes A\longrightarrow A\otimes C$ extends in an obvious manner
to an entwining
\begin{equation}
C\otimes M_r(A)=C\otimes (A\otimes M_r(k))\longrightarrow (A\otimes M_r(k))\otimes C =M_r(A)\otimes C
\end{equation}
that we continue to denote by $\psi$.  In Section 5, we show Morita invariance for Hochschild cohomology groups $HH^\bullet(A,C,\psi)$ of matrix rings.  For this, we  show that the  morphisms on the Hochschild complex induced by the inclusion
$inc_1:A\longrightarrow M_r(A)$ and the generalized trace $tr:M_r(A)^{\otimes n+1}\longrightarrow A^{\otimes n+1}$, $n\geq 0$ are homotopy inverses of each other.  It follows (see Proposition \ref{Prps5.5}) that we have mutually inverse isomorphisms
\begin{equation}
inc_{1}^\bullet: HH^\bullet(M_r(A),C,\psi)\longrightarrow HH^\bullet(A,C,\psi)\qquad tr^\bullet: HH^\bullet(A,C,\psi)\longrightarrow HH^\bullet(M_r(A),C,\psi)
\end{equation}
of Hochschild cohomology groups.

\smallskip
The Morita invariance for cyclic cohomology groups $H^\bullet_\lambda(A,C,\psi)$ of matrix rings is shown in Section 6. For this, we consider the subspace 
$\mathscr I^n(A,C,\psi)\subseteq \mathscr C^n(A,C,\psi)$  consisting of maps $g:C\otimes A^{\otimes n+1}\longrightarrow k$ satisfying 
\begin{equation}
g(c\otimes a_1\otimes ...\otimes a_{n+1})= g(c^{\psi^{n+1}}\otimes a_{1\psi}\otimes a_{2\psi}\otimes ...\otimes a_{n+1\psi})
\end{equation} for $c\in C$ and $a_1$,...,$a_{n+1}\in A$. We check that $\mathscr I^\bullet(A,C,\psi)$ is a subcomplex of $\mathscr C^\bullet(A,C,\psi)$ and that there are induced maps $inc_1^\bullet:\mathscr I^\bullet(M_r(A),C,\psi)\longrightarrow \mathscr I^\bullet(A,C,\psi)$ and $tr^\bullet:
\mathscr I^\bullet(A,C,\psi)\longrightarrow \mathscr I^\bullet(M_r(A),C,\psi)$ which are homotopy inverses of each other.
We also show  that $\mathscr I^\bullet(A,C,\psi)$ is a cocyclic module such that $\mathscr C^\bullet_\lambda(A,C,\psi)$ is the subspace invariant under the action of the cyclic operator on $\mathscr I^\bullet(A,C,\psi)$.  It follows (see Theorem \ref{T6.4}) that we have mutually inverse isomorphisms
\begin{equation}
inc_{1}^\bullet: H^\bullet_\lambda(M_r(A),C,\psi)\longrightarrow H^\bullet_\lambda(A,C,\psi)\qquad tr^\bullet: H^\bullet_\lambda(A,C,\psi)\longrightarrow H^\bullet_\lambda(M_r(A),C,\psi)
\end{equation}
of cyclic cohomology groups.

\smallskip
The main purpose of  Section 7 is to obtain a description for the space $B_\lambda^\bullet(A,C,\psi)$ of coboundaries in $\mathscr C^\bullet_\lambda(A,C,\psi)$. We consider the group $\mathbb U(A)$ of units of $A$ and take the subcollection
\begin{equation}
\mathbb U_\psi(A):=\{x \in \mathbb U(A)~ |~ \psi(c \otimes x)=x \otimes c~\mbox{for every}\textrm{ } c \in C\}
\end{equation} We verify that $\mathbb U_\psi(A)$ is a subgroup of $\mathbb U(A)$. We also show that conjugation by an element
$x\in \mathbb U_\psi(A)$ induces the identity map on cyclic cohomology groups $H^\bullet_\lambda(A,C,\psi)$. Using the Morita invariance established
in Section 6, we now obtain a set of sufficient conditions for the cyclic cohomology of an entwining structure to be zero. Accordingly (see Definition \ref{vancyc}), an $n$-dimensional 
entwined cycle $((R^\bullet,D^\bullet),C,\Psi^\bullet,\rho,T)$ is said to be vanishing if $(R^0,C,\Psi^0)$ satisfies these conditions. 

\smallskip We now take $k=\mathbb C$.  In Theorem \ref{T7.6}, we show that a cocycle $g\in Z_\lambda^n(A,C,\psi)$ is a coboundary if and only if it is the character of an $n$-dimensional entwined vanishing cycle
over $(A,C,\psi)$. In particular, the entwined vanishing cycle corresponding to a coboundary $g\in B^\bullet_\lambda(A,C,\psi)$ is constructed with the help of a certain algebra $\mathbf C$ of infinite matrices with complex entries used in \cite{C2}. Taken together, Theorem \ref{T4.4} and Theorem \ref{T7.6} provide a complete description of the cocycles and the coboundaries in the cyclic theory of entwined structures, developed in a manner similar to Connes \cite{C2}.  Our final result is Theorem \ref{fnT}, where we apply these descriptions to construct a pairing
\begin{equation}
H^m_\lambda(A,C,\psi) \otimes H^n_\lambda(A',C',\psi') \longrightarrow H^{m+n}_\lambda(A \otimes A', C \otimes C', \psi \otimes \psi')\qquad m,n\geq 0
\end{equation}
where $(A,C,\psi)$ and $(A',C',\psi')$ are entwining structures.

\section{Cyclic cohomology of an entwining structure}

Let $k$ be a field. Throughout this section and the rest of this paper, we let $A$ be a unital algebra over $k$ and let $C$ be a counital coalgebra over $k$. The product on 
$A$ will be denoted by $\theta: A\otimes A\longrightarrow A$. The coproduct $\Delta :C\longrightarrow C\otimes C$ 
will always be expressed using Sweedler notation $\Delta(c)=c_1\otimes c_2$ for any $c\in C$. The counit on $C$ will be denoted by $\varepsilon :C\longrightarrow k$.  For the sake of convenience, we will denote the tensor powers $A^{\otimes n}$ of the algebra $A$ simply by $A^n$.  Similarly, an element of $C\otimes A^{\otimes n}$ will
be denoted simply by $(c,a_1,...,a_n)$.  We now recall the notion of an entwining structure, introduced by Brzezi\'{n}ski and Majid in \cite{Brz1}. 

\begin{defn}\label{Def2.1} Let $k$ be a field. An entwining structure $(A,C,\psi)$ over $k$ consists of a unital $k$-algebra $A$, a counital $k$-coalgebra $C$ and a $k$-linear map $\psi:C\otimes A
\longrightarrow A\otimes C$ satisfying the following conditions

\begin{equation}\label{ent}
\begin{array}{c}
\psi(c\otimes \theta(a\otimes b))=\psi(c\otimes ab)=(ab)_\psi\otimes c^\psi = a_\psi b_\psi\otimes {c^\psi}^\psi=((\theta\otimes id_C)\circ (id_A\otimes \psi)\circ (\psi\otimes id_A))(c\otimes a\otimes b)\\
(id_A\otimes \Delta)(\psi (c\otimes a))=a_\psi\otimes \Delta(c^\psi)={a_{\psi}}_{\psi}\otimes c_1^\psi \otimes c_2^\psi =((\psi\otimes id_C)\circ (id_C\otimes \psi))(\Delta(c)\otimes a) \\
a_\psi\varepsilon (c^\psi)=\varepsilon(c)a \qquad 1_\psi\otimes c^\psi = 1\otimes c\\
\end{array}
\end{equation}
Here, the summation has been suppressed by writing $\psi(c\otimes a)=a_\psi\otimes c^\psi$ for any $c\in C$ and $a\in A$.

\end{defn}

In this paper, if $A'$ is a non-unital algebra, we will still say that $(A',C,\psi:C\otimes A'\longrightarrow A'\otimes C)$ is an entwining structure if it satisfies all the conditions in \eqref{ent} except for the last condition $1_\psi\otimes c^\psi = 1\otimes c$.

\smallskip

Given an entwining structure $(A,C,\psi)$, Brzezi\'{n}ski \cite{Brz2} introduced the Hochschild complex $\mathscr C^\bullet((A,C,\psi);M)$ of $(A,C,\psi)$ with coefficients in an $A$-bimodule $M$.
\begin{equation}\label{brc}
\begin{array}{c}
\mathscr C^n((A,C,\psi);M)=Hom(C\otimes A^{n},M)\qquad \delta^n: Hom(C\otimes A^{n},M)\longrightarrow Hom(C\otimes A^{n+1},M)\\
\begin{array}{ll}
\delta^n(f)(c,a_1,...,a_{n+1}) & = a_{1\psi}\cdot f(c^\psi,a_2,...,a_{n+1})+\underset{i=1}{\overset{n}{\sum}}(-1)^i f(c,a_1,...,a_ia_{i+1},...,a_{n+1})\\
& \textrm{ } \textrm{ }+ (-1)^{n+1} f(c,a_1,...,a_{n})\cdot a_{n+1}\\
\end{array}
\end{array}
\end{equation}
The cohomology of this complex will be denoted by $HH^\bullet((A,C,\psi);M)$. In particular, when $M=A^\ast=Hom(A,k)$ made into an $A$-bimodule as follows
\begin{equation}\label{bimod}
(a\cdot f\cdot a')(a''):=f(a'a''a) \qquad f\in A^*=Hom(A,k)\qquad a,a',a''\in A
\end{equation}
this complex will be denoted by $\mathscr C^\bullet(A,C,\psi)$ and its cohomology groups by $HH^\bullet(A,C,\psi)$. It is immediate that an element 
$f\in \mathscr C^n(A,C,\psi)=Hom(C\otimes A^{n},A^*)$ may also be expressed as a linear map $g:C\otimes A^{n+1}\longrightarrow k$ by setting
\begin{equation}\label{iso1}
g(c,a_1,...,a_{n+1})=f(c,a_1,...,a_{n})(a_{n+1})
\end{equation}
We now define a subspace $\mathscr C^n_\lambda(A,C,\psi)\subseteq \mathscr C^n(A,C,\psi)=Hom(C\otimes A^{n},A^*)$ by taking the collection of all
$f\in Hom(C\otimes A^{n},A^*)$ that satisfy
\begin{equation}\label{lambda} 
 f(c,a_1,...,a_{n})(a_{n+1})=(-1)^nf(c^\psi,a_2,...,a_{n+1})(a_{1\psi}) 
\end{equation} for every $(c,a_1,...,a_{n+1})\in C\otimes A^{n+1}$.  Equivalently, using \eqref{iso1}, the space $\mathscr C^n_\lambda(A,C,\psi)$ may also be described as the collection of all $g\in Hom(C\otimes A^{n+1},k)$
such that
\begin{equation}\label{lambda1}
g(c,a_1,...,a_{n+1})=(-1)^ng(c^\psi,a_{2},a_3,...,a_{n+1},a_{1\psi})
\end{equation}

\begin{Thm}\label{T2.2} Let $(A,C,\psi)$ be an entwining structure. Then, $(\mathscr C^\bullet_\lambda(A,C,\psi),\delta^\bullet)$ is a subcomplex of the Hochschild complex of $(A,C,\psi)$. 

\end{Thm}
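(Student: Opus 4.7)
The plan is to take $g \in \mathscr C_\lambda^n(A,C,\psi)$ and verify directly that $\delta^n g$ continues to satisfy the cyclic identity \eqref{lambda1} one degree higher. First I would translate the boundary formula \eqref{brc} from the $f$-description into the $g$-description via \eqref{iso1}; using the bimodule structure \eqref{bimod} on $A^*$, this gives
\begin{equation*}
(\delta g)(c,a_1,\ldots,a_{n+2}) = g(c^\psi,a_2,\ldots,a_{n+1},a_{n+2}a_{1\psi}) + \sum_{i=1}^{n+1}(-1)^i g(c,a_1,\ldots,a_ia_{i+1},\ldots,a_{n+2}),
\end{equation*}
where the $i = n+1$ summand is the ``right'' boundary $(-1)^{n+1} g(c,a_1,\ldots,a_n,a_{n+1}a_{n+2})$. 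The goal is to show that this equals $(-1)^{n+1}(\delta g)(c^\psi,a_2,\ldots,a_{n+2},a_{1\psi})$.

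Substituting the rotated tuple $(c^\psi;a_2,\ldots,a_{n+2},a_{1\psi})$ into the same boundary formula expands the right-hand side into $n+3$ pieces, and I would match these against the left-hand side one by one. The $i=n+1$ contribution on the right, with combined sign $(-1)^{n+1}\cdot(-1)^{n+1}=+1$, reproduces the first term $g(c^\psi,a_2,\ldots,a_{n+1},a_{n+2}a_{1\psi})$ of the left-hand side. For each boundary index $i\in\{2,\ldots,n+1\}$ on the left, I apply \eqref{lambda1} to rotate the first $A$-argument to the end; the resulting factor $(-1)^n$ together with a reindexing $k=i-1$ shows that such a term matches the $k$-th summand on the right, since $(-1)^{i+n}=(-1)^{n+1+k}$.

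The delicate step is the $i=1$ term on the left, $-g(c,a_1a_2,a_3,\ldots,a_{n+2})$, which must match the leftover contribution on the right, namely $(-1)^{n+1}g(c^{\psi\psi},a_3,\ldots,a_{n+2},a_{1\psi}a_{2\psi})$ coming from the $g(\bar c^\psi,\ldots)$ piece of the rotated boundary. Applying \eqref{lambda1} to the left rotates the new first argument $a_1a_2$ to the end and yields $(-1)^{n+1} g(c^\psi,a_3,\ldots,a_{n+2},(a_1a_2)_\psi)$. The equality of these two expressions then follows from the multiplicativity axiom of the entwining in \eqref{ent}, which asserts $(a_1a_2)_\psi \otimes c^\psi = a_{1\psi}a_{2\psi} \otimes c^{\psi\psi}$ in $A \otimes C$: pairing with the linear form $A \otimes C \to k$, $a \otimes d \mapsto g(d,a_3,\ldots,a_{n+2},a)$, gives precisely the needed identity.

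The main obstacle is purely combinatorial bookkeeping of signs and of implicit Sweedler-type $\psi$-superscripts. Conceptually, however, the only place where rotating an ordinary $A$-factor genuinely clashes with the twisted rotation of $\psi$-entwined factors is at the $i=1$ boundary, and this clash is reconciled exactly by the compatibility of $\psi$ with the product $\theta$.
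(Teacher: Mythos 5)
Your proposal is correct and is essentially the paper's own argument: the paper carries out exactly the same term-by-term matching (first term of the left side against the last term of the rotated side, the middle boundary terms via the cyclic condition \eqref{lambda}, and the $i=1$ term against the $c^{\psi\psi}$-term via the multiplicativity axiom $(a_1a_2)_\psi\otimes c^\psi=a_{1\psi}a_{2\psi}\otimes c^{\psi\psi}$, cf.\ \eqref{rel3}), merely written in the $f$-description $Hom(C\otimes A^n,A^*)$ rather than the $g$-description $Hom(C\otimes A^{n+1},k)$. The two are equivalent under \eqref{iso1}, and your sign bookkeeping checks out.
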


\begin{proof} We consider $f\in \mathscr C^n_\lambda(A,C,\psi)$. We need to verify that $\delta^n(f)\in \mathscr C^{n+1}_\lambda(A,C,\psi)$, i.e., 
\begin{equation} 
 \delta^n(f)(c,a_1,...,a_{n+1})(a_{n+2})=(-1)^{n+1}\delta^n(f)(c^\psi,a_2,...,a_{n+2})(a_{1\psi}) 
\end{equation} Using the description of the differential in \eqref{brc} and the $A$-bimodule structure of $A^*$ described in \eqref{bimod}, we see that
\begin{equation}\label{one}
\begin{array}{ll}
\delta^n(f)(c,a_1,...,a_{n+1})(a_{n+2}) & =  f(c^\psi,a_2,...,a_{n+1})(a_{n+2}a_{1\psi})+\underset{i=1}{\overset{n}{\sum}}(-1)^i f(c,a_1,...,a_ia_{i+1},...,a_{n+1})(a_{n+2})\\
& \textrm{ } \textrm{ }+ (-1)^{n+1} f(c,a_1,...,a_{n}) (a_{n+1}a_{n+2})\\ & \\ 
\delta^n(f)(c^\psi,a_2,...,a_{n+2})(a_{1\psi}) & =  f(c^{\psi\psi},a_3,...,a_{n+2})(a_{1\psi}a_{2\psi})+\underset{i=2}{\overset{n+1}{\sum}}(-1)^{i-1} f(c^\psi,a_2,...,a_{i}a_{i+1},...,a_{n+2})(a_{1\psi})\\
& \textrm{ } \textrm{ }+ (-1)^{n+1} f(c^\psi,a_2,...,a_{n+1})(a_{n+2}a_{1\psi})\\
\end{array}
\end{equation} Applying condition \eqref{lambda}, we obtain
\begin{equation}\label{rel1}
\underset{i=2}{\overset{n}{\sum}}(-1)^i f(c,a_1,...,a_ia_{i+1},...,a_{n+1})(a_{n+2})=(-1)^{n+1} \underset{i=2}{\overset{n}{\sum}}(-1)^{i-1} f(c^\psi,a_2,...,a_{i}a_{i+1},...,a_{n+2})(a_{1\psi})
\end{equation} as well as
\begin{equation}\label{rel2} 
 (-1)^{n+1} f(c,a_1,...,a_{n}) (a_{n+1}a_{n+2}) = (-1)^{n+1} (-1)^nf(c^\psi, a_2,...,a_{n+1}a_{n+2})(a_{1\psi})
\end{equation} Finally, using \eqref{lambda} as well as the properties of an entwining structure in \eqref{ent}, we obtain
\begin{equation}\label{rel3}
-f(c,a_1a_2,...,a_{n+1})(a_{n+2})=(-1)^{n+1}f(c^\psi,a_3,...,a_{n+2})((a_1a_2)_\psi)=(-1)^{n+1} f(c^{\psi\psi},a_3,...,a_{n+2})(a_{1\psi}a_{2\psi})
\end{equation} The result is now clear from \eqref{one}, \eqref{rel1}, \eqref{rel2} and \eqref{rel3}. 
\end{proof}

\begin{defn} Let $(A,C,\psi)$ be an entwining structure. Then, we will say that  $(\mathscr C^\bullet_\lambda(A,C,\psi),\delta^\bullet)$ is the cyclic complex
of $(A,C,\psi)$ and the cyclic cohomology groups will be denoted by $H^\bullet_\lambda(A,C,\psi)$. The cocyles and coboundaries in $(\mathscr C^\bullet_\lambda(A,C,\psi),\delta^\bullet)$ 
will be denoted respectively by $Z^\bullet_\lambda(A,C,\psi)$ and $ B^\bullet_\lambda(A,C,\psi)$.

\end{defn}

It is clear that the Hochschild complex  $\mathscr C^\bullet(A,C,\psi)$ may be rewritten with terms $Hom(C\otimes A^{\otimes n+1},k)$. In that case, the Hochschild differential may be expressed as follows
\begin{equation}\label{hochg}
\begin{array}{c}
\mathscr C^n(A,C,\psi)=Hom(C\otimes A^{n+1},k)\qquad \delta^n: Hom(C\otimes A^{n+1},k)\longrightarrow Hom(C\otimes A^{n+2},k)\\
\delta^n(g)(c,a_1,...,a_{n+2}) =  g(c^\psi,a_2,...,a_{n+1},a_{n+2}a_{1\psi})+\underset{i=1}{\overset{n+1}{\sum}}(-1)^ig(c,a_1,...,a_ia_{i+1},...,a_{n+2})\\
\end{array}
\end{equation}

\section{Entwining of the universal differential graded algebra} 

We continue with $(A,C,\psi)$ being an entwining structure over $k$. We begin this section by considering an entwining structure where the algebra is differential graded.

\begin{defn}\label{grent} Let $(R^\bullet,D^\bullet)$ be a differential (non-negatively) graded,  not necessarily unital $k$-algebra and let $C$ be a counital $k$-coalgebra. A dg-entwining structure over $k$ consists of a $k$-linear map 
\begin{equation*}
\Psi^\bullet: C\otimes R^\bullet \longrightarrow R^\bullet\otimes C
\end{equation*}
of degree zero such that

\smallskip
(1) $\Psi^\bullet :( C\otimes R^\bullet,id_C\otimes D^\bullet) \longrightarrow (R^\bullet \otimes C,D^\bullet\otimes id_C)$ is a morphism of complexes, i.e.,
\begin{equation*}
D^n(r_\Psi)\otimes c^\Psi= (D^n\otimes C)(\Psi^n(c\otimes r))=\Psi^{n+1}(c\otimes D^n(r))=D^n(r)_\Psi\otimes c^\Psi
\end{equation*} for $c\in C$, $r\in R^n$. 

\smallskip
(2) The tuple $(R,C,\Psi)$ is an entwining structure. 
\end{defn}

\begin{defn}\label{grent1}  Let $(A,C,\psi)$ be an entwining structure. A dg-entwining structure over $(A,C,\psi)$ consists of a dg-entwining $((R^\bullet,D^\bullet), C,\Psi^\bullet)$ and a  $k$-algebra morphism $\rho: A\longrightarrow R^0$ such that we have a commutative diagram
\begin{equation}\label{3.1cd}
\begin{CD}
C\otimes A @>\psi>> A\otimes C\\
@Vid_C\otimes \rho VV @VV\rho\otimes id_CV\\
C\otimes R^0 @>\Psi^0>> R^0\otimes C\\
\end{CD}
\end{equation}

\end{defn}

Given the $k$-algebra $A$, we now consider the algebra $\tilde A:=A\oplus k $ with multiplication given by
\begin{equation*}
(a+\mu ) \cdot (a'+\nu )=( aa'+\mu a' + \nu a )+ \mu \nu  
\end{equation*} for $a$, $a'\in A$ and scalars $\mu$, $\nu\in k$. It is clear that $\tilde A$ is also a unital algebra, with $1\in k$ being the unit. However, we note that
the canonical inclusion $A\hookrightarrow \tilde A$ of algebras is not necessarily unital.  

\smallskip
We now consider the universal differential graded algebra $(\Omega^\bullet A,d^\bullet)$ associated to $A$ (see \cite[$\S$ II.1]{C2}). As a graded vector space, it is given by setting 
$\Omega^n A=\tilde A\otimes  A^{\otimes n}$ for $n>0$ and $\Omega^0A=A$. For $n>0$, an element in $\tilde A\otimes A^{\otimes n}$ is a linear combination of terms of the form
\begin{equation}
(a_0+\mu)da_1...da_n \qquad a_i\in A, \mu\in k
\end{equation} By abuse of notation, we will use $(a_0+\mu)da_1...da_n$ to denote an element of $\Omega^nA$ even for $n=0$. In this case, it will be understood that $\mu=0$.  The multiplication in $\Omega A$ is determined by 
\begin{equation}\label{eq3.3}
a_0da_1....da_n=(a_0)\cdot (da_1) \cdot ... \cdot (da_n)\qquad (da)\cdot a'=d(aa')-a (da')\qquad da_1....da_n= (da_1) \cdot ... \cdot (da_n)
\end{equation} for $a$, $a'$, $a_0$, ..., $a_n\in A$. More generally, for elements $p_0$, ...., $p_i$, $q_0$, ..., $q_j\in A$ and $\mu$, $\nu\in k$, we have
\begin{equation}\label{omega}
\begin{array}{l}
((p_0+\mu)dp_1...dp_i)\cdot ((q_0+\nu)dq_1...dq_j)\\
= (p_0+\mu)\left(dp_1....dp_{i-1}d(p_iq_0)dq_1...dq_j+\underset{l=1}{\overset{i-1}{\sum}}(-1)^{i-l}dp_1...d(p_lp_{l+1})...dp_idq_0dq_1...dq_j\right)\\
\quad + (-1)^i(p_0+\mu)p_1dp_2...dp_idq_0dq_1...dq_j+\nu (p_0+\mu)dp_1...dp_idq_1...dq_j\\
\end{array}
\end{equation}
The differential on $\Omega A$ is determined by setting
\begin{equation}\label{eq3.4}
d((a_0+\mu)da_1...da_n )=da_0da_1...da_n
\end{equation} We also define a morphism
\begin{equation}\label{eq3.5}
\hat\psi : C\otimes \Omega A \longrightarrow \Omega A\otimes C \qquad \hat\psi(c\otimes ((a_0+\mu)da_1...da_n))=(a_{0\psi}da_{1\psi}...da_{n\psi})\otimes c^{\psi^{n+1}}+\mu ( da_{1\psi}...da_{n\psi})\otimes c^{\psi^{n}}
\end{equation} In particular, we have $(da)_\psi\otimes c^\psi=\hat\psi(c\otimes da)=
da_\psi\otimes c^\psi$. 

\begin{thm}\label{P3.3} Let $(A,C,\psi)$ be an entwining structure. Then, $((\Omega^\bullet A,d^\bullet),C,\hat\psi)$ is a dg-entwining structure over $(A,C,\psi)$. 

\end{thm}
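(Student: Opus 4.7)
The plan is to verify in turn the three requirements of Definition \ref{grent1}: that $\hat\psi$ is a morphism of complexes (condition (1) of Definition \ref{grent}), that $(\Omega^\bullet A, C, \hat\psi)$ satisfies the (non-unital) entwining axioms of \eqref{ent}, and that the compatibility square \eqref{3.1cd} commutes. The last of these is essentially free: $\rho$ will be taken to be the inclusion $A = \Omega^0 A \hookrightarrow \Omega^\bullet A$, and the restriction of $\hat\psi$ to $C \otimes \Omega^0 A$ agrees with $\psi$ by \eqref{eq3.5} (the case $n=0$, $\mu=0$). So the bulk of the argument is the first two points.

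For the chain map condition, I would apply $\hat\psi$ to $c \otimes d((a_0+\mu)da_1\cdots da_n) = c \otimes da_0 da_1 \cdots da_n$ using \eqref{eq3.5}; the resulting single term $da_{0\psi}da_{1\psi}\cdots da_{n\psi}\otimes c^{\psi^{n+1}}$ should match $(d\otimes id_C)$ applied to the two-term formula $\hat\psi(c\otimes (a_0+\mu)da_1\cdots da_n)$, where the $\mu$-term vanishes because $d^2 = 0$. This is a short direct check.

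The main obstacle is the multiplicativity axiom in \eqref{ent}, i.e., $\hat\psi(c\otimes \omega\eta) = \omega_{\hat\psi}\eta_{\hat\psi}\otimes c^{\hat\psi\hat\psi}$. My strategy is to reduce to generators: since $\Omega^\bullet A$ (modulo the unit summand of $\tilde A$) is generated as a graded algebra by the degree-zero elements $a$ and the degree-one elements $da$, I would verify multiplicativity on the four cases $\omega,\eta \in \{a, da\}$. Each case follows from the corresponding axiom for $\psi$ together with the chain-map property just established; for instance, $\hat\psi(c\otimes da\cdot b) = \hat\psi(c\otimes d(ab)) - \hat\psi(c\otimes adb)$ reduces via the Leibniz rule and multiplicativity of $\psi$ to $da_\psi b_\psi \otimes c^{\psi\psi}$. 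For general products one then uses the explicit expansion \eqref{omega} and associativity of $\Omega^\bullet A$, together with an inductive argument on the total degree $i + j$, the scalar contributions coming from $\mu$ and $\nu$ handled by the two-term structure of \eqref{eq3.5}.

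Finally, the coaction compatibility $(id \otimes \Delta)\hat\psi = (\hat\psi\otimes id_C)(id_C \otimes \hat\psi)(\Delta \otimes id)$ and the counit condition $r_{\hat\psi}\varepsilon(c^{\hat\psi}) = \varepsilon(c)r$ for $r \in R^n$ follow by applying the corresponding axioms for $\psi$ to each of the $n+1$ (respectively $n$) tensor factors in $(a_0+\mu)da_1\cdots da_n$ and tracking the exponent on $c^{\psi^{n+1}}$, together with counitality of $C$ to collapse iterated applications $c^{\psi^{n+1}}$ down as needed. Since we are in the non-unital setting, the last axiom $1_\psi \otimes c^\psi = 1 \otimes c$ of \eqref{ent} is not required and is skipped, in accordance with the convention stated immediately after Definition \ref{Def2.1}.
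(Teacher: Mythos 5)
Your proposal is correct and follows essentially the same route as the paper's proof: both check the chain-map property directly from \eqref{eq3.4} and \eqref{eq3.5}, reduce multiplicativity to the generators $a$ and $da$ via the defining relation $(da)\cdot a' = d(aa') - a(da')$ together with the multiplicativity of $\psi$, treat the remaining entwining axioms as a routine direct computation, and conclude by observing that $\hat\psi^0$ coincides with $\psi$ on $C\otimes\Omega^0 A = C\otimes A$ so that \eqref{3.1cd} commutes with $\rho = id_A$.
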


\begin{proof} From \eqref{eq3.4} and \eqref{eq3.5}, it is evident that $\hat\psi$ is a morphism of complexes. It is also clear that
\begin{equation}\label{eq3.6}
\begin{array}{ll}
&\hat\psi(c\otimes (a_0)\cdot (da_1) \cdot ... \cdot (da_n))=\hat\psi(c\otimes a_0da_1....da_n)=(a_{0\psi}da_{1\psi}...da_{n\psi})\otimes c^{\psi^{n+1}}=((a_{0\psi})\cdot (da_{1\psi})...\cdot (da_{n\psi}))\otimes c^{\psi^{n+1}}\\
&\hat\psi(c\otimes  (da_1) \cdot ... \cdot (da_n))=\hat\psi(c\otimes da_1....da_n)=(da_{1\psi}...da_{n\psi})\otimes c^{\psi^{n}}=( (da_{1\psi})...\cdot (da_{n\psi}))\otimes c^{\psi^{n+1}}\\
\end{array}
\end{equation}  for $a_0$, ..., $a_n\in A$. Further, for $a$, $a'\in A$, we have
\begin{equation}\label{eq3.7}
\begin{array}{ll}
\hat\psi(c\otimes ((da)\cdot a')) & =\hat\psi (c\otimes d(aa'))-\hat\psi (c\otimes (a(da')))\\
& = d(aa')_{\psi}\otimes c^\psi - (a_\psi da'_\psi)\otimes c^{\psi\psi} \\
& = d(a_\psi a'_\psi )\otimes c^{\psi\psi} - (a_\psi da'_\psi)\otimes c^{\psi\psi} \\
& =( (da_\psi)\cdot a'_\psi)\otimes c^{\psi\psi}=( (da)_\psi\cdot a'_\psi)\otimes c^{\psi\psi}\\
\end{array}
\end{equation} Together, \eqref{eq3.6} and \eqref{eq3.7} show that $\hat\psi$ is well behaved with respect to the multiplication
on $\Omega A$. The other conditions in \eqref{ent} for  $(\Omega A,C,\hat\psi)$ to be an entwining structure may also be verified by direct computation.  Finally, it is clear that the maps
$\hat\psi^0:C\otimes   A=C\otimes \Omega^0 A\longrightarrow \Omega^0 A\otimes C =  A\otimes C$ and $\psi: C\otimes A\longrightarrow A\otimes C$
are identical. This proves the result.

\end{proof}

\begin{Thm}\label{T3.4}  Let $((R^\bullet,D^\bullet), C,\Psi^\bullet)$
be a dg-entwining structure over $(A,C,\psi)$ consisting of a $k$-algebra homomorphism $\rho: A\longrightarrow R^0$. Then, there is an induced morphism $\hat\rho:(\Omega^\bullet A,d^\bullet)\longrightarrow  (R^\bullet,D^\bullet)$ of dg-algebras such that $\hat\rho |_A=\rho: A\longrightarrow R^0$ and we have a commutative diagram
\begin{equation}\label{3.8cd}
\begin{CD}
C\otimes \Omega A @>\hat\psi >>  \Omega A\otimes C \\
@Vid_C\otimes \hat\rho VV @VV\hat\rho\otimes id_CV \\
C\otimes R @>\Psi>> R\otimes C\\
\end{CD}
\end{equation}
\end{Thm}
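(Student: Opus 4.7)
The plan is to construct $\hat\rho$ explicitly, verify that it defines a dg-algebra morphism extending $\rho$, and then check the entwining compatibility \eqref{3.8cd}. On a generic element $(a_0+\mu)da_1\cdots da_n$ of $\Omega^n A$ with $n\geq 1$, I would set
\begin{equation*}
\hat\rho\bigl((a_0+\mu)da_1\cdots da_n\bigr):=\rho(a_0)D(\rho(a_1))\cdots D(\rho(a_n))+\mu D(\rho(a_1))\cdots D(\rho(a_n)),
\end{equation*}
interpreted as an element of $R^n$; the scalar $\mu$ merely multiplies an element of $R^n$, so no unit in $R$ is needed. For $n=0$ I set $\hat\rho(a_0)=\rho(a_0)$. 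This assignment is forced on generators by the condition that $\hat\rho$ be a dg-algebra morphism extending $\rho$, so uniqueness is automatic once existence is established.

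Next I would confirm that $\hat\rho$ is a morphism of graded $k$-algebras. Expanding a product of two generators in $\Omega A$ via formula \eqref{omega}, and expanding the corresponding product of their $\hat\rho$-images in $R$ via the graded Leibniz rule $D(rs)=D(r)s+(-1)^{|r|}rD(s)$, both sides reduce to the same linear combination of monomials in the $\rho(a_i)$ and $D(\rho(a_j))$. Compatibility with the differential is immediate from \eqref{eq3.4}: applying $D$ to $\hat\rho\bigl((a_0+\mu)da_1\cdots da_n\bigr)$ via Leibniz produces only the leading term $D(\rho(a_0))D(\rho(a_1))\cdots D(\rho(a_n))$, since every other term contains a factor $D^2(\rho(a_i))=0$, and this coincides with $\hat\rho(d((a_0+\mu)da_1\cdots da_n))$.

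For the entwining compatibility \eqref{3.8cd}, the three inputs are: the degree-zero square \eqref{3.1cd}, giving $\Psi^0(c\otimes\rho(a))=\rho(a_\psi)\otimes c^\psi$; the chain-map property of Definition \ref{grent}(1), giving $\Psi^{n+1}(c\otimes D(r))=D(r_\Psi)\otimes c^\Psi$; and the multiplicativity of $\Psi$ from the first axiom in \eqref{ent}, which when iterated yields $\Psi(c\otimes r_1\cdots r_m)=r_{1\Psi}\cdots r_{m\Psi}\otimes c^{\Psi^m}$. Combining these inductively, applying $\Psi$ to $\rho(a_0)D(\rho(a_1))\cdots D(\rho(a_n))$ replaces each factor $\rho(a_i)$ by $\rho(a_{i\psi})$ and introduces one $\psi$ on $c$ per factor, producing $\rho(a_{0\psi})D(\rho(a_{1\psi}))\cdots D(\rho(a_{n\psi}))\otimes c^{\psi^{n+1}}$; this is exactly $(\hat\rho\otimes id_C)\hat\psi(c\otimes a_0\,da_1\cdots da_n)$ by \eqref{eq3.5}. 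The $\mu$-summand is handled identically with $n$ in place of $n+1$.

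The main obstacle will be the bookkeeping in this last step: tracking the order in which the suppressed $\psi$-labels attach to $c$ and to the various $a_i$ as $\Psi$ is iterated across the $n+1$ factors of $\hat\rho\bigl((a_0+\mu)da_1\cdots da_n\bigr)$, and confirming that the resulting exponent $\psi^{n+1}$ on $c$ together with the subscripts $a_{i\psi}$ match \eqref{eq3.5} exactly. The multiplicativity and derivation checks are routine computations using \eqref{omega} and \eqref{eq3.4} together with the defining properties of a dg-algebra.
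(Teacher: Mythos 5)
Your proposal is correct and follows essentially the same route as the paper: the same explicit formula for $\hat\rho$, and the same three ingredients (the degree-zero square \eqref{3.1cd}, the chain-map property of $\Psi^\bullet$, and the multiplicativity of the entwining) combined to check the commutativity of \eqref{3.8cd}. The only cosmetic difference is that the paper obtains the existence, uniqueness, and dg-algebra-morphism property of $\hat\rho$ by citing the universal property of $(\Omega^\bullet A, d^\bullet)$ from Connes, whereas you propose to verify the multiplicativity and compatibility with the differential by direct computation; both are fine.
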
 
\begin{proof}  From the universal property
of $(\Omega^\bullet A,d^\bullet)$ (see \cite[$\S$ II.1]{C2}), we know that there is a unique morphism  $\hat\rho:(\Omega^\bullet A,d^\bullet)\longrightarrow  (R^\bullet,D^\bullet)$ of dg-algebras such that $\hat\rho |_A=\rho: \Omega^0A=A\longrightarrow R^0$. In particular, $\hat\rho$ is described as follows
\begin{equation}
\hat\rho((a_0+\mu)da_1...da_n)= (\rho(a_0))\cdot (D\rho(a_1)) \cdot ... \cdot (D\rho(a_n)) +\mu  (D\rho(a_1)) \cdot ... \cdot (D
\rho(a_n))
\end{equation}
for $ a_0,...,a_n\in A$ and $n>0$, where the products on the right hand side are taken in $R$. For the sake of convenience, we will suppress the morphism $\rho$ and often write $\rho(a)\in R^0$ simply as $a$ for any $a\in A$. For $c\in C$, we now compute
\begin{equation*}
\begin{array}{ll}
((\hat\rho\otimes id_C)\circ \hat\psi)(c\otimes (a_0+\mu)da_1...da_n) & =(\hat\rho\otimes id_C)((a_{0\psi}da_{1\psi}...da_{n\psi})\otimes c^{\psi^{n+1}}+\mu ( da_{1\psi}...da_{n\psi})\otimes c^{\psi^{n}})\\
&= (a_{0\psi})\cdot ( Da_{1\psi}) \cdot ... \cdot (Da_{n\psi})\otimes c^{\psi^{n+1}}+\mu (Da_{1\psi})\cdot ... \cdot (Da_{n\psi})\otimes c^{\psi^{n}}\\
& \\
(\Psi\circ (id_C\otimes \hat\rho))(c\otimes (a_0+\mu)da_1...da_n) & = \Psi(c\otimes ((a_0)\cdot (Da_1) \cdot ... \cdot (Da_n)) +c\otimes (\mu  (Da_1) \cdot ... \cdot (Da_n)) )\\
& =((a_0)_\Psi\cdot (Da_1)_\Psi \cdot ... \cdot (Da_n)_\Psi)\otimes c^{\Psi^{n+1}}+ (\mu  (Da_1)_\Psi \cdot ... \cdot (Da_n)_\Psi) \otimes c^{\Psi^{n}}\\
& =((a_{0\Psi})\cdot (Da_{1\Psi})  \cdot ... \cdot (Da_{n\Psi}))\otimes c^{\Psi^{n+1}}+ (\mu  (Da_{1\Psi})  \cdot ... \cdot (Da_{n\Psi})) \otimes c^{\Psi^{n}}\\
& =((a_{0\psi})\cdot (Da_{1\psi})  \cdot ... \cdot (Da_{n\psi}))\otimes c^{\psi^{n+1}}+ (\mu  (Da_{1\psi})  \cdot ... \cdot (Da_{n\psi})) \otimes c^{\psi^{n}}\\
\end{array}
\end{equation*} where the replacement  of $\Psi$ by $\psi$ in the last equality follows from the commutativity of \eqref{3.1cd}. We have now shown that the diagram
\eqref{3.8cd} is commutative.
\end{proof}

\section{Entwined traces and classes in cyclic cohomology}

In this section, we will show that cocycles in $(\mathscr C^\bullet_\lambda(A,C,\psi),\delta^\bullet)$  correspond to certain kinds of traces on dg-entwining structures over
$(A,C,\psi)$. We continue to suppress the morphism $\rho: A\longrightarrow R^0$ when working with a dg-entwining structure $((R^\bullet,D^\bullet), C,\Psi^\bullet)$ over 
$(A,C,\psi)$. We begin by introducing the notion of an entwined trace.

\begin{defn}\label{D4.1} Let $((R^\bullet,D^\bullet), C,\Psi^\bullet)$
be a dg-entwining structure. An $n$-dimensional closed graded entwined trace for $((R^\bullet,D^\bullet), C,\Psi^\bullet)$ consists of a linear morphism
\begin{equation}
T: C\otimes R^n\longrightarrow k
\end{equation} satisfying the following conditions

\smallskip
(1) For any $c\in C$ and $r\in R^{n-1}$, we have 
\begin{equation}\label{closed}
T(c\otimes D(r))=0 
\end{equation}

\smallskip
(2) For $r\in R^i$, $r'\in R^j$ with $i+j=n$ and any $c\in C$, we have
\begin{equation}\label{entrace}
T(c\otimes rr')=(-1)^{ij}T(c^\Psi\otimes r' r_\Psi)
\end{equation}

\end{defn}

\begin{defn}\label{D4.2} Let $(A,C,\psi)$ be an entwining structure. An $n$-dimensional entwined cycle over $(A,C,\psi)$ is a tuple
\begin{equation}
((R^\bullet,D^\bullet), C,\Psi^\bullet,T,\rho)
\end{equation} where

\smallskip
(1) $\rho: A\longrightarrow R^0$ is a  morphism of $k$-algebras making $((R^\bullet,D^\bullet), C,\Psi^\bullet)$ a dg-entwining 
structure over $(A,C,\psi)$. 

\smallskip
(2) $T:C\otimes R^n\longrightarrow k$ is an $n$-dimensional  closed graded entwined trace on $((R^\bullet,D^\bullet), C,\Psi^\bullet)$.

\end{defn}

\begin{defn}\label{charc}
Let $(A,C,\psi)$ be an entwining structure and let $((R^\bullet,D^\bullet),C,\Psi^\bullet,T,\rho)$ be an $n$-dimensional entwined cycle over $(A,C,\psi)$. Then, we define the character of the cycle $((R^\bullet,D^\bullet),C,\Psi^\bullet,T,\rho)$ to be
the element $g \in \mathscr C^n(A,C,\psi)$ determined by
\begin{equation*}
g(c \otimes a_1 \otimes \ldots \otimes a_{n+1}):=T\left(c \otimes (\rho(a_1)\cdot D(\rho(a_2))\ldots D(\rho(a_{n+1})))\right)
\end{equation*}
for any $c \otimes a_1 \otimes \ldots \otimes a_{n+1} \in C \otimes A^{n+1}$.
\end{defn}

\begin{thm}\label{P4.3} Let $(A,C,\psi)$ be an entwining structure and let 
$g: C\otimes A^{\otimes n+1}\longrightarrow k$ be a linear morphism. Then, the following are equivalent.

\smallskip
(1) There is an $n$-dimensional entwined cycle $((R^\bullet,D^\bullet), C,\Psi^\bullet,T,\rho)$ over $(A,C,\psi)$ such that
\begin{equation}
g(c,a_0,...,a_{n})=T(c\otimes (\rho(a_0)\cdot D\rho(a_1) \cdot  ... \cdot D\rho(a_n)))\qquad a_i\in A, c\in C
\end{equation}

\smallskip
(2) There exists a  closed graded entwined trace $t:C\otimes \Omega^nA\longrightarrow k$ of dimension $n$ on $((\Omega^\bullet A,d^\bullet),C,\hat\psi)$ such that 
\begin{equation}
g(c,a_0,...,a_{n})=t(c\otimes a_0da_1....da_n)\qquad a_i\in A, c\in C
\end{equation}

\end{thm}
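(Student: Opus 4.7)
The plan is to exploit the universal property of $((\Omega^\bullet A, d^\bullet),C,\hat\psi)$ established in Theorem \ref{T3.4}, by pulling back entwined traces along the induced morphism of dg-entwining structures. The implication $(2) \Rightarrow (1)$ is essentially free: take the entwined cycle to be $((\Omega^\bullet A, d^\bullet), C, \hat\psi, t, id_A)$, noting that $\Omega^0 A = A$ so that the identity on $A$ serves as the required algebra morphism $\rho$; the character formula then reduces to the given expression for $g$ in $(2)$ via the definitions of multiplication and differential in $\Omega A$.

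For the converse $(1) \Rightarrow (2)$, suppose an $n$-dimensional entwined cycle $((R^\bullet, D^\bullet), C, \Psi^\bullet, T, \rho)$ over $(A,C,\psi)$ is given. Applying Theorem \ref{T3.4}, I obtain a morphism $\hat\rho: (\Omega^\bullet A, d^\bullet) \to (R^\bullet, D^\bullet)$ of dg-algebras extending $\rho$ and intertwining $\hat\psi$ with $\Psi$ via the commutative square \eqref{3.8cd}. I would then pull back $T$ along $\hat\rho$ by setting $t(c \otimes \omega) := T(c \otimes \hat\rho(\omega))$ for $\omega \in \Omega^n A$, and show that this $t$ is a closed graded entwined trace on $((\Omega^\bullet A, d^\bullet), C, \hat\psi)$ whose character recovers $g$.

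Verifying closedness is quick: since $\hat\rho$ is a morphism of complexes, $t(c \otimes d\omega) = T(c \otimes D\hat\rho(\omega)) = 0$ by the closedness of $T$. The substantive step is the graded entwined trace property of $t$ with respect to $\hat\psi$. For $\omega \in \Omega^i A$, $\omega' \in \Omega^j A$ with $i + j = n$, the trace property of $T$ together with the multiplicativity of $\hat\rho$ yields
\[
t(c \otimes \omega\omega') \;=\; T(c \otimes \hat\rho(\omega)\hat\rho(\omega')) \;=\; (-1)^{ij}\, T(c^\Psi \otimes \hat\rho(\omega')\,\hat\rho(\omega)_\Psi),
\]
and commutativity of \eqref{3.8cd} supplies the crucial identity $\hat\rho(\omega)_\Psi \otimes c^\Psi = \hat\rho(\omega_{\hat\psi}) \otimes c^{\hat\psi}$, converting this into $(-1)^{ij}\, t(c^{\hat\psi} \otimes \omega'\,\omega_{\hat\psi})$ as required. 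The step I expect to require the most care—though it is not really deep—is exactly this translation between the $\Psi$-action on images of $\hat\rho$ and the $\hat\psi$-action on $\Omega A$. Finally, the character identity $t(c \otimes a_0\,da_1 \cdots da_n) = T(c \otimes \rho(a_0)\,D\rho(a_1) \cdots D\rho(a_n)) = g(c, a_0, \ldots, a_n)$ is immediate from the explicit formula for $\hat\rho$ recorded in the proof of Theorem \ref{T3.4}, so no further work is needed.
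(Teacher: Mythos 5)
Your proposal is correct and follows the same route as the paper: the implication $(2)\Rightarrow(1)$ by taking the cycle $((\Omega^\bullet A,d^\bullet),C,\hat\psi,t,id_A)$, and $(1)\Rightarrow(2)$ by pulling back $T$ along the universal map $\hat\rho$ from Theorem \ref{T3.4} and using the commutativity of \eqref{3.8cd} to transfer the entwined trace identity from $\Psi$ to $\hat\psi$. The paper's verification of closedness is just a spelled-out version of your one-line argument that $\hat\rho$ is a chain map, so there is nothing to add.
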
 

\begin{proof} (1) $\Rightarrow$ (2) : By Theorem \ref{T3.4}, we obtain a morphism $\hat\rho :(\Omega^\bullet A,d^\bullet)\longrightarrow  (R^\bullet,D^\bullet)$ of dg-algebras 
extending $\rho:A\longrightarrow R^0$. We define $t:C\otimes \Omega^nA\longrightarrow k$ by setting
\begin{equation}
t(c\otimes ( (a_0+\mu)da_1...da_n))=T(c\otimes \hat\rho( (a_0+\mu)da_1...da_n))) \qquad a_i\in A, c\in C
\end{equation} In particular, when $\mu=0$, we get
\begin{equation}
t(c\otimes a_0da_1...da_n)=T(c\otimes (\rho(a_0)\cdot D\rho(a_1) \cdot  ... \cdot D\rho(a_n)))=g(c,a_0,...,a_{n})
\end{equation} We have to verify that $t$ satisfies the conditions \eqref{closed} and \eqref{entrace} in Definition \ref{D4.1}. First, we note that
\begin{equation}
\begin{array}{ll}
t(c\otimes d((a_0+\mu)da_1...da_{n-1}))=t(c\otimes da_0...da_{n-1})&=T(c\otimes (D\rho(a_0) \cdot  ... \cdot D\rho(a_{n-1})))\\
&=T(c\otimes D(\rho(a_0) \cdot  D\rho(a_1) \cdot ... \cdot D\rho(a_{n-1})))=0 \\
\end{array}
\end{equation}
This proves the condition \eqref{closed}. Now, for $\alpha\in \Omega^iA$ and $\alpha'\in \Omega^jA$ with $i+j=n$ and for any $c\in C$, we have
\begin{equation*}
\begin{array}{ll}
t(c\otimes \alpha \cdot \alpha') & = T(c\otimes \hat\rho(\alpha \cdot \alpha'))=T(c\otimes \hat\rho(\alpha)\cdot \hat\rho(\alpha'))=(-1)^{ij} T(c^\Psi\otimes  \hat\rho(\alpha')\cdot \hat\rho(\alpha)_\Psi )\\
&=(-1)^{ij} T(c^{\hat\psi}\otimes  \hat\rho(\alpha') \cdot \hat\rho(\alpha_{\hat\psi}))=(-1)^{ij} T(c^{\hat\psi}\otimes  \hat\rho(\alpha'\cdot \alpha_{\hat\psi}) )\\ 
&=(-1)^{ij} t(c^{\hat\psi}\otimes (\alpha'\cdot \alpha_{\hat\psi}))
\end{array}
\end{equation*} 
where the equality $c^\Psi\otimes  \hat\rho(\alpha') \cdot \hat\rho(\alpha)_\Psi=c^{\hat\psi}\otimes  \hat\rho(\alpha') \cdot \hat\rho(\alpha_{\hat\psi})$
follows from the commutativity of the diagram in \eqref{3.8cd}. This shows that $t$ also satisfies the condition \eqref{entrace}.

\smallskip
(2) $\Rightarrow$ (1) :  From Proposition \ref{P3.3}, we already know that  $((\Omega^\bullet A,d^\bullet),C,\hat\psi)$ is   a dg-entwining structure over $(A,C,\psi)$. Since $t:C\otimes \Omega^nA\longrightarrow k$  is a closed graded entwined
trace of dimension $n$, it follows that  $((\Omega^\bullet A,d^\bullet),C,\hat\psi,t,id_A)$ is an $n$-dimensional entwined cycle over $(A,C,\psi)$. We also have
\begin{equation*}
t(c\otimes  a_0\cdot da_1 \cdot ...\cdot da_n)=t(c\otimes a_0da_1...da_n)=g(c,a_0,...,a_{n})
\end{equation*} for $c\in C$ and $a_i\in A$.
\end{proof}

\begin{Thm}\label{T4.4} Let $(A,C,\psi)$ be an entwining structure and let 
$g: C\otimes A^{\otimes n+1}\longrightarrow k$ be a linear morphism. Then, the following are equivalent.

\smallskip
(1) There is an $n$-dimensional entwined cycle $((R^\bullet,D^\bullet), C,\Psi^\bullet,T,\rho)$ over $(A,C,\psi)$ such that
\begin{equation}
g(c,a_0,...,a_{n})=T(c\otimes (\rho(a_0)\cdot D\rho(a_1) \cdot  ... \cdot D\rho(a_n)))\qquad a_i\in A, c\in C
\end{equation}

\smallskip
(2) There exists a  closed graded entwined trace $t:C\otimes \Omega^nA\longrightarrow k$ of dimension $n$ on $((\Omega^\bullet A,d^\bullet),C,\hat\psi)$ such that 
\begin{equation}
g(c,a_0,...,a_{n})=t(c\otimes a_0da_1....da_n)\qquad a_i\in A, c\in C
\end{equation}

\smallskip
(3) $g\in Z_\lambda^n(A,C,\psi)$. 
\end{Thm}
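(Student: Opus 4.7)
My plan is to observe that the equivalence $(1) \Leftrightarrow (2)$ is already supplied by Proposition \ref{P4.3}, so the entire task reduces to showing $(2) \Leftrightarrow (3)$, i.e., that closed graded entwined traces of dimension $n$ on $((\Omega^\bullet A,d^\bullet),C,\hat\psi)$ correspond bijectively to cyclic cocycles $g \in Z^n_\lambda(A,C,\psi)$ via the formula $g(c,a_0,\ldots,a_n) = t(c \otimes a_0\, da_1 \cdots da_n)$.

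For $(2) \Rightarrow (3)$, I would verify both the cyclic condition \eqref{lambda1} and the Hochschild cocycle condition on the character $g$. The cyclic condition follows by a two-step computation: first apply the entwined trace property \eqref{entrace} with $\alpha = a_0$ of degree $0$ and $\beta = da_1 \cdots da_n$ of degree $n$ to get $t(c \otimes a_0 \cdot da_1 \cdots da_n) = t(c^{\hat\psi} \otimes da_1 \cdots da_n \cdot a_{0\hat\psi})$; then use the identity $d(a_1\, da_2 \cdots da_n \cdot a_{0\hat\psi}) = da_1 \cdots da_n \cdot a_{0\hat\psi} + (-1)^{n-1} a_1\, da_2 \cdots da_n \cdot da_{0\hat\psi}$, obtained from the graded Leibniz rule and $d^2 = 0$, together with closedness of $t$ to conclude that the right-hand side equals $(-1)^n g(c^\psi, a_1, \ldots, a_n, a_{0\psi})$, where the replacement of $\hat\psi$ by $\psi$ uses \eqref{eq3.5} restricted to $C\otimes A$. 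The Hochschild cocycle condition $\delta^n g = 0$ is verified by expanding $\delta^n g$ via \eqref{hochg} and the character formula: each term $g(c, a_0, \ldots, a_i a_{i+1}, \ldots, a_{n+1})$ unfolds via $d(a_i a_{i+1}) = da_i \cdot a_{i+1} + a_i \cdot da_{i+1}$ into two pieces, and after collecting, everything either telescopes to an exact form (annihilated by closedness) or combines pairwise via the graded entwined trace property.

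For $(3) \Rightarrow (2)$, given $g \in Z^n_\lambda(A,C,\psi)$, I would use the decomposition $\Omega^n A = \tilde A \otimes A^{\otimes n} = A^{\otimes(n+1)} \oplus k \otimes A^{\otimes n}$ to define
\[
t(c \otimes a_0\, da_1 \cdots da_n) := g(c, a_0, a_1, \ldots, a_n), \qquad t(c \otimes da_1 \cdots da_n) := 0.
\]
Closedness comes essentially for free: for any $\omega = (a_0+\mu)\, da_1\cdots da_{n-1} \in \Omega^{n-1}A$, formula \eqref{eq3.4} gives $d\omega = da_0\, da_1 \cdots da_{n-1}$, which lies entirely in the $k \otimes A^{\otimes n}$ summand where $t$ is zero by definition. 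The graded entwined trace property is then established by reducing to generators: since $\Omega A$ is generated as a $k$-algebra by elements of $A$ and elements of the form $da$ for $a \in A$, and since the multiplication formula \eqref{omega} expresses an arbitrary product $\alpha\beta$ in terms of such generators, the identity $t(c \otimes \alpha\beta) = (-1)^{ij} t(c^{\hat\psi} \otimes \beta \alpha_{\hat\psi})$ reduces to evaluations of $g$ on suitably shifted arguments, which then follows from a combined use of the cyclic condition \eqref{lambda1} and the Hochschild cocycle relation $\delta^n g = 0$.

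The principal obstacle I anticipate is this final verification of the graded entwined trace property in $(3) \Rightarrow (2)$. The multiplication rule \eqref{omega} for $\Omega A$ produces a substantial number of terms of the form $d(a_i a_{i+1})$, plus an extra ``unital'' contribution coming from the summand $k \subset \tilde A$; each of these must be traced through the entwining $\hat\psi$, which behaves differently on the two summands of $\tilde A \otimes A^{\otimes n}$ according to \eqref{eq3.5} (producing $c^{\psi^{n+1}}$ versus $c^{\psi^n}$) and reconciled with the numerous signs $(-1)^{ij}$ arising from the trace property. The delicate bookkeeping between these three ingredients --- the Leibniz expansion of products of generators, the prescribed vanishing $t(c \otimes da_1 \cdots da_n) = 0$ on the $k$-summand, and the iterated action of $\psi$ --- is precisely where the cyclic cocycle hypothesis on $g$ is deployed, and consequently where the bulk of the technical work of the proof will lie.
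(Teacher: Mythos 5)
Your proposal is correct and follows essentially the same route as the paper: reduce to $(2)\Leftrightarrow(3)$ via Proposition \ref{P4.3}, define $t$ by $t(c\otimes(a_0+\mu)da_1\cdots da_n)=g(c,a_0,\ldots,a_n)$ so that closedness is immediate, and verify the graded entwined trace property by expanding products via \eqref{omega} against the cyclic and cocycle conditions on $g$. The only cosmetic difference is in the cyclicity check for $(2)\Rightarrow(3)$, where you rotate the degree-$0$ element $a_0$ to the end and then integrate by parts, whereas the paper rotates the degree-$1$ element $dp_1$ instead; both are the same two-step use of \eqref{entrace} and \eqref{closed}.
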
 

\begin{proof} From Proposition \ref{P4.3}, we already know that (1) and (2) are equivalent. 

\smallskip
(3) $\Rightarrow$ (2) : We know that $g\in \mathscr C_\lambda^n(A,C,\psi)$.  We define $t:C\otimes \Omega^nA\longrightarrow k$ by setting
\begin{equation}\label{e4.12}
t(c\otimes (a_0+\mu)da_1....da_n):=g(c,a_0,a_1,...,a_n)
\end{equation} for $c\in C$, $a_i\in A$ and $\mu\in k$. We note that
\begin{equation}\label{eq4.13}
t(c\otimes d((a_0+\mu)da_1....da_{n-1}))=t(c\otimes da_0...da_{n-1})=g(c,0,a_0,...,a_{n-1})=0
\end{equation} We now consider elements $(p_0+\mu)dp_1...dp_i\in \Omega^iA$ and $(q_0+\nu)dq_1...dq_j\in \Omega^jA$ with $i+j=n$. Using the expression for the product
on $\Omega A$ given in \eqref{omega}, we obtain
\begin{equation}\label{e4.14}
\begin{array}{l}
t(c\otimes ((p_0+\mu)dp_1...dp_i)\cdot ((q_0+\nu)dq_1...dq_j))\\
=g(c,p_0,p_1,...,p_{i-1},p_iq_0,q_1,...,q_j)+\underset{l=1}{\overset{i-1}{\sum}}(-1)^{i-l}g(c,p_0,p_1,...,p_lp_{l+1},...p_i,q_0,q_1,...,q_j)\\
\quad +(-1)^i g(c,p_0p_1,p_2,...,p_i,q_0,...,q_j) +(-1)^i\mu g(c,p_1,p_2,...,p_i,q_0,...,q_j) +\nu g(c,p_0,p_1,...,p_i,q_1,...,q_j)\\
\end{array}
\end{equation} On the other hand, we have
\begin{equation}\label{e4.15}
\begin{array}{l}
t(c^{\hat\psi}\otimes ((q_0+\nu)dq_1...dq_j)\cdot ((p_0+\mu)dp_1...dp_i)_{\hat\psi})\\=t(c^{\psi^{i+1}}\otimes (q_0dq_1...dq_j)\cdot (p_{0\psi}dp_{1\psi}...dp_{i\psi}))
+\nu t(c^{\psi^{i+1}}\otimes (dq_1...dq_j)\cdot (p_{0\psi}dp_{1\psi}...dp_{i\psi}))\\ \quad +\mu t(c^{\psi^{i}}\otimes  ((q_0+\nu)dq_1...dq_j)\cdot (dp_{1\psi}...dp_{i\psi}))\\
\end{array}
\end{equation} We need to verify that 
\begin{equation}\label{3>2} t(c\otimes ((p_0+\mu)dp_1...dp_i)\cdot ((q_0+\nu)dq_1...dq_j))=(-1)^{ij}t(c^{\hat\psi}\otimes ((q_0+\nu)dq_1...dq_j)\cdot ((p_0+\mu)dp_1...dp_i)_{\hat\psi})
\end{equation} For this, we compare one by one the terms in \eqref{e4.14} and \eqref{e4.15} using the relations \eqref{e4.12}, \eqref{eq4.13}, the product on $\Omega A$
described in \eqref{omega} and the property of $g\in \mathscr C_\lambda^n(A,C,\psi)$ from \eqref{lambda1}. First, we note that
\begin{equation}\label{e4.17}
\begin{array}{ll}
\mu t(c^{\psi^{i}}\otimes  ((q_0+\nu)dq_1...dq_j)\cdot (dp_{1\psi}...dp_{i\psi}))&=\mu g(c^{\psi^{i}},q_0,q_1,...,q_j,p_{1\psi},...,p_{i\psi})\\
&=(-1)^{ij}(-1)^i\mu g(c,p_1,p_2,...,p_i,q_0,...,q_j)\\
\end{array}
\end{equation} Next, we have
\begin{equation}\label{e4.18}
\begin{array}{ll}
\nu t(c^{\psi^{i+1}}\otimes (dq_1...dq_j)\cdot (p_{0\psi}dp_{1\psi}...dp_{i\psi})) & = (-1)^j\nu g(c^{\psi^{i+1}},q_1,...,q_j,p_{0\psi},...,p_{i\psi})\\
&= (-1)^{ij} \nu g(c,p_0,p_1,...,p_i,q_1,...,q_j)\\
\end{array}
\end{equation} We also have
\begin{equation}\label{e4.19}
\begin{array}{l}
t(c^{\psi^{i+1}}\otimes (q_0dq_1...dq_j)\cdot (p_{0\psi}dp_{1\psi}...dp_{i\psi}))\\
=g(c^{\psi^{i+1}},q_0,...,q_{j-1},q_jp_{0\psi},p_{1\psi},...,p_{i\psi}) + \underset{l=1}{\overset{j-1}{\sum}}(-1)^{j-l}g(c^{\psi^{i+1}},q_0,q_1,...,q_lq_{l+1},...q_j,p_{0\psi},...,p_{i\psi})\\
\quad + (-1)^jg(c^{\psi^{i+1}},q_0q_1,q_2,...,q_j,p_{0\psi},...,p_{i\psi})\\
=(-1)^{ni}g(c^{\psi},p_1,...,p_i,q_0,...,q_{j-1},q_jp_{0\psi}) + \underset{l=1}{\overset{j-1}{\sum}}(-1)^{j-l}(-1)^{n(i+1)}g(c,p_0,p_1,...,p_i,q_0,q_1,...,q_lq_{l+1},...q_j)\\
\quad + (-1)^{j+n(i+1)}g(c,p_0,...,p_i,q_0q_1,q_2,...,q_j)\\
\end{array}
\end{equation} The result of \eqref{3>2} now follows from the fact that $g\in  Z_\lambda^n(A,C,\psi)$ satisfies $\delta(g)=0$, where $\delta$ is the Hochschild differential as described in \eqref{hochg}. 

\smallskip
(2) $\Rightarrow$ (3) : We have  an $n$-dimensional closed graded entwined trace $t:C\otimes \Omega^nA\longrightarrow k$ such that
\begin{equation}
g(c,a_0,...,a_{n})=t(c\otimes a_0da_1....da_n)\qquad a_i\in A, c\in C
\end{equation} Since $t$ is a closed graded entwined trace, we note that  $t(c\otimes da_1...da_n)=
t(c\otimes d(a_1da_2...da_{n}))=0$. Hence, 
\begin{equation}
g(c,a_0,...,a_{n})=t(c\otimes (a_0+\mu)da_1....da_n)\qquad a_i\in A, c\in C, \mu \in k
\end{equation}
To show that $g\in Z_\lambda^n(A,C,\psi)$, we have to verify that
\begin{equation}
\delta(g)(c,p_1,...,p_{n+2})=0 \qquad g(c,p_1,...,p_{n+1})=(-1)^ng(c^\psi,p_2,...,p_{n+1},p_{1\psi})
\end{equation} for any $p_i\in A$ and $c\in C$. Here, $\delta$ denotes the Hochschild differential as described in \eqref{hochg}. We now have
\begin{equation}\label{e4.22}
\begin{array}{ll}
g(c,p_1,...,p_{n+1})-(-1)^ng(c^\psi,p_2,...,p_{n+1},p_{1\psi})&= t(c\otimes p_1dp_2...dp_{n+1})-(-1)^nt(c^\psi\otimes p_2dp_3...dp_{n+1}dp_{1\psi} )\\
&= t(c\otimes p_1dp_2...dp_{n+1})+t(c\otimes (dp_1)(p_2dp_3...dp_{n+1}))\\
&= t(c\otimes d(p_1p_2)dp_3...dp_{n+1})=0\\
\end{array}
\end{equation}
Applying \eqref{e4.22}, we now  see that
\begin{equation}\label{e4.23}
t(c^{\psi^{n+1}}\otimes (p_{n+2})\cdot (p_{1\psi}dp_{2\psi}...dp_{(n+1)\psi}))=(-1)^{n}g(c^\psi,p_2,...,p_{n+1},p_{n+2}p_{1\psi})
\end{equation} Applying \eqref{e4.22}, we can  reverse the arguments in \eqref{e4.14} to see that
\begin{equation}\label{e4.24}
\begin{array}{ll}
t(c\otimes (p_1dp_2...dp_{n+1})(p_{n+2}))&=g(c,p_1,...,p_{n+1}p_{n+2})+\underset{l=1}{\overset{n-1}{\sum}}(-1)^{n-l}g(c,p_1,p_2,...,p_{l+1}p_{l+2},...,p_{n+2})\\
&\quad +(-1)^n g(c,p_1p_2,...,p_{n+1},p_{n+2})\\
\end{array}
\end{equation} From \eqref{e4.23} and \eqref{e4.24} and the fact that $t(c\otimes (p_1dp_2...dp_{n+1})(p_{n+2}))-t(c^{\psi^{n+1}}\otimes (p_{n+2})\cdot (p_{1\psi}dp_{2\psi}...dp_{(n+1)\psi}))=0$, it now follows that $\delta(g)=0$. 
\end{proof}

\section{Morita invariance of Hochschild cohomology}

Let $(A,C,\psi)$ be an entwining structure. We construct a presimplicial module $\mathscr C_\bullet=\mathscr C_\bullet(A,C,\psi)$ as follows:
\begin{equation}\label{presim}
\begin{array}{c}
\{\mathscr C_n(A,C,\psi)=C\otimes A^{\otimes n+1}\}_{n\geq 0}\qquad \{d_i:\mathscr C_n\longrightarrow \mathscr C_{n-1}\}_{0\leq i\leq n}\\ \\
d_i(c,a_1,...,a_{n+1}):=\left\{
\begin{array}{ll}
(c^\psi,a_2,...,a_{n+1}a_{1\psi}) & \mbox{if $i=0$} \\
(c,a_1,...,a_ia_{i+1},...,a_{n+1}) & \mbox{ if $0<i\leq n$} \\
\end{array}\right.
\end{array}
\end{equation}

\begin{lem}\label{L5.1} The collection  $\mathscr C_\bullet=\mathscr C_\bullet(A,C,\psi)$ along with the maps in \eqref{presim} forms a presimplicial module.

\end{lem}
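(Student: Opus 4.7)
The plan is to verify the presimplicial identities $d_i \circ d_j = d_{j-1} \circ d_i$ for all $0 \leq i < j \leq n$. I would split the verification into three cases according to whether the face map $d_0$ is involved, since $d_0$ is the only one that invokes the entwining $\psi$ on the coalgebra factor.

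For $0 < i < j \leq n$, both face maps act purely by internal multiplication in $A$ and leave $c$ unchanged. The identity then reduces to associativity of the product in $A$: both compositions contract the pairs at positions $i, i+1$ and $j, j+1$, and the index shift $j \mapsto j-1$ precisely accounts for the tuple shrinking by one entry after the first contraction. This case is routine and requires no use of the entwining conditions \eqref{ent}.

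For $i = 0$ and $2 \leq j \leq n$, the $\psi$-twisting of the first coordinate and the multiplication at positions $j, j+1$ affect essentially disjoint parts of the tuple, and both $d_0 d_j$ and $d_{j-1} d_0$ produce $(c^\psi, a_2, \ldots, a_j a_{j+1}, \ldots, a_{n+1} a_{1\psi})$. The one mildly subtle subcase is $j = n$, where the multiplication $a_n a_{n+1}$ and the trailing entwined factor $a_{1\psi}$ meet at the last slot; associativity in $A$ takes care of this.

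The only case that genuinely uses an entwining axiom is $i = 0$, $j = 1$. Here I would compute directly
\begin{equation*}
d_0 d_1(c, a_1, \ldots, a_{n+1}) = (c^\psi, a_3, \ldots, a_{n+1}(a_1 a_2)_\psi), \qquad d_0 d_0(c, a_1, \ldots, a_{n+1}) = (c^{\psi\psi}, a_3, \ldots, a_{n+1} a_{1\psi} a_{2\psi}),
\end{equation*}
and observe that equality of these two tuples is exactly the first entwining identity in \eqref{ent}, namely $(a_1 a_2)_\psi \otimes c^\psi = a_{1\psi} a_{2\psi} \otimes c^{\psi\psi}$, applied so as to compare the trailing algebra factor together with the coalgebra entry. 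This is the only step where the compatibility between $\psi$ and multiplication is truly essential, and I anticipate no serious obstacle beyond careful bookkeeping in the boundary subcase of Case 2.
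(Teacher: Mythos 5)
Your proposal is correct and follows essentially the same route as the paper: the cases with $j>1$ reduce to associativity in $A$ (the paper simply declares them obvious), and the only substantive case $i=0$, $j=1$ is handled by the identical computation, invoking the entwining axiom $(a_1a_2)_\psi\otimes c^\psi=a_{1\psi}a_{2\psi}\otimes c^{\psi\psi}$. Your extra attention to the boundary subcase $j=n$ is sound bookkeeping but adds nothing beyond what the paper treats as routine.
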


\begin{proof} We need to verify (see, for instance, \cite[$\S$ 1.0.6]{Lod2}) that $d_id_j=d_{j-1}d_i$ for $0\leq i<j\leq n$. This is obvious for $j>1$. Since $j>i\geq 0$, the only remaining case is that of $i=0$ and $j=1$. In that case, we have
\begin{equation}
\begin{array}{ll}
d_0d_1(c,a_1,...,a_{n+1})& = d_0(c,a_1a_2,...,a_{n+1})\\
&= (c^\psi,a_3,...,a_{n+1}(a_1a_2)_\psi)\\&=(c^{\psi\psi},a_3,...,a_{n+1}a_{1\psi}a_{2\psi})\\
&=d_0d_0(c,a_1,...,a_{n+1})\\
\end{array}
\end{equation}
\end{proof}

We continue to denote by $\mathscr C_\bullet(A,C,\psi)$ the complex corresponding to this presimplicial module, equipped with standard differential $\sum_{i=0}^n(-1)^id_i$. The homology groups of this complex will be denoted by $HH_\bullet(A,C,\psi)$. From \eqref{hochg}, it is evident that 
\begin{equation}\label{mmr}
\mathscr C^\bullet(A,C,\psi) =Hom(\mathscr C_\bullet(A,C,\psi),k)
\end{equation} We will now show that the complexes $\mathscr C^\bullet(A,C,\psi)$ and $\mathscr C^\bullet(M_r(A),C,\psi)$ are quasi-isomorphic for any $r\geq 1$, where $M_r(A)$ is the ring of
$(r\times r)$-matrices with entries in $A$. 

\smallskip
First, we extend the entwining $\psi : C\otimes A\longrightarrow A\otimes C$ to a map (still denoted by $\psi$)
\begin{equation}
\psi : C\otimes M_2(A)\longrightarrow M_2(A)\otimes C\qquad \psi(c\otimes (a\otimes E_{ij}(1)))=(a_\psi\otimes E_{ij}(1))\otimes c^\psi \qquad a\in A,c \in C
\end{equation} where $\{E_{ij}(1)\}_{1\leq i,j\leq r}$ is the $(r\times r)$-matrix whose $(i,j)$-th entry is $1$ and all others are $0$. It is immediate that $(M_r(A),C,\psi)$
is an entwining structure.

\smallskip
For any $1\leq p\leq r$, we have a (not necessarily unital) inclusion of rings
\begin{equation}
inc_p:A\longrightarrow M_r(A) \qquad a\mapsto E_{pp}(a)
\end{equation} inducing a morphism of complexes $inc_{p\bullet}:\mathscr C_\bullet(A,C,\psi)\longrightarrow \mathscr C_\bullet(M_r(A),C,\psi)$. On the other hand, consider the generalized trace map (see, for instance, \cite[$\S$ 1.2.1]{Lod2}) 
\begin{equation}\label{gtr1}
tr:M_r(A)^{\otimes n+1}\longrightarrow A^{\otimes n+1} \qquad tr(X^1,...,X^{n+1})=\sum (X^1_{i_1i_2})\otimes( X^2_{i_2i_3})\otimes ...\otimes (X^{n+1}_{i_{n+1}i_1})
\end{equation} where the sum is taken over all possible tuples $(i_1,...,i_{n+1})$. Writing $M_r(A)=A\otimes M_r(k)$, the generalized trace can be expressed as (see, for instance,
\cite[$\S$ 1.2.2]{Lod2})
\begin{equation}\label{gtr2}
tr(a_1u_1\otimes ... \otimes a_{n+1}u_{n+1})=tr(u_1... u_{n+1})(a_1\otimes ... \otimes a_{n+1})
\end{equation} where $a_i\in A$ and $u_i\in M_r(k)$. The generalized trace can be extended to a map (still denoted by $tr$) as follows
\begin{equation}\label{gtr3}
tr:C\otimes M_r(A)^{\otimes n+1}\longrightarrow C\otimes A^{\otimes n+1} \quad tr(c\otimes a_1u_1\otimes ... \otimes a_{n+1}u_{n+1})=tr(u_1... u_{n+1})(c\otimes a_1\otimes ... \otimes a_{n+1})
\end{equation}

\begin{lem}\label{L5.2} The generalized trace induces a morphism of complexes $tr_\bullet : \mathscr C
_\bullet(M_r(A),C,\psi)\longrightarrow \mathscr C_\bullet(A,C,\psi)$. 

\end{lem}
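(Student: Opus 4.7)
The plan is to check that the generalized trace commutes with each face map $d_i$ for $0 \leq i \leq n$, which immediately gives compatibility with the signed sum $\sum_{i=0}^n(-1)^i d_i$. Using the identification $M_r(A) = A \otimes_k M_r(k)$ and the formula \eqref{gtr3}, a typical generator of $C \otimes M_r(A)^{\otimes n+1}$ may be written as $c \otimes a_1 u_1 \otimes \cdots \otimes a_{n+1} u_{n+1}$ with $a_i \in A$ and $u_i \in M_r(k)$, and then $tr$ extracts the scalar $tr(u_1 \cdots u_{n+1}) \in k$ as a multiplier of $c \otimes a_1 \otimes \cdots \otimes a_{n+1}$.

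For $1 \leq i \leq n$, the verification is immediate from $(a_i u_i)(a_{i+1} u_{i+1}) = (a_i a_{i+1})(u_i u_{i+1})$: applying $d_i^{M_r(A)}$ inside and then $tr$ produces $tr(u_1 \cdots u_{i-1} (u_i u_{i+1}) u_{i+2} \cdots u_{n+1}) = tr(u_1 \cdots u_{n+1})$ times $c \otimes a_1 \otimes \cdots \otimes a_i a_{i+1} \otimes \cdots \otimes a_{n+1}$, which is exactly $d_i^A$ applied to $tr$ of the original element. No twist by $\psi$ enters here, so this part is essentially the classical calculation.

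The one case requiring genuine care is $i = 0$, where the entwining interacts with the cyclic shift. By the explicit extension of $\psi$ to $M_r(A)$, we have $(a u)_\psi \otimes c^\psi = a_\psi u \otimes c^\psi$; that is, $\psi$ leaves the $M_r(k)$-factor untouched. Consequently
\begin{equation*}
d_0^{M_r(A)}(c \otimes a_1 u_1 \otimes \cdots \otimes a_{n+1} u_{n+1}) = c^\psi \otimes a_2 u_2 \otimes \cdots \otimes a_n u_n \otimes (a_{n+1} a_{1\psi})(u_{n+1} u_1),
\end{equation*}
whose image under $tr$ equals $tr(u_2 \cdots u_n u_{n+1} u_1) \cdot (c^\psi \otimes a_2 \otimes \cdots \otimes a_n \otimes a_{n+1} a_{1\psi})$. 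Cyclicity of the ordinary matrix trace gives $tr(u_2 \cdots u_{n+1} u_1) = tr(u_1 u_2 \cdots u_{n+1})$, which is the same scalar produced by first applying $tr$ and then $d_0^A$. The only subtle input beyond the classical Morita argument for Hochschild homology of an algebra is this observation that $\psi$ was defined so as to be trivial on the matrix coordinates, which is exactly what allows the cyclic-trace identity to survive in the presence of the twist.
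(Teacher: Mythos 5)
Your proof is correct and follows essentially the same route as the paper: reduce to commutation with each face map $d_i$, observe that the case $i\geq 1$ is the classical computation, and for $i=0$ combine the fact that the extended $\psi$ acts trivially on the $M_r(k)$-factor with the cyclicity identity $tr(u_2\cdots u_{n+1}u_1)=tr(u_1\cdots u_{n+1})$. The paper's own proof is exactly this, only stated more tersely.
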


\begin{proof} It suffces to show that the generalized trace commutes with the face maps $d_i$ of the presimplicial modules. From \eqref{gtr3}, this is obvious for $i>0$. For $i=0$, we have
\begin{equation}
\begin{array}{ll}
d_0\circ tr(c, a_1u_1, ..., a_{n+1}u_{n+1})&=tr(u_1...u_{n+1})(c^\psi, a_2,..., a_{n+1}a_{1\psi})\\
&=tr(u_2...u_{n+1}u_1)(c^\psi, a_2,..., a_{n+1}a_{1\psi}) \\
&=tr\circ d_0(c, a_1u_1, ..., a_{n+1}u_{n+1})\\
\end{array}
\end{equation} where $a_i\in A$ and $u_i\in M_r(k)$. 
\end{proof}

\begin{thm}
The maps $inc_{1\bullet}:\mathscr{C}_\bullet(A,C,\psi) \longrightarrow \mathscr{C}_\bullet(M_r(A),C,\psi)$ and $tr_\bullet:\mathscr{C}_\bullet(M_r(A),C,\psi) \longrightarrow \mathscr{C}_\bullet(A,C,\psi)$ are homotopy inverses to each other.
\end{thm}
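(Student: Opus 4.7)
The plan is to adapt the classical Morita invariance argument for Hochschild homology \cite[$\S$ 1.2]{Lod2} to the present entwining setting, exploiting the fact that the extension of $\psi$ to $M_r(A)$ leaves the matrix part untouched.

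First I would verify $tr_\bullet \circ inc_{1\bullet} = id$ directly. From \eqref{gtr3}, applying $tr_\bullet$ to $inc_{1\bullet}(c, a_1, \ldots, a_{n+1}) = (c, a_1 E_{11}(1), \ldots, a_{n+1} E_{11}(1))$ and using $E_{11}(1)^{n+1} = E_{11}(1)$ together with $tr(E_{11}(1)) = 1$ recovers $(c, a_1, \ldots, a_{n+1})$.

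For the opposite composition, I would construct an explicit chain homotopy $H_\bullet : \mathscr C_\bullet(M_r(A), C, \psi) \to \mathscr C_{\bullet+1}(M_r(A), C, \psi)$ satisfying $\partial H + H \partial = id - inc_{1\bullet} \circ tr_\bullet$, where $\partial = \sum_i (-1)^i d_i$. The key structural observation is that under the natural identification $M_r(A)^{\otimes n+1} \cong A^{\otimes n+1} \otimes M_r(k)^{\otimes n+1}$, the face maps of $\mathscr C_\bullet(M_r(A), C, \psi)$ act diagonally on the two tensor factors: for $0 < i \le n$, $d_i$ multiplies adjacent entries on both sides; for $d_0$, the $A$-side is twisted by $\psi$ while the $M_r(k)$-side undergoes mere cyclic rotation. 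This follows from the explicit form of $\psi$ on $M_r(A)$, namely $\psi(c \otimes (a \otimes E_{ij}(1))) = (a_\psi \otimes E_{ij}(1)) \otimes c^\psi$, which leaves matrix units invariant. Similarly, $inc_{1\bullet}$ and $tr_\bullet$ act only on the $M_r(k)$-factor, via the classical inclusion $k \to M_r(k)$ and the ordinary matrix trace.

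I would then take $H_\bullet$ to be the tensor product of the identity on the $A^{\otimes \bullet+1}$-factor with Loday's classical chain homotopy on the Hochschild complex of $M_r(k)$ that implements $id \simeq inc_{11} \circ tr$ (see \cite[Thm~1.2.4]{Lod2}). Compatibility of $H_\bullet$ with $d_i$ for $i > 0$ is immediate from Loday's verification in the untwisted case. The main obstacle, which I expect to be routine but requires careful bookkeeping, is compatibility with the twisted $d_0$: since the classical homotopy manipulates only matrix indices, and never touches the $A$-coefficients or the coalgebra argument $c$, it commutes with the $\psi$-twist, and the chain homotopy identity then reduces to its classical counterpart on $\mathscr C_\bullet(M_r(k))$, which is standard.
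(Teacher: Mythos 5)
Your proposal is correct and follows essentially the same route as the paper: the homotopy the paper writes down is exactly Loday's classical presimplicial homotopy for matrices transported to the entwined complex, and the compatibility with the twisted face map $d_0$ rests, just as you say, on the fact that the extended entwining fixes matrix units, $\psi(c\otimes 1_AE_{pq}(1))=1_AE_{pq}(1)\otimes c$. The one imprecision is that $H_\bullet$ cannot literally be $(\mathrm{id}$ on $A^{\otimes \bullet+1})\otimes(\text{Loday's homotopy on } M_r(k)^{\otimes \bullet+1})$, since the homotopy raises the number of tensor factors by one on each side; the $A$-side must gain an inserted $1_A$ (paired with the inserted matrix unit, giving the factor $1_AE_{1q}(1)$ appearing in the paper's $h_i$), and it is precisely this inserted element that must pass trivially through $\psi$ in the $d_0$-relations --- which it does, by the unit axiom of the entwining.
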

\begin{proof}
We have
\begin{align*}
(tr_\bullet \circ inc_{1\bullet})(c \otimes a_1 \otimes \ldots \otimes a_{n+1})=tr_\bullet(c \otimes a_1E_{11}(1) \otimes \ldots \otimes a_{n+1}E_{11}(1))=c \otimes a_1 \otimes \ldots \otimes a_{n+1}
\end{align*}
which shows that $tr_\bullet \circ inc_{1\bullet}=id$. Therefore, it remains to show that $inc_{1\bullet} \circ tr_\bullet \sim id$.

\smallskip
For each $n \geq 0$, we define $k$-linear maps $\{h_i:\mathscr{C}_n(M_r(A),C,\psi) \longrightarrow \mathscr{C}_{n+1}(M_r(A),C,\psi)\}_{0 \leq i \leq n}$ given by
\begin{equation*}
\begin{array}{ll}
h_i(c \otimes a_1u_1 \otimes \ldots \otimes a_{n+1}u_{n+1}):=& c \otimes \sum\limits_{1 \leq k,l,\ldots,p,q,s\leq r}  a_1E_{11}({u_1}_{kl}) \otimes a_2E_{11}({u_2}_{lm})\otimes \ldots \otimes a_iE_{11}({u_i}_{pq})\\ 
& \quad \quad \quad \quad \quad \quad \quad  \otimes 1_AE_{1q}(1) \otimes a_{i+1}u_{i+1} \otimes  \ldots \otimes a_{n+1} E_{s1}({u_{n+1}}_{sk})
 \end{array}
\end{equation*}
 for $1 \leq i \leq n$ and
\begin{equation*}
\begin{array}{ll}
h_0(c \otimes a_1u_1 \otimes \ldots \otimes a_{n+1}u_{n+1}):=c \otimes \sum\limits_{1 \leq k,s \leq r} 1_AE_{1k}(1) \otimes a_1u_1 \otimes ...\otimes a_nu_n\otimes  a_{n+1} E_{s1}({u_{n+1}}_{sk})
\end{array}
\end{equation*}
We will now prove that $h=\sum_{i=0}^n (-1)^i h_i$ is a homotopy between $inc_{1\bullet} \circ tr_\bullet$ and the identity. To do so, we check the following relations (see \cite[$\S$ 1.0.8]{Lod2}):
\begin{equation}\label{relations}
\begin{array}{lll}
d_i h_{j}=h_{j-1}d_i & \text{for}~ i<j\\
d_ih_i=d_ih_{i-1} & \text{for}~ 0< i \leq n\\
d_ih_{j}= h_{j}d_{i-1} & \text{for}~ i>j+1\\
d_0h_0= id& \text{and} \quad d_{n+1}h_n=inc_{1\bullet} \circ tr_\bullet
\end{array}
\end{equation}
We have
\begin{align*}
d_0h_0(c \otimes a_1u_1 \otimes \ldots \otimes a_{n+1}u_{n+1})&=d_0\left( c \otimes \sum\limits_{1 \leq k,s \leq r} 1_AE_{1k}(1) \otimes a_1u_1 \otimes ...\otimes a_nu_n\otimes  a_{n+1} E_{s1}({u_{n+1}}_{sk}) \right)\\
&=c^\psi \otimes a_1u_1 \otimes \ldots \otimes a_nu_n \otimes \sum\limits_{1 \leq k,s \leq r}a_{n+1} E_{s1}({u_{n+1}}_{sk}) \left(1_AE_{1k}(1)\right)_\psi\\
&=c \otimes a_1u_1 \otimes \ldots \otimes a_nu_n \otimes \sum\limits_{1 \leq k,s \leq r}a_{n+1} E_{s1}({u_{n+1}}_{sk})E_{1k}(1)\\
&= c \otimes a_1u_1 \otimes \ldots \otimes a_{n+1}u_{n+1}
\end{align*}
The third equality follows from the fact that $\psi(c \otimes 1_AE_{pq}(1))=1_AE_{pq}(1) \otimes c$ for all $1 \leq p,q \leq r$. The fourth equality follows from the fact that $\sum\limits_{1 \leq k,s \leq r}E_{s1}({u_{n+1}}_{sk})E_{1k}(1)=u_{n+1}$. 

\smallskip
Further, using the fact that $E_{1q}(1) E_{s1}({u_{n+1}}_{sk})=0$ unless $q=s$, we have
\begin{align*}
&d_{n+1}h_n(c \otimes a_1u_1 \otimes \ldots \otimes a_{n+1}u_{n+1})\\
&=d_{n+1}\Big(c \otimes \sum\limits_{1 \leq k,l,\ldots,p,q,s\leq r}  a_1E_{11}({u_1}_{kl}) \otimes a_2E_{11}({u_2}_{lm})\otimes \ldots \otimes a_nE_{11}({u_n}_{pq}) \otimes 1_AE_{1q}(1) \otimes  a_{n+1} E_{s1}({u_{n+1}}_{sk})\Big)\\
&=c \otimes \sum\limits_{1 \leq k,l,\ldots,p,q,s\leq r}  a_1E_{11}({u_1}_{kl}) \otimes a_2E_{11}({u_2}_{lm})\otimes \ldots \otimes a_nE_{11}({u_n}_{pq}) \otimes a_{n+1}E_{1q}(1) E_{s1}({u_{n+1}}_{sk})\\
&=c \otimes \sum\limits_{1 \leq k,l,\ldots,p,q\leq r}  a_1E_{11}({u_1}_{kl}) \otimes a_2E_{11}({u_2}_{lm})\otimes \ldots \otimes a_nE_{11}({u_n}_{pq})\otimes a_{n+1}E_{1q}(1) E_{q1}({u_{n+1}}_{qk})\\
&=c \otimes \sum\limits_{1 \leq k,l,\ldots,p,q\leq r}  a_1E_{11}({u_1}_{kl}) \otimes a_2E_{11}({u_2}_{lm})\otimes \ldots \otimes a_nE_{11}({u_n}_{pq}) \otimes a_{n+1}E_{11}({u_{n+1}}_{qk})\\
&=(c \otimes a_1E_{11}(1) \otimes \ldots \otimes a_{n+1}E_{11}(1)) \sum\limits_{1 \leq k,l,\ldots,p,q\leq r} \left( {u_1}_{kl}{u_2}_{lm} \ldots {u_{n+1}}_{qk}\right)\\
&=(c \otimes a_1E_{11}(1) \otimes \ldots \otimes a_{n+1}E_{11}(1)) \sum\limits_{1 \leq k \leq r} ( {u_1}{u_2}\ldots {u_{n+1}})_{kk}\\
&=(c \otimes a_1E_{11}(1) \otimes \ldots \otimes a_{n+1}E_{11}(1))tr(u_1 \ldots u_{n+1})\\
&=(inc_{1\bullet} \circ tr_\bullet)(c \otimes a_1u_1 \otimes \ldots \otimes a_{n+1}u_{n+1})
\end{align*}
Now, for $0<i <j$, we have
\begin{align*}
&d_{i}h_j(c \otimes a_1u_1 \otimes \ldots \otimes a_{n+1}u_{n+1})\\
&=d_i\Big(c \otimes \sum\limits_{1 \leq k,l,\ldots,p,q,s\leq r}  a_1E_{11}({u_1}_{kl}) \otimes a_2E_{11}({u_2}_{lm})\otimes \ldots \otimes a_jE_{11}({u_j}_{pq})  \otimes 1_AE_{1q}(1) \otimes a_{j+1}u_{j+1} \otimes  \ldots \otimes a_{n+1} E_{s1}({u_{n+1}}_{sk})\Big)\\
&=c \otimes \sum\limits_{1 \leq k,l,\ldots,p,q,s\leq r}  a_1E_{11}({u_1}_{kl}) \otimes a_2E_{11}({u_2}_{lm})\otimes \ldots \otimes a_iE_{11}({u_i}_{tt'})a_{i+1}E_{11}({u_{i+1}}_{t'n})\otimes \ldots \otimes a_jE_{11}({u_j}_{pq})\\ 
& \quad \quad \quad \quad \quad \quad \quad  \otimes 1_AE_{1q}(1) \otimes a_{j+1}u_{j+1} \otimes  \ldots \otimes a_{n+1} E_{s1}({u_{n+1}}_{sk})\\
&=c \otimes \sum\limits_{1 \leq k,l,\ldots,p,q,s\leq r}  a_1E_{11}({u_1}_{kl}) \otimes a_2E_{11}({u_2}_{lm})\otimes \ldots \otimes a_ia_{i+1}E_{11}(({u_i}{u_{i+1})}_{tn})\otimes \ldots \otimes a_jE_{11}({u_j}_{pq})\\ 
& \quad \quad \quad \quad \quad \quad \quad  \otimes 1_AE_{1q}(1) \otimes a_{j+1}u_{j+1} \otimes  \ldots \otimes a_{n+1} E_{s1}({u_{n+1}}_{sk})\\
&=h_{j-1}\big(c \otimes  a_1u_1 \otimes a_2u_2\otimes \ldots \otimes a_ia_{i+1}{u_i}u_{i+1}\otimes \ldots \otimes a_{n+1}u_{n+1}\big) \\
&=h_{j-1}d_i(c \otimes a_1u_1 \otimes \ldots \otimes a_{n+1}u_{n+1}))
\end{align*}
Moreover, for $j>0$, 
\begin{align*}
&d_{0}h_j(c \otimes a_1u_1 \otimes \ldots \otimes a_{n+1}u_{n+1})\\
&=d_0\Big(c \otimes \sum\limits_{1 \leq k,l,\ldots,p,q,s\leq r}  a_1E_{11}({u_1}_{kl}) \otimes a_2E_{11}({u_2}_{lm})\otimes \ldots \otimes a_jE_{11}({u_j}_{pq})  \otimes 1_AE_{1q}(1) \otimes a_{j+1}u_{j+1} \otimes  \ldots \otimes a_{n+1} E_{s1}({u_{n+1}}_{sk})\Big)\\
&=c^\psi \otimes \sum\limits_{1 \leq k,l,\ldots,p,q,s\leq r} a_2E_{11}({u_2}_{lm})\otimes \ldots \otimes a_jE_{11}({u_j}_{pq})  \otimes 1_AE_{1q}(1) \otimes a_{j+1}u_{j+1} \otimes  \ldots \otimes a_{n+1} E_{s1}({u_{n+1}}_{sk}) (a_1E_{11}({u_1}_{kl}))_\psi\\
&=c^\psi \otimes \sum\limits_{1 \leq k,l,\ldots,p,q,s\leq r} a_2E_{11}({u_2}_{lm})\otimes \ldots \otimes a_jE_{11}({u_j}_{pq})  \otimes 1_AE_{1q}(1) \otimes a_{j+1}u_{j+1} \otimes  \ldots \otimes a_{n+1}a_{1\psi} E_{s1}({u_{n+1}}_{sk}) E_{11}({u_1}_{kl})\\
&=c^\psi \otimes \sum\limits_{1 \leq k,l,\ldots,p,q,s\leq r} a_2E_{11}({u_2}_{lm})\otimes \ldots \otimes a_jE_{11}({u_j}_{pq})  \otimes 1_AE_{1q}(1) \otimes a_{j+1}u_{j+1} \otimes  \ldots \otimes a_{n+1}a_{1\psi} E_{s1}({(u_{n+1}u_1)}_{sl})\\
&=h_{j-1}(c^\psi \otimes a_2u_2 \otimes \ldots \otimes a_{n+1}a_{1\psi}u_{n+1}u_1)=h_{j-1}d_0(c \otimes a_1u_1 \otimes \ldots \otimes a_{n+1}u_{n+1})
\end{align*}
Using the equality $\sum_{q=1}^rE_{11}({u}_{pq})E_{1q}(1)=E_{1p}(1)u$, we have for $0 < i \leq n$
\begin{align*}
&d_{i}h_i(c \otimes a_1u_1 \otimes \ldots \otimes a_{n+1}u_{n+1})\\
&=d_i\Big(c \otimes \sum\limits_{1 \leq k,l,\ldots,p,q,s\leq r}  a_1E_{11}({u_1}_{kl}) \otimes a_2E_{11}({u_2}_{lm})\otimes \ldots \otimes a_iE_{11}({u_i}_{pq}) \otimes 1_AE_{1q}(1) \otimes a_{i+1}u_{i+1} \otimes  \ldots \otimes a_{n+1} E_{s1}({u_{n+1}}_{sk})\Big)\\
&=c \otimes \sum\limits_{1 \leq k,l,\ldots,p,q,s\leq r}  a_1E_{11}({u_1}_{kl}) \otimes a_2E_{11}({u_2}_{lm})\otimes \ldots \otimes a_iE_{11}({u_i}_{pq})E_{1q}(1) \otimes a_{i+1}u_{i+1} \otimes  \ldots \otimes a_{n+1} E_{s1}({u_{n+1}}_{sk})\\
&=c \otimes \sum\limits_{1 \leq k,l,\ldots,p,s\leq r}  a_1E_{11}({u_1}_{kl}) \otimes a_2E_{11}({u_2}_{lm})\otimes \ldots \otimes a_iE_{1p}(1)u_i \otimes a_{i+1}u_{i+1} \otimes  \ldots \otimes a_{n+1} E_{s1}({u_{n+1}}_{sk})\\
&=d_i\left(c \otimes \sum\limits_{1 \leq k,l,\ldots,p,s\leq r}  a_1E_{11}({u_1}_{kl}) \otimes a_2E_{11}({u_2}_{lm})\otimes \ldots \otimes 1_AE_{1p}(1) \otimes a_iu_i \otimes a_{i+1}u_{i+1} \otimes  \ldots \otimes a_{n+1} E_{s1}({u_{n+1}}_{sk})\right)\\
&=d_ih_{i-1}(c \otimes a_1u_1 \otimes \ldots \otimes a_{n+1}u_{n+1})
\end{align*}

For $i > j+1$, it may be similarly verified that $d_ih_j=h_jd_{i-1}$. This proves the result.
\end{proof}

\begin{Thm}
The morphisms
\begin{equation*}
\begin{array}{ll}
inc_{1\bullet}:HH_\bullet(A,C,\psi) \longrightarrow HH_\bullet(M_r(A),C,\psi)\\
tr_\bullet:HH_\bullet(M_r(A),C,\psi) \longrightarrow HH_\bullet(A,C,\psi)
\end{array}
\end{equation*}
are mutually inverse isomorphisms of Hochschild homologies.
\end{Thm}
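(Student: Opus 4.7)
The plan is to observe that the final statement is essentially a formal corollary of the preceding theorem, which already does the heavy lifting by constructing the explicit chain homotopy $h = \sum_{i=0}^n(-1)^i h_i$ between $inc_{1\bullet} \circ tr_\bullet$ and the identity on $\mathscr{C}_\bullet(M_r(A),C,\psi)$, together with the direct verification that $tr_\bullet \circ inc_{1\bullet} = id$ on $\mathscr{C}_\bullet(A,C,\psi)$.

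First I would note that $inc_{1\bullet}$ and $tr_\bullet$ are both morphisms of complexes (by Lemma \ref{L5.2} for $tr_\bullet$, and by the fact that $inc_1$ is an algebra morphism compatible with the entwinings for $inc_{1\bullet}$), hence they descend to well-defined maps $inc_{1\bullet}: HH_\bullet(A,C,\psi) \longrightarrow HH_\bullet(M_r(A),C,\psi)$ and $tr_\bullet: HH_\bullet(M_r(A),C,\psi) \longrightarrow HH_\bullet(A,C,\psi)$ on the homology of the associated standard complexes (with differential $\sum_i (-1)^i d_i$).

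Next, I would invoke the standard homological fact that chain-homotopic maps induce identical maps on homology. Since $tr_\bullet \circ inc_{1\bullet}$ equals the identity already at the chain level, it is the identity on $HH_\bullet(A,C,\psi)$. Since $inc_{1\bullet} \circ tr_\bullet$ is chain homotopic to the identity on $\mathscr{C}_\bullet(M_r(A),C,\psi)$ via the homotopy constructed in the preceding theorem, the two induced maps agree on $HH_\bullet(M_r(A),C,\psi)$, and hence $inc_{1\bullet} \circ tr_\bullet$ induces the identity there as well.

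The conclusion is then immediate: the induced morphisms on Hochschild homology are mutually inverse isomorphisms. There is no real obstacle at this point, since the hard combinatorial verification of the simplicial identities in \eqref{relations} (the matrix-entry bookkeeping with the factors $E_{11}({u_j}_{pq})$ and the $1_A E_{1q}(1)$ term) has already been carried out in the proof of the preceding theorem; the present statement merely records the consequence on the level of derived invariants. I would therefore present the proof as a one-paragraph corollary, citing the preceding theorem for the chain-level statement and the standard fact that chain homotopy equivalences induce isomorphisms on homology.
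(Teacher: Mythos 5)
Your proposal is correct and matches the paper's intent exactly: the paper states this theorem with no written proof, treating it as an immediate corollary of the preceding theorem (where $tr_\bullet \circ inc_{1\bullet} = id$ and $inc_{1\bullet}\circ tr_\bullet \sim id$ are established at the chain level), which is precisely the argument you give.
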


\smallskip
For each $n \geq 0$, we now obtain $k$-linear maps $\{h^i:\mathscr C^{n+1}(M_r(A),C,\psi) \longrightarrow \mathscr C^n(M_r(A),C,\psi)\}_{0 \leq i \leq n}$ 
given by $h^i(f)=f \circ h_i$. Explicitly, for $1 \leq i \leq n$, we have
\begin{equation}\label{homco1}
\begin{array}{ll}
(h^i(f))(c \otimes a_1u_1 \otimes a_{n+1}u_{n+1})&=f\Big(c \otimes \sum\limits_{1 \leq k,l,\ldots,p,q,s\leq r}  a_1E_{11}({u_1}_{kl}) \otimes a_2E_{11}({u_2}_{lm})\otimes \ldots \otimes a_iE_{11}({u_i}_{pq})\\ 
& \quad \quad \quad \quad \quad \quad \quad  \otimes 1_AE_{1q}(1) \otimes a_{i+1}u_{i+1} \otimes  \ldots \otimes a_nu_n \otimes a_{n+1} E_{s1}({u_{n+1}}_{sk})\Big)
\end{array}
\end{equation}
and
\begin{equation}\label{homco2}
(h^0(f))(c \otimes a_1u_1 \otimes a_{n+1}u_{n+1})=f\Big(c \otimes \sum\limits_{1 \leq k,s \leq r} 1_AE_{1k}(1) \otimes a_1u_1 \otimes \ldots \otimes a_nu_n \otimes a_{n+1} E_{s1}({u_{n+1}}_{sk})\Big)
\end{equation}

\begin{thm}\label{Prps5.5}
The maps $tr^\bullet:\mathscr{C}^\bullet(A,C,\psi) \longrightarrow \mathscr{C}^\bullet(M_r(A),C,\psi)$ and $inc_1^\bullet:\mathscr{C}^\bullet(M_r(A),C,\psi) \longrightarrow \mathscr{C}^\bullet(A,C,\psi)$ are homotopy inverses to each other. In particular, the morphisms
\begin{equation*}
\begin{array}{ll}
tr^{\bullet}:HH^\bullet(A,C,\psi) \longrightarrow HH^\bullet(M_r(A),C,\psi)\\
inc_1^\bullet:HH^\bullet(M_r(A),C,\psi) \longrightarrow HH^\bullet(A,C,\psi)
\end{array}
\end{equation*}
are mutually inverse isomorphisms of  Hochschild cohomologies.
\end{thm}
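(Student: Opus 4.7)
The plan is to deduce the cochain-level result from its chain-level counterpart via the duality \eqref{mmr}, which identifies $\mathscr C^\bullet(A,C,\psi)$ with $\mathrm{Hom}(\mathscr C_\bullet(A,C,\psi), k)$. Under this identification, $tr^\bullet$ is precisely the transpose $tr_\bullet^{\ast}$ of $tr_\bullet$, and $inc_1^\bullet = inc_{1\bullet}^{\ast}$; moreover, the concrete formulas \eqref{homco1} and \eqref{homco2} are, by construction, the transposes of the chain-level homotopy components $h_i$ built in the proof of the preceding theorem, namely $h^i(f) = f \circ h_i$.

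Applying the contravariant functor $\mathrm{Hom}(-,k)$ to the chain-level equality $tr_\bullet \circ inc_{1\bullet} = id$ already established, one at once obtains $inc_1^\bullet \circ tr^\bullet = id$ on $\mathscr C^\bullet(A,C,\psi)$. For the reverse composition, I would take the chain-level homotopy relation $inc_{1\bullet} \circ tr_\bullet - id = \partial h + h \partial$, with $h = \sum_{i=0}^n (-1)^i h_i$ exhibited in the previous theorem, and dualize it. Setting $h^\bullet := \sum_{i=0}^n (-1)^i h^i$, the general identities $(\partial h)^{\ast} = h^{\ast} \delta$ and $(h \partial)^{\ast} = \delta h^{\ast}$ give
\begin{equation*}
tr^\bullet \circ inc_1^\bullet - id = h^\bullet \delta + \delta h^\bullet.
\end{equation*}
Hence $tr^\bullet$ and $inc_1^\bullet$ are mutually homotopy inverse on cochains, and they therefore induce mutually inverse isomorphisms $HH^\bullet(A,C,\psi) \cong HH^\bullet(M_r(A),C,\psi)$.

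All of the analytic content is already packaged into the chain-level homotopy, so the only thing that remains is bookkeeping: one must confirm that the dual operation produces exactly the formulas \eqref{homco1} and \eqref{homco2}, and that the signs in $h^\bullet$ agree with those used in $h$. The required cochain identities reduce to transposes of the face-homotopy relations \eqref{relations}, which can either be extracted by functoriality of $\mathrm{Hom}(-,k)$ on complexes of $k$-vector spaces or, if one prefers, verified mechanically from \eqref{homco1} and \eqref{homco2}. Since $k$ is a field, $\mathrm{Hom}(-,k)$ is exact, so no obstruction arises in passing from chain level to cochain level; the main obstacle, if any, is purely notational, namely correctly tracking the precomposition conventions when writing the dual homotopy out explicitly.
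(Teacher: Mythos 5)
Your proposal is correct and is exactly the argument the paper intends: the paper defines $h^i(f)=f\circ h_i$ in \eqref{homco1}--\eqref{homco2} and leaves Proposition \ref{Prps5.5} as the immediate dualization (via \eqref{mmr} and exactness of $Hom(-,k)$ over a field) of the chain-level identities $tr_\bullet\circ inc_{1\bullet}=id$ and $inc_{1\bullet}\circ tr_\bullet-id=\partial h+h\partial$. No substantive difference from the paper's route.
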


\section{Invariant subcomplex and Morita invariance of cyclic cohomology}
Let $(A,C,\psi)$ be an entwining structure. The dual $\mathscr C^\bullet(A,C,\psi)$ of $\mathscr C_\bullet(A,C,\psi)$ is a precosimplicial module, equipped with maps $\{\delta_i:\mathscr C^n(A,C,\psi)\longrightarrow \mathscr C^{n+1}(A,C,\psi)\}_{0\leq i\leq n+1}$. For each $n \geq 0$, we set
$\mathscr I^n(A,C,\psi) \subseteq  \mathscr C^n(A,C,\psi)$ to be the collection of morphisms $g \in Hom(C \otimes A^{n+1},k)$ satisfying
\begin{equation*}
g(c \otimes a_1 \otimes \ldots \otimes a_{n+1})=g(c^{\psi^{n+1}} \otimes a_{1\psi} \otimes \ldots \otimes a_{n+1\psi})
\end{equation*}
for every $c \in C$ and $a_1, \ldots, a_{n+1} \in A$.

\begin{lem}\label{precosim}
Let $(A,C,\psi)$ be an entwining structure. Then, $\mathscr I^\bullet(A,C,\psi)$ is a subcomplex of the Hochschild complex $\mathscr C^\bullet(A,C,\psi)$.
\end{lem}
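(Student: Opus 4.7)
The plan is to fix $g \in \mathscr I^n(A,C,\psi)$ and verify directly that $\delta^n(g) \in \mathscr I^{n+1}(A,C,\psi)$, by expanding the two sides of the required invariance
\begin{equation*}
\delta^n(g)(c \otimes a_1 \otimes \ldots \otimes a_{n+2}) = \delta^n(g)(c^{\psi^{n+2}} \otimes a_{1\psi} \otimes \ldots \otimes a_{(n+2)\psi})
\end{equation*}
using the explicit formula in \eqref{hochg}, and then matching the ``face'' term and each of the $n+1$ ``interior'' terms separately. The only tools needed are the $\mathscr I^n$-invariance of $g$ together with the multiplicativity axiom $(ab)_\psi \otimes c^\psi = a_\psi b_\psi \otimes c^{\psi\psi}$ from \eqref{ent}.

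For an interior summand $(-1)^i g(c, a_1, \ldots, a_ia_{i+1}, \ldots, a_{n+2})$ with $1 \leq i \leq n+1$, the combined tuple has exactly $n+1$ algebra slots, so the $\mathscr I^n$-invariance of $g$ produces $g(c^{\psi^{n+1}}, a_{1\psi}, \ldots, (a_ia_{i+1})_\psi, \ldots, a_{(n+2)\psi})$; splitting the combined factor via multiplicativity adds one more $\psi$ on $c$ and yields $g(c^{\psi^{n+2}}, a_{1\psi}, \ldots, a_{i\psi}a_{(i+1)\psi}, \ldots, a_{(n+2)\psi})$, which is exactly the $i$-th interior term on the other side. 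For the face term $g(c^\psi, a_2, \ldots, a_{n+1}, a_{n+2}a_{1\psi})$, the same two moves give $g(c^{\psi^{n+3}}, a_{2\psi}, \ldots, a_{(n+1)\psi}, a_{(n+2)\psi}a_{1\psi\psi})$, while the face term of $\delta^n(g)$ applied to $(c^{\psi^{n+2}}, a_{1\psi}, \ldots, a_{(n+2)\psi})$ expands directly to $g((c^{\psi^{n+2}})^\psi, a_{2\psi}, \ldots, a_{(n+1)\psi}, a_{(n+2)\psi}(a_{1\psi})_\psi)$, which is the same expression.

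The only real obstacle is the bookkeeping of the $\psi$-indices on $c$: every application of multiplicativity contributes an additional $\psi$, so one must explain why the total counts agree on the two sides. The key observation is that on the right-hand side the count already starts at $\psi^{n+2}$, but splitting $(a_{1\psi})_\psi$ in the face term introduces one further $\psi$, matching exactly the two applications of multiplicativity (once on $c^\psi$ together with the $n+1$ algebra slots, once on the combined factor $a_{n+2}a_{1\psi}$) needed on the left. The entwining axioms in \eqref{ent} are set up precisely so that this accounting balances, which is why the invariance condition survives the differential.
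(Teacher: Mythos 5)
Your proposal is correct and follows essentially the same route as the paper: the paper verifies that each coface map $\delta_i$, $0\leq i\leq n+1$, preserves $\mathscr I^\bullet(A,C,\psi)$ by exactly the two moves you describe (apply the $\mathscr I^n$-invariance of $g$, then split the combined factor via $(ab)_\psi\otimes c^\psi=a_\psi b_\psi\otimes c^{\psi\psi}$), which is the same as your term-by-term matching of the full differential, and the $\psi$-index bookkeeping works out as you indicate ($c^{\psi^{n+3}}$ on both sides of the face term, $c^{\psi^{n+2}}$ for the interior terms). The only cosmetic slip is calling the first of your two moves an "application of multiplicativity" when it is really the invariance condition defining $\mathscr I^n$; the counting itself is right.
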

\begin{proof}
We will show that $\delta_i$ for $0 \leq i \leq n+1$ restricts to $\mathscr I^\bullet(A,C,\psi)$. For any $n \geq 0$ and $g \in \mathscr I^n(A,C,\psi)$, we have
\begin{equation*}
\begin{array}{ll}
(\delta_0g)(c \otimes a_1 \otimes \ldots \otimes a_{n+2})&=g(c^\psi \otimes a_2 \otimes \ldots \otimes a_{n+2}a_{1\psi})\\
&=g\left(c^{\psi^{n+2}} \otimes a_{2\psi} \otimes \ldots \otimes (a_{n+2}a_{1\psi})_\psi \right)\\
&=g\left(c^{\psi^{n+3}} \otimes a_{2\psi} \otimes \ldots \otimes a_{n+2\psi}a_{1\psi\psi}\right)\\
&=(\delta_0g)(c^{\psi^{n+2}} \otimes a_{1\psi} \otimes a_{2\psi} \otimes \ldots \otimes a_{n+2\psi})
\end{array}
\end{equation*}
Hence, $\delta_0g \in \mathscr I^{n+1}(A,C,\psi)$. Moreover, for $1 \leq i \leq n+1$, we have
\begin{equation*}
\begin{array}{ll}
(\delta_ig)(c \otimes a_1 \otimes \ldots \otimes a_{n+2})&=g(c \otimes a_1 \otimes \ldots \otimes a_ia_{i+1} \otimes \ldots \otimes a_{n+2})\\
&=g(c^{\psi^{n+1}} \otimes a_{1\psi} \otimes \ldots \otimes (a_ia_{i+1})_\psi \otimes \ldots \otimes a_{n+2\psi})\\
&=g(c^{\psi^{n+2}} \otimes a_{1\psi} \otimes \ldots \otimes a_{i\psi}a_{i+1\psi} \otimes \ldots \otimes a_{n+2\psi})\\
&=(\delta_ig)(c^{\psi^{n+2}} \otimes a_{1\psi} \otimes a_{2\psi} \otimes \ldots \otimes a_{n+2\psi}) 
\end{array}
\end{equation*}
This shows that $\delta_ig \in \mathscr I^{n+1}(A,C,\psi)$ for each $1 \leq i \leq n+1$. This proves the result.
\end{proof}

We will refer to $\mathscr I^\bullet(A,C,\psi)$ as the invariant subcomplex of $\mathscr C^\bullet(A,C,\psi)$. For each $n \geq 0$, we define the $k$-linear maps $\{\sigma_j:\mathscr I^{n+1}(A,C,\psi) \longrightarrow \mathscr I^{n}(A,C,\psi)\}_{0 \leq j \leq n}$ given by
\begin{equation}\label{deg}
(\sigma_jf)(c \otimes a_1 \otimes \ldots \otimes a_{n+1})=f(c \otimes a_1 \otimes \ldots a_j \otimes 1_A \otimes a_{j+1} \otimes \ldots a_{n+1})
\end{equation} We also define the cyclic operator $\tau_n:\mathscr I^n(A,C,\psi) \longrightarrow \mathscr I^n(A,C,\psi)$ as follows:
\begin{equation}\label{cyclicop}
(\tau_ng)(c \otimes a_1 \otimes \ldots \otimes a_{n+1})=(-1)^ng(c^\psi \otimes a_2 \otimes \ldots \otimes a_{n+1} \otimes a_{1\psi})
\end{equation}

\begin{thm}\label{P6.2}
$\mathscr I^\bullet(A,C,\psi)$ is a cocyclic module.
\end{thm}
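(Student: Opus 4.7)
The plan is to verify the axioms of a cocyclic $k$-module for $\mathscr I^\bullet(A,C,\psi)$, equipped with the cofaces $\delta_i$ inherited from the Hochschild complex, the codegeneracies $\sigma_j$ of \eqref{deg}, and the cyclic operators $\tau_n$ of \eqref{cyclicop}. Since Lemma \ref{precosim} already gives the precosimplicial structure on $\mathscr I^\bullet$, what remains is (i) well-definedness of $\sigma_j$ and $\tau_n$ on $\mathscr I^\bullet$, (ii) the cosimplicial identities involving the $\sigma_j$, and (iii) the cocyclic compatibilities between $\tau_n$, $\delta_i$, and $\sigma_j$, together with $\tau_n^{n+1}=id$.

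For (i), the well-definedness of $\sigma_j$ on $\mathscr I^\bullet$ crucially uses the unital entwining axiom $1_\psi\otimes c^\psi=1\otimes c$ from \eqref{ent}: starting from $(\sigma_jf)(c\otimes a_1\otimes\ldots\otimes a_{n+1})=f(c\otimes a_1\otimes\ldots\otimes a_j\otimes 1_A\otimes a_{j+1}\otimes\ldots\otimes a_{n+1})$ and applying the $\mathscr I^{n+1}$-invariance of $f$, the instance of $\psi$ acting on the inserted $1_A$ collapses, so that $c^{\psi^{n+2}}$ reduces to $c^{\psi^{n+1}}$; this is exactly the invariance condition for $\sigma_jf\in\mathscr I^n$. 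For $\tau_n$, one substitutes the $\mathscr I^n$-invariance of $g$ into both sides of \eqref{cyclicop} and checks that they reduce to the common expression $(-1)^ng(c^{\psi^{n+2}}\otimes a_{2\psi}\otimes\ldots\otimes a_{(n+1)\psi}\otimes a_{1\psi\psi})$. The cosimplicial identities in (ii) are inherited from the standard simplicial insertion and concatenation of $1_A$, and their verification does not further interact with $\psi$ beyond what was already used for well-definedness.

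The identity $\tau_n^{n+1}=id$ is the heart of the statement, and $\mathscr I^\bullet$ was singled out precisely so that this would hold. Iterating \eqref{cyclicop} a total of $n+1$ times produces the sign $(-1)^{n(n+1)}=+1$ together with $g(c^{\psi^{n+1}}\otimes a_{1\psi}\otimes\ldots\otimes a_{(n+1)\psi})$, which equals $g(c\otimes a_1\otimes\ldots\otimes a_{n+1})$ by the very definition of $\mathscr I^n$. The remaining cocyclic compatibility relations ($\tau_n\delta_i=\delta_{i-1}\tau_{n-1}$ for $1\le i\le n$, $\tau_n\delta_0=\delta_n$, and the dual identities $\tau_n\sigma_i=\sigma_{i-1}\tau_{n+1}$ and $\tau_n\sigma_0=\sigma_n\tau_{n+1}^2$) reduce to direct computations that track the $\psi$'s using the multiplicative axiom $(ab)_\psi\otimes c^\psi=a_\psi b_\psi\otimes c^{\psi\psi}$ from \eqref{ent}. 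I expect the main bookkeeping obstacle to be the relation $\tau_n\delta_0=\delta_n$, where the cyclically twisted merge $a_{n+2}a_{1\psi}$ produced by the Hochschild coface $\delta_0$ must be reconciled with the plain merge effected by $\delta_n$ after applying $\tau_n$; here the interplay between $\psi$ and the multiplication of $A$ is essential for closing the argument.
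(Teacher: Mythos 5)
Your proposal is correct and follows essentially the same route as the paper: both arguments verify the cocyclic axioms directly, with $\tau_n^{n+1}=\mathrm{id}$ falling out of the defining invariance of $\mathscr I^n(A,C,\psi)$ and the remaining compatibilities checked by tracking the $\psi$'s through the entwining axioms (your explicit well-definedness checks for $\sigma_j$ and $\tau_n$, using $1_\psi\otimes c^\psi=1\otimes c$, are a useful addition that the paper leaves implicit). The only caveat is that, since the paper's $\tau_n$ in \eqref{cyclicop} already carries the sign $(-1)^n$, the compatibility relations hold in the signed form $\delta_i\tau_{n-1}=-\tau_n\delta_{i-1}$, $\delta_0=(-1)^n\tau_n\delta_n$, $\sigma_i\tau_{n+1}=-\tau_n\sigma_{i-1}$ and $\sigma_0\tau_{n+1}^2=(-1)^n\tau_n\sigma_n$, rather than in the unsigned form you quote.
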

\begin{proof}
From the proof of Lemma \ref{precosim}, we know that $\mathscr I^\bullet(A,C,\psi)$ is precosimplicial module. 
Together with the maps in \eqref{deg}, it may be easily verified that $\left(\mathscr I^\bullet(A,C,\psi),\delta_i^\bullet,\sigma_j^\bullet\right)$ is a cosimplicial module.

\smallskip
From \eqref{cyclicop}, we have for $g\in \mathscr I^n(A,C,\psi)$, 
\begin{equation*}
(\tau_n^{n+1}g)(c \otimes a_1 \otimes \ldots \otimes a_{n+1})=(-1)^{n(n+1)}g(c^{\psi^{n+1}} \otimes a_{1\psi} \otimes \ldots \otimes a_{n+1\psi})=g(c \otimes a_1 \otimes \ldots \otimes a_{n+1})
\end{equation*}
It remains therefore to verify the following identities:
\begin{equation*}
\begin{array}{ll}
\delta_i\tau_{n-1}&=-\tau_n\delta_{i-1}\qquad 1 \leq i \leq n\\
\delta_0&=(-1)^n\tau_n\delta_n\\
\sigma_i\tau_{n+1}&=-\tau_n\sigma_{i-1}\qquad 1 \leq i \leq n\\
\sigma_0\tau^2_{n+1}&=(-1)^n\tau_n\sigma_n
\end{array}
\end{equation*}
For $1 < i \leq n$ and $g \in \mathscr I^{n-1}(A,C,\psi)$, we have
\begin{equation*}
\begin{array}{ll}
(\delta_i\tau_{n-1}g)(c \otimes a_1 \otimes \ldots \otimes a_{n+1})&=(\tau_{n-1}g)(c \otimes a_1 \otimes \ldots \otimes a_ia_{i+1} \otimes \ldots a_{n+1})\\
&=(-1)^{n-1}g(c^\psi \otimes a_2 \otimes \ldots \otimes a_ia_{i+1} \otimes \ldots a_{n+1} \otimes a_{1\psi})\\
&=(-1)^{n-1}(\delta_{i-1}g)(c^\psi \otimes a_2 \otimes \ldots \otimes a_i \otimes a_{i+1} \otimes \ldots a_{n+1} \otimes a_{1\psi})\\
&=-(\tau_n\delta_{i-1}g)(c \otimes a_1 \otimes \ldots \otimes a_{n+1})
\end{array}
\end{equation*} For $i=1$, we have
\begin{equation*}
\begin{array}{ll}
(\delta_1\tau_{n-1}g)(c \otimes a_1 \otimes \ldots \otimes a_{n+1})&=(\tau_{n-1}g)(c \otimes a_1a_2 \otimes a_3\otimes \ldots   \ldots a_{n+1})\\
&=(-1)^{n-1}g(c^\psi \otimes a_3 \otimes \ldots \otimes a_{n+1} \otimes  (a_1a_2)_{\psi})\\
&=(-1)^{n-1}(\delta_{0}g)(c^\psi \otimes a_2 \otimes \ldots \otimes a_{n+1} \otimes a_{1\psi})\\
&=-(\tau_n\delta_{0}g)(c \otimes a_1 \otimes \ldots \otimes a_{n+1})
\end{array}
\end{equation*}
Clearly, $\delta_0=(-1)^n\tau_n\delta_n$. It may also be easily verified that $\sigma_i\tau_{n+1}=-\tau_n\sigma_{i-1}$ for $1 \leq i \leq n$.
Further, for any $g' \in \mathscr I^{n+1}(A,C,\psi)$
\begin{equation*}
\begin{array}{ll}
(\sigma_0\tau_{n+1}^2g')(c \otimes a_1 \otimes \ldots \otimes a_{n+1})&=(\tau_{n+1}^2g')(c \otimes 1_A \otimes a_1  \otimes \ldots \otimes a_{n+1})\\
&=(-1)^{n+1}(\tau_{n+1}g')(c \otimes a_1  \otimes \ldots \otimes a_{n+1} \otimes 1_A)\\
&=g'(c^\psi \otimes a_2 \otimes \ldots \otimes a_{n+1} \otimes 1_A \otimes a_{1\psi})\\
&=(\sigma_ng')(c^\psi \otimes a_2 \otimes \ldots \otimes a_{n+1} \otimes  a_{1\psi})\\
&=(-1)^n(\tau_n\sigma_ng')(c \otimes a_1 \otimes \ldots \otimes a_{n+1})
\end{array}
\end{equation*}
This proves the result.
\end{proof}

\begin{thm}\label{P6.3}
(1) The maps $inc_1^\bullet:\mathscr{C}^\bullet(M_r(A),C,\psi) \longrightarrow \mathscr{C}^\bullet(A,C,\psi)$ and $tr^\bullet:\mathscr{C}^\bullet(A,C,\psi) \longrightarrow \mathscr{C}^\bullet(M_r(A),C,\psi)$ restrict to the corresponding invariant subcomplexes. In other words, we have morphisms
\begin{equation*}
inc_1^\bullet:\mathscr{I}^\bullet(M_r(A),C,\psi) \longrightarrow \mathscr{I}^\bullet(A,C,\psi)\qquad 
tr^\bullet:\mathscr{I}^\bullet(A,C,\psi) \longrightarrow \mathscr{I}^\bullet(M_r(A),C,\psi)
\end{equation*}

\smallskip
(2) The morphisms $inc_1^\bullet:\mathscr{I}^\bullet(M_r(A),C,\psi) \longrightarrow \mathscr{I}^\bullet(A,C,\psi)$ and $tr^\bullet:\mathscr{I}^\bullet(A,C,\psi) \longrightarrow \mathscr{I}^\bullet(M_r(A),C,\psi)$ are homotopy inverses of each other. Hence, the invariant subcomplexes $\mathscr{I}^\bullet(M_r(A),C,\psi)$ and $\mathscr{I}^\bullet(A,C,\psi)$ are quasi-isomorphic.
\end{thm}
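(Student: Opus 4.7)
The plan is to deduce this theorem from Theorem \ref{Prps5.5} by checking that everything in the proof of that result (the maps $inc_1^\bullet$, $tr^\bullet$, the identity $inc_1^\bullet\circ tr^\bullet = id$, and the explicit chain homotopy $h^\bullet$) is compatible with the invariance condition defining $\mathscr I^\bullet$. The single structural fact that will drive everything is the observation that the entwining on $M_r(A)=A\otimes M_r(k)$ acts only on the $A$-factor, and in particular that matrix units of the form $1_AE_{pq}(1)$ satisfy $\psi(c\otimes 1_AE_{pq}(1))=1_AE_{pq}(1)\otimes c$ by the unital axiom in \eqref{ent}.

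For part (1), I would verify the two restrictions directly. For $inc_1^\bullet$, given $f\in \mathscr I^n(M_r(A),C,\psi)$, the evaluation $(inc_1^\bullet f)(c\otimes a_1\otimes\ldots\otimes a_{n+1})=f(c\otimes a_1E_{11}(1)\otimes\ldots\otimes a_{n+1}E_{11}(1))$ inherits the $\psi^{n+1}$-invariance of $f$ because $(a_iE_{11}(1))_\psi\otimes c^\psi=a_{i\psi}E_{11}(1)\otimes c^\psi$. For $tr^\bullet$, using the factored form $(tr^\bullet g)(c\otimes a_1u_1\otimes\ldots\otimes a_{n+1}u_{n+1})=tr(u_1\cdots u_{n+1})\,g(c\otimes a_1\otimes\ldots\otimes a_{n+1})$, the invariance of $g$ on $A$ lifts immediately to $M_r(A)$ since $(a_iu_i)_\psi=a_{i\psi}u_i$ while the scalar $tr(u_1\cdots u_{n+1})$ is unchanged.

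For part (2), the identity $tr_\bullet\circ inc_{1\bullet}=id$ of chain maps established in the proof of Theorem \ref{Prps5.5} dualizes to $inc_1^\bullet\circ tr^\bullet=id$ on $\mathscr C^\bullet$, and this restricts trivially to $\mathscr I^\bullet$ by part (1). It remains to show $tr^\bullet\circ inc_1^\bullet\sim id$ on $\mathscr I^\bullet(M_r(A),C,\psi)$. For this I would take the homotopy $h^\bullet=\sum_{i=0}^n(-1)^ih^i$ produced by dualizing the $h_i$ of Section 5 (formulas \eqref{homco1} and \eqref{homco2}) and argue that each $h^i$ preserves the invariant subcomplex; granting this, the homotopy relation transfers verbatim from $\mathscr C^\bullet(M_r(A),C,\psi)$ to $\mathscr I^\bullet(M_r(A),C,\psi)$.

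The main obstacle, and the only genuine calculation, is verifying that $h^i$ restricts. The key point is that in the formulas defining $h_i$ the tensor factor inserted at the appropriate slot is of the form $1_AE_{1q}(1)$, while the original factor $a_{n+1}u_{n+1}$ is split into $a_{n+1}E_{s1}(({u_{n+1}})_{sk})$; the $A$-parts of all new factors are either $1_A$ or $a_{j}$ times a scalar. Applying the $\psi^{n+2}$-invariance of $f\in \mathscr I^{n+1}(M_r(A),C,\psi)$ to $(h^if)(c\otimes a_1u_1\otimes\ldots\otimes a_{n+1}u_{n+1})$ and using that the transposition of $1_AE_{1q}(1)$ past $c$ is trivial, one of the $n+2$ applications of $\psi$ to $c$ is absorbed, yielding exactly the $\psi^{n+1}$-invariance asserting $h^if\in \mathscr I^n(M_r(A),C,\psi)$. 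Modulo this bookkeeping, which is tedious but mechanical, both statements of the theorem follow.
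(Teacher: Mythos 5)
Your proposal is correct and follows essentially the same route as the paper: part (1) is verified by the same two direct computations, and part (2) is handled by checking that each $h^i$ from \eqref{homco1}--\eqref{homco2} preserves the invariant subcomplex, with the key point being exactly the one you identify, namely that $\psi(c\otimes 1_AE_{1q}(1))=1_AE_{1q}(1)\otimes c$ absorbs one application of $\psi$ and turns the $\psi^{n+2}$-invariance of $f$ into the required $\psi^{n+1}$-invariance of $h^i(f)$. The "tedious but mechanical" bookkeeping you defer is precisely the computation the paper writes out, so nothing is missing.
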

\begin{proof}
(1) For any $n \geq 0$ and $f \in \mathscr{I}^n(M_r(A),C,\psi)$, we have, for $a_i\in A$, $c\in C$:
\begin{equation*}
\begin{array}{ll}
(inc_1^n(f))(c \otimes a_1 \otimes \ldots \otimes a_{n+1})&=f\left(c \otimes a_1E_{11}(1) \otimes \ldots \otimes a_{n+1}E_{11}(1) \right)\\
&=f\left(c^{\psi^{n+1}} \otimes a_{1\psi}E_{11}(1) \otimes \ldots  \otimes a_{n+1\psi}E_{11}(1) \right)\\
&=(inc_1^n(f))(c^{\psi^{n+1}} \otimes a_{1\psi} \otimes \ldots  \otimes a_{n+1\psi})\\
\end{array}
\end{equation*}
Hence, $inc_1^n(f) \in \mathscr{I}^n(A,C,\psi)$. Further, for any $g \in \mathscr{I}^n(A,C,\psi)$, we have, for $a_i\in A$, $u_i\in M_r(k)$, $c\in C$,
\begin{equation*}
\begin{array}{ll}
(tr^n(g))(c \otimes a_1u_1 \otimes \ldots \otimes a_{n+1}u_{n+1})&=g(c \otimes a_1 \otimes \ldots \otimes a_{n+1})tr(u_1u_2\ldots u_{n+1})\\
&=g(c^{\psi^{n+1}} \otimes a_{1\psi} \otimes \ldots  \otimes a_{n+1\psi})tr(u_1u_2\ldots u_{n+1})\\
&=(tr^n(g))\left(c^{\psi^{n+1}} \otimes a_{1\psi}u_1 \otimes \ldots  \otimes a_{n+1\psi}u_{n+1}\right)
\end{array}
\end{equation*}
Therefore, $tr^n( g) \in \mathscr{I}^n(M_r(A),C,\psi)$.

\smallskip

\smallskip
(2) We will now show that the homotopy $h=\sum_{i=0}^n (-1)^i h^i$ as constructed in \eqref{homco1} and \eqref{homco2} restricts to be a homotopy between the maps $tr^\bullet \circ inc_1^\bullet$ and $id_{\mathscr{I}^n(M_r(A),C,\psi)}$. For any $f \in \mathscr{I}^{n+1}(M_r(A),C,\psi)$ and $1 \leq i \leq n$, we have
\begin{align*}
(h^i(f))(c \otimes a_1u_1 \otimes \ldots \otimes  a_{n+1}u_{n+1})&=f\Big(c \otimes \sum\limits_{1 \leq k,l,\ldots,p,q,s\leq r}  a_1E_{11}({u_1}_{kl}) \otimes a_2E_{11}({u_2}_{lm})\otimes \ldots \otimes a_iE_{11}({u_i}_{pq})\\ 
& \quad \quad \quad \quad \quad \quad \quad  \otimes 1_AE_{1q}(1) \otimes a_{i+1}u_{i+1} \otimes  \ldots \otimes a_nu_n \otimes a_{n+1} E_{s1}({u_{n+1}}_{sk})\Big)\\
&=f\Big(c^{\psi^{n+2}} \otimes \sum\limits_{1 \leq k,l,\ldots,p,q,s\leq r}  a_{1\psi}E_{11}({u_1}_{kl}) \otimes a_{2\psi}E_{11}({u_2}_{lm})\otimes \ldots \otimes a_{i\psi}E_{11}({u_i}_{pq})\\ 
& \quad \quad \quad \quad \quad \quad \quad  \otimes (1_A)_\psi E_{1q}(1) \otimes a_{i+1\psi}u_{i+1} \otimes  \ldots \otimes a_{n\psi}u_n \otimes a_{n+1\psi} E_{s1}({u_{n+1}}_{sk})\Big)\\
&=f\Big(c^{\psi^{n+1}} \otimes \sum\limits_{1 \leq k,l,\ldots,p,q,s\leq r}  a_{1\psi}E_{11}({u_1}_{kl}) \otimes a_{2\psi}E_{11}({u_2}_{lm})\otimes \ldots \otimes a_{i\psi}E_{11}({u_i}_{pq})\\ 
& \quad \quad \quad \quad \quad \quad \quad  \otimes 1_AE_{1q}(1) \otimes a_{i+1\psi}u_{i+1} \otimes  \ldots \otimes a_{n\psi}u_n \otimes a_{n+1\psi} E_{s1}({u_{n+1}}_{sk})\Big)\\
&=(h^i(f))(c^{\psi^{n+1}}  \otimes a_{1\psi}u_1 \otimes \ldots \otimes a_{n\psi}u_n \otimes a_{n+1\psi}u_{n+1})
\end{align*}
and
\begin{align*}
(h^0(f))(c \otimes a_1u_1 \otimes \ldots \otimes  a_{n+1}u_{n+1})&=f\Big(c \otimes \sum\limits_{1 \leq k,s \leq r} 1_AE_{1k}(1) \otimes a_1u_1 \otimes \ldots \otimes a_nu_n \otimes a_{n+1} E_{s1}({u_{n+1}}_{sk})\Big)\\
&=f\big(c^{\psi^{n+1}} \otimes \sum\limits_{1 \leq k,s \leq r} 1_AE_{1k}(1) \otimes a_{1\psi}u_1 \otimes \ldots \otimes a_{n\psi}u_n \otimes a_{n+1\psi} E_{s1}({u_{n+1}}_{sk})\Big)\\
&=(h^0(f))(c^{\psi^{n+1}}  \otimes a_{1\psi}u_1 \otimes \ldots \otimes a_{n\psi}u_n \otimes a_{n+1\psi}u_{n+1})
\end{align*}
This proves the result.
\end{proof}

We now observe that $\mathscr C^\bullet_\lambda(A,C,\psi) \subseteq \mathscr I^\bullet(A,C,\psi)$. For any $g \in \mathscr C^n_\lambda(A,C,\psi)$, we have
\begin{equation}
g(c \otimes a_1 \otimes \ldots \otimes a_{n+1})=(-1)^ng(c^\psi \otimes a_2 \otimes \ldots \otimes a_{n+1} \otimes a_{1\psi})=(-1)^{n(n+1)}g(c^{\psi^{n+1}} \otimes a_{1\psi} \otimes \ldots \otimes a_{n+1\psi})
\end{equation} for every $c\in C$ and $a_1$,...,$a_{n+1}\in A$.
In fact, we observe that
\begin{equation}\label{fixed}
\mathscr C^\bullet_\lambda(A,C,\psi) = \{\mbox{$g\in \mathscr I^\bullet(A,C,\psi) $ $\vert$ $\tau_\bullet g=g$}\}
\end{equation}

\begin{Thm}\label{T6.4}   We have mutually inverse isomorphisms
\begin{equation*}
inc_1^\bullet:H^\bullet_\lambda(M_r(A),C,\psi) \longrightarrow H^\bullet_\lambda(A,C,\psi)\qquad 
tr^\bullet:H^\bullet_\lambda(A,C,\psi) \longrightarrow H^\bullet_\lambda(M_r(A),C,\psi)
\end{equation*}
 of cyclic cohomology groups.
\end{Thm}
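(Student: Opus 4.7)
The plan is to exploit the cocyclic module structure on $\mathscr I^\bullet$ established in Proposition \ref{P6.2}, together with the identification $\mathscr C^\bullet_\lambda = (\mathscr I^\bullet)^\tau$ recorded in \eqref{fixed}. The crucial additional input is that the comparison maps $inc_1^\bullet$ and $tr^\bullet$ from Proposition \ref{Prps5.5}, restricted to the invariant subcomplexes in Proposition \ref{P6.3}, are in fact morphisms of \emph{cocyclic} (not merely cosimplicial) modules, i.e., they commute with the cyclic operator $\tau$, and hence automatically restrict to the $\tau$-invariant subcomplexes $\mathscr C^\bullet_\lambda$.

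First I would verify directly that $inc_1^\bullet$ and $tr^\bullet$ commute with $\tau_n$ from \eqref{cyclicop}. For $inc_1^n$ this is immediate, because every matrix factor $E_{11}(1)$ is preserved both by $\psi$ and by the cyclic permutation of tensor factors. For $tr^n$, one computes
\begin{equation*}
\tau_n(tr^n(g))(c \otimes a_1u_1 \otimes \ldots \otimes a_{n+1}u_{n+1}) = (-1)^n g(c^\psi \otimes a_2 \otimes \ldots \otimes a_{n+1} \otimes a_{1\psi}) \cdot tr(u_2 \cdots u_{n+1} u_1),
\end{equation*}
and invokes the cyclic invariance of the matrix trace $tr(u_2 \cdots u_{n+1} u_1) = tr(u_1 u_2 \cdots u_{n+1})$ to identify this with $tr^n(\tau_n g)$. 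Therefore, by \eqref{fixed}, both maps restrict to give $inc_1^\bullet : \mathscr C^\bullet_\lambda(M_r(A),C,\psi) \longrightarrow \mathscr C^\bullet_\lambda(A,C,\psi)$ and $tr^\bullet$ in the reverse direction.

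The relation $tr^\bullet \circ inc_1^\bullet = \mathrm{id}$ already holds at the chain level, so it induces the identity on $H^\bullet_\lambda(A,C,\psi)$. For the opposite composition, I would invoke the Connes SBI long exact sequence
\begin{equation*}
\cdots \longrightarrow H^{n-1}_\lambda \stackrel{S}{\longrightarrow} H^{n+1}_\lambda \stackrel{I}{\longrightarrow} HH^{n+1} \stackrel{B}{\longrightarrow} H^n_\lambda \longrightarrow \cdots
\end{equation*}
associated to any cocyclic module. The pair $(inc_1^\bullet, tr^\bullet)$ yields a morphism of SBI sequences between $\mathscr I^\bullet(M_r(A),C,\psi)$ and $\mathscr I^\bullet(A,C,\psi)$, and Proposition \ref{Prps5.5} asserts they are mutually inverse isomorphisms on the Hochschild columns. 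Induction on cohomological degree combined with the five lemma then forces the induced maps on $H^\bullet_\lambda$ to be isomorphisms, and the chain-level identity $tr^\bullet \circ inc_1^\bullet = \mathrm{id}$ identifies the inverse.

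The main obstacle is this last step: invoking (or setting up) the SBI long exact sequence in the paper's generality. An alternative valid in characteristic zero is to average the chain homotopy $h = \sum_i (-1)^i h^i$ from Proposition \ref{Prps5.5} over the cyclic group generated by $\tau$, thereby producing a $\tau$-equivariant homotopy that descends to $\mathscr C^\bullet_\lambda$; this bypasses SBI but requires $n+1$ to be invertible in $k$. Either way, the structural input one cannot dispense with is the $\tau$-equivariance of $inc_1^\bullet$ and $tr^\bullet$ checked in the first step.
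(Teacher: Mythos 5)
Your proposal is correct and follows essentially the same route as the paper: the paper likewise verifies that $inc_1^\bullet$ and $tr^\bullet$ commute with the cyclic operator $\tau$ on the invariant subcomplexes, invokes the mutually inverse Hochschild isomorphisms from Propositions \ref{Prps5.5} and \ref{P6.3}, and concludes via the Hochschild-to-cyclic spectral sequence of the cyclic bicomplex, which is interchangeable with your SBI-plus-five-lemma (or averaged-homotopy) argument — both implicitly need characteristic zero to identify the cohomology of the $\lambda$-subcomplex with that of the bicomplex. One small slip: with the paper's conventions the composition equal to the identity at the cochain level is $inc_1^\bullet \circ tr^\bullet$ on $\mathscr C^\bullet(A,C,\psi)$ (the dual of $tr_\bullet \circ inc_{1\bullet} = \mathrm{id}$), not $tr^\bullet \circ inc_1^\bullet$; since you then assert the identity on $H^\bullet_\lambda(A,C,\psi)$, this is only a transposition of symbols and nothing in the argument breaks.
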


\begin{proof}
By Proposition \ref{P6.3}, $inc_1^\bullet:\mathscr{I}^\bullet(M_r(A),C,\psi) \longrightarrow \mathscr{I}^\bullet(A,C,\psi)$ and $tr^\bullet:\mathscr{I}^\bullet(A,C,\psi) \longrightarrow \mathscr{I}^\bullet(M_r(A),C,\psi)$ are obtained by restricting the dual of $inc_{1\bullet}: \mathscr C_\bullet(A,C,\psi)\longrightarrow 
\mathscr C_\bullet(M_r(A),C,\psi)$ and  $tr_\bullet:\mathscr C_\bullet(M_r(A),C,\psi)\longrightarrow \mathscr C_\bullet(A,C,\psi)$ to their respective invariant subcomplexes.  It is easily verified that $inc_1^\bullet$ and $tr^\bullet$ commute with the cyclic operators on  $\mathscr{I}^\bullet(A,C,\psi) $ and $ \mathscr{I}^\bullet(M_r(A),C,\psi)$. This shows that $
inc_1^\bullet:\mathscr{I}^\bullet(M_r(A),C,\psi) \longrightarrow \mathscr{I}^\bullet(A,C,\psi)$ and $
tr^\bullet:\mathscr{I}^\bullet(A,C,\psi) \longrightarrow \mathscr{I}^\bullet(M_r(A),C,\psi)$ induce morphisms of the double complexes computing the cyclic
cohomology of $ \mathscr{I}^\bullet(A,C,\psi)$ and $\mathscr{I}^\bullet(M_r(A),C,\psi)$.

\smallskip
From Proposition \ref{P6.3}, we know that $inc_1^\bullet$ and $tr^\bullet$ induce mutually inverse isomorphisms of the Hochschild cohomologies of
$\mathscr{I}^\bullet(A,C,\psi)$ and $ \mathscr{I}^\bullet(M_r(A),C,\psi)$. Since $
\mathscr C^\bullet_\lambda(A,C,\psi) = \{\mbox{$g\in \mathscr I^\bullet(A,C,\psi) $ $\vert$ $\tau_\bullet g=g$}\}$, the result now follows from the Hochschild to cyclic spectral sequence.
\end{proof}

\section{Vanishing cycles and coboundaries}
Let $(A,C,\psi)$ and $(A',C',\psi')$ be entwining structures. A morphism of entwining structures from $(A,C,\psi)$ to $(A',C',\psi')$ is a pair $(\alpha,\gamma)$, where $\alpha:A \longrightarrow A'$ is a $k$-algebra morphism and $\gamma:C \longrightarrow C'$ is a $k$-coalgebra morphism such that
\begin{equation}\label{entmor}
(\alpha \otimes \gamma) \circ \psi=\psi' \circ (\gamma \otimes \alpha)
\end{equation}

\begin{lem}\label{lem7.1}
Let $(\alpha,\gamma):(A,C,\psi)\longrightarrow (A',C',\psi')$ be a morphism of entwining structures. Then, $(\alpha,\gamma)$ induces a 
morphism of complexes $F^\bullet(\alpha,\gamma):\mathscr C^\bullet (A',C',\psi') \longrightarrow \mathscr C^\bullet (A,C,\psi)$ and  a morphism
\begin{equation*}
F^\bullet(\alpha,\gamma):H^\bullet_\lambda(A',C',\psi') \longrightarrow H^\bullet_\lambda(A,C,\psi)
\end{equation*}
of entwined cyclic cohomologies.
\end{lem}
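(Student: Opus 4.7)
The plan is to define the map $F^\bullet(\alpha,\gamma)$ at the level of cochains by pullback and then check that it commutes with both the Hochschild differential and the cyclic symmetry condition, so that it descends to cyclic cohomology. Explicitly, for $g' \in \mathscr C^n(A',C',\psi') = \mathrm{Hom}(C' \otimes (A')^{n+1},k)$, I would set
\begin{equation*}
F^n(\alpha,\gamma)(g')(c \otimes a_1 \otimes \cdots \otimes a_{n+1}) := g'\bigl(\gamma(c) \otimes \alpha(a_1) \otimes \cdots \otimes \alpha(a_{n+1})\bigr)
\end{equation*}
for $c \in C$ and $a_i \in A$. Since $\alpha$ is a $k$-algebra morphism, the multiplicative terms in the Hochschild differential \eqref{hochg} transport cleanly. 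The one nontrivial term to check is the $\psi$-wraparound term $g(c^\psi,a_2,\ldots,a_{n+1},a_{n+2}a_{1\psi})$, and for this one invokes the compatibility condition \eqref{entmor}, i.e.\ $(\alpha \otimes \gamma) \circ \psi = \psi' \circ (\gamma \otimes \alpha)$, which gives $\alpha(a_{1\psi}) \otimes \gamma(c^\psi) = \alpha(a_1)_{\psi'} \otimes \gamma(c)^{\psi'}$. Combining these, $F^{n+1}(\alpha,\gamma) \circ \delta^n = \delta^n \circ F^n(\alpha,\gamma)$, so $F^\bullet(\alpha,\gamma)$ is a morphism of Hochschild complexes.

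Next I would verify that $F^n(\alpha,\gamma)$ restricts to the cyclic subcomplexes $\mathscr C^\bullet_\lambda$. Given $g' \in \mathscr C^n_\lambda(A',C',\psi')$, one computes
\begin{equation*}
F^n(\alpha,\gamma)(g')(c,a_1,\ldots,a_{n+1}) = g'(\gamma(c),\alpha(a_1),\ldots,\alpha(a_{n+1})) = (-1)^n g'(\gamma(c)^{\psi'},\alpha(a_2),\ldots,\alpha(a_{n+1}),\alpha(a_1)_{\psi'})
\end{equation*}
using condition \eqref{lambda1} for $g'$, and then the compatibility \eqref{entmor} again rewrites the right-hand side as $(-1)^n F^n(\alpha,\gamma)(g')(c^\psi,a_2,\ldots,a_{n+1},a_{1\psi})$, which is exactly the cyclic condition in $\mathscr C^n_\lambda(A,C,\psi)$. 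Since $F^\bullet(\alpha,\gamma)$ is already a chain map for the Hochschild differential, its restriction to the cyclic subcomplex is automatically a chain map by Theorem \ref{T2.2}, and hence induces a map on cyclic cohomology $H^\bullet_\lambda(A',C',\psi') \longrightarrow H^\bullet_\lambda(A,C,\psi)$.

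There is essentially no obstacle here beyond careful bookkeeping: the entire content of the lemma is packaged inside the single compatibility relation \eqref{entmor}, which simultaneously handles the $\psi$-wraparound term in the Hochschild differential and the $\psi$-twisted cyclic symmetry. The slight subtlety is remembering to use \eqref{entmor} in both places and to exploit that $\gamma$ is a coalgebra map only insofar as it respects the counit; in the cochain-level computation the comultiplication does not intervene, so no additional hypothesis on $\gamma$ beyond $k$-linearity and \eqref{entmor} is needed for this statement.
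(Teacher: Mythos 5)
Your proposal is correct and follows essentially the same route as the paper: define $F^n(\alpha,\gamma)$ as the dual of $\gamma\otimes\alpha^{n+1}$, use the compatibility $(\alpha\otimes\gamma)\circ\psi=\psi'\circ(\gamma\otimes\alpha)$ together with multiplicativity of $\alpha$ to handle the $\psi$-wraparound face $\delta_0$, and use the same relation again to show the cyclic condition \eqref{lambda1} is preserved. The paper's proof is exactly this computation, so there is nothing to add.
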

\begin{proof}
Given $(\alpha,\gamma):(A,C,\psi) \longrightarrow (A',C',\psi')$, we have a morphism
\begin{equation*}
F_n(\alpha,\gamma):=\gamma \otimes \alpha^{n+1}:C \otimes A^{n+1} \longrightarrow C' \otimes A'^{n+1}
\end{equation*}
We denote its dual by $F^n(\alpha,\gamma):Hom(C' \otimes A'^{n+1},k)\longrightarrow Hom(C \otimes A^{n+1},k)$. We first need to show that the following diagram commutes for all $n \geq 0$ and $0 \leq i \leq n+1$:
\begin{equation}
\begin{CD}
Hom(C' \otimes A'^{n+1},k) @>\delta_i >> Hom(C' \otimes A'^{n+2},k) \\
@V F^n(\alpha,\gamma) VV @VV F^{n+1}(\alpha,\gamma) V \\
Hom(C \otimes A^{n+1},k) @>\delta_i>> Hom(C \otimes A^{n+2},k)\\
\end{CD}
\end{equation}
For any $f' \in Hom(C' \otimes A'^{n+1},k)$, we have, for $c\in C$, $a_1,...,a_{n+2}\in A$,
\begin{equation}\label{7.3ps}
\begin{array}{ll}
( F^{n+1}(\alpha,\gamma)\delta_0f)(c \otimes a_1 \otimes \ldots \otimes a_{n+2})&=(\delta_0f)(\gamma(c) \otimes \alpha(a_1) \otimes \ldots \otimes \alpha(a_{n+2}))\\
&=f((\gamma(c))^{\psi'} \otimes \alpha(a_2) \otimes \ldots \otimes \alpha(a_{n+2}) (\alpha(a_1))_{\psi'})\\
&=f( \gamma(c^\psi) \otimes \alpha(a_2) \otimes \ldots \otimes \alpha(a_{n+2}a_{1\psi}) )\\
&=(\delta_0F^n(\alpha,\gamma)f)(c \otimes a_1 \otimes \ldots \otimes a_{n+2})
\end{array}
\end{equation}
The third equality in \eqref{7.3ps} follows by using \eqref{entmor} and the fact that $\alpha$ is an algebra map. Similarly, it may be easily verified  that $F^{n+1}(\alpha,\gamma)\delta_i=\delta_iF^n(\alpha,\gamma)$ for each $1 \leq i \leq n+1$. Finally, if $f'\in \mathscr C^n_\lambda(A',C',\psi')$, we have
\begin{equation}
\begin{array}{ll}
(F^n(\alpha,\gamma)f')(c \otimes a_1 \otimes \ldots \otimes a_{n+1})&=f'(\gamma(c)\otimes \alpha(a_1)\otimes ...\otimes \alpha(a_{n+1}))\\
&= (-1)^nf'(\gamma(c)^{\psi'}\otimes \alpha(a_2)\otimes ...\otimes \alpha(a_{n+1})\otimes \alpha(a_1)_{\psi'})\\
&= (-1)^nf'(\gamma(c^\psi)\otimes \alpha(a_2)\otimes ...\otimes \alpha(a_{n+1})\otimes \alpha(a_{1\psi}))\\
&= (-1)^n(F^n(\alpha,\gamma)f')(c^\psi\otimes a_2\otimes ...\otimes a_{n+1}\otimes a_{1\psi})\\
\end{array}
\end{equation} Hence, $(F^n(\alpha,\gamma)f')\in \mathscr C^n_\lambda(A,C,\psi)$. This proves the result. 
\end{proof}

\begin{rem} The definition in \eqref{entmor} and the proof of Lemma \ref{lem7.1} makes sense even if the $k$-algebra morphism $\alpha:A\longrightarrow A'$ is not unital. 
\end{rem}

Suppose that we have morphisms  $(\alpha_1,\gamma_1):(A,C,\psi) \longrightarrow (A',C',\psi')$ and  $(\alpha_2,\gamma_2):(A',C',\psi') \longrightarrow (A'',C'',\psi'')$ of entwining structures. Then, we note that $F^\bullet(\alpha_2 \circ \alpha_1, \gamma_2 \circ \gamma_1)=F^\bullet(\alpha_1,\gamma_1) \circ  F^\bullet(\alpha_2,\gamma_2)$.

\smallskip
For any algebra $A$, let $\mathbb U(A):=\{\mbox{$x\in A$ $\vert$ $\exists$ $y\in A$ such that $xy=yx=1_A$}\}$ be the group  of units of $A$. Given an entwining structure $(A,C,\psi)$, we set
\begin{equation*}
\mathbb U_\psi(A):=\{x \in \mathbb U(A)~ |~ \psi(c \otimes x)=x \otimes c~\mbox{for every}\textrm{ } c \in C\}
\end{equation*}

\begin{lem}
Let $(A,C,\psi)$ be an entwining structure. Then,  $\mathbb U_\psi(A)$ is a subgroup of $\mathbb U(A)$.
\end{lem}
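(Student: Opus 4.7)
The plan is to verify the three subgroup axioms directly from the entwining conditions in \eqref{ent}.

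\smallskip
\emph{Identity.} The last condition of \eqref{ent} says precisely that $\psi(c\otimes 1_A)=1_A\otimes c$ for every $c\in C$, so $1_A\in \mathbb U_\psi(A)$.

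\smallskip
\emph{Closure under multiplication.} For $x,y\in \mathbb U_\psi(A)$, I would compute
$\psi(c\otimes xy)$ using the multiplicativity axiom
$\psi(c\otimes xy)=((\theta\otimes id_C)\circ (id_A\otimes \psi)\circ (\psi\otimes id_A))(c\otimes x\otimes y)$.
Since $\psi(c\otimes x)=x\otimes c$, applying $id_A\otimes \psi$ gives $x\otimes \psi(c\otimes y)=x\otimes y\otimes c$, and multiplying collapses this to $xy\otimes c$. Since $xy\in \mathbb U(A)$, we conclude $xy\in \mathbb U_\psi(A)$.

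\smallskip
\emph{Closure under inverses.} This is the step where one has to do genuine work. Suppose $x\in \mathbb U_\psi(A)$ with inverse $x^{-1}$. Write $\psi(c\otimes x^{-1})=(x^{-1})_\psi\otimes c^\psi\in A\otimes C$ (with summation suppressed). Using multiplicativity as above together with $\psi(c\otimes x)=x\otimes c$, we get
\begin{equation*}
1_A\otimes c=\psi(c\otimes 1_A)=\psi(c\otimes xx^{-1})=x\cdot (x^{-1})_\psi\otimes c^\psi.
\end{equation*}
Applying the $k$-linear map $\ell_{x^{-1}}\otimes id_C:A\otimes C\to A\otimes C$, where $\ell_{x^{-1}}(a)=x^{-1}a$, yields $x^{-1}\otimes c=(x^{-1})_\psi\otimes c^\psi=\psi(c\otimes x^{-1})$. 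Hence $x^{-1}\in \mathbb U_\psi(A)$.

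\smallskip
The main obstacle is the inverse step, because one has to extract $\psi(c\otimes x^{-1})$ from an identity on $\psi(c\otimes xx^{-1})$; the trick is to use left multiplication by $x^{-1}$ on the first tensor factor after invoking the multiplicativity axiom and $x\in \mathbb U_\psi(A)$. The other two axioms are routine consequences of \eqref{ent}.
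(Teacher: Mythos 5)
Your proposal is correct and follows essentially the same route as the paper: the identity and closure steps are the same direct applications of \eqref{ent}, and your inverse step (left-multiplying $1_A\otimes c = x\cdot(x^{-1})_\psi\otimes c^\psi$ by $x^{-1}$ in the first tensor factor) is exactly the paper's computation $y\otimes c=\sum_i yxy_i\otimes c_i=\sum_i y_i\otimes c_i$ written in suppressed-summation notation.
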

\begin{proof}
Clearly, $1_A \in \mathbb U_\psi(A)$. Let $x, x' \in \mathbb U_\psi(A)$. Then, for any $c\in C$, we have
\begin{equation*}
\psi(c \otimes xx')=(\theta \otimes id_C)(id_A \otimes \psi)(\psi \otimes id_A)(c \otimes x \otimes x')=xx' \otimes c
\end{equation*}
where $\theta:A\otimes A\longrightarrow A$ is the product on $A$.  Hence, $xx' \in \mathbb U_\psi(A)$.

\smallskip
Now let $x \in \mathbb U_\psi(A)$ and let $y\in \mathbb U(A)$ be its inverse. We will show that $y \in \mathbb U_\psi(A)$. For this, we set $\psi(c \otimes y)=\sum_i y_i \otimes c_i\in A\otimes C$. Then,  we have
\begin{equation*}
1_A \otimes c=\psi(c \otimes xy)=(\theta \otimes id_C)(id_A \otimes \psi)(\psi \otimes id_A)(c \otimes x \otimes y)=\sum_i xy_i \otimes c_i
\end{equation*}
Therefore, $y \otimes c=\sum_i yxy_i \otimes c_i=\sum_i y_i \otimes c_i=\psi(c \otimes y)$. Hence, $y \in \mathbb U_\psi(A)$.
\end{proof}

\begin{lem}\label{lem7.3c}
 Let $(A,C,\psi)$ be an entwining structure and let $x \in \mathbb U_\psi(A)$. Then,

\smallskip
(1) the pair $(\phi_x,id_C):(A,C,\psi)\longrightarrow (A,C,\psi)$
is a morphism of entwining structures, where $\phi_x:A\longrightarrow A$ is the inner automorphism given by $\phi_x(a):=xax^{-1}$ for all $a\in A$.

\smallskip
(2) the pair $(\Phi_x,id_C):(M_2(A),C,\psi)\longrightarrow (M_2(A),C,\psi)$ is a morphism of entwining structures, where  $\Phi_x :M_2(A)\longrightarrow M_2(A)$ is the inner automorphism given by
\begin{equation*}
\Phi_x\begin{pmatrix}
a_{11}&a_{12}\\
a_{21}&a_{22}
\end{pmatrix}=\begin{pmatrix}1 & 0 \\ 0 & x \\ \end{pmatrix}\cdot \begin{pmatrix}
a_{11}&a_{12}\\
a_{21}&a_{22}
\end{pmatrix} \cdot \begin{pmatrix}1 & 0 \\ 0 & x^{-1} \\ \end{pmatrix}
\end{equation*}
\end{lem}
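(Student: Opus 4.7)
The plan is to verify directly in each case the entwining-compatibility condition \eqref{entmor}, namely $(\alpha \otimes \gamma)\circ \psi = \psi \circ (\gamma \otimes \alpha)$, with $\gamma = id_C$ throughout. Part (2) will then be reduced to Part (1) applied to the entwining structure $(M_2(A),C,\psi)$.

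For Part (1) I need to show $\psi(c \otimes xax^{-1}) = \phi_x(a_\psi) \otimes c^\psi = x a_\psi x^{-1} \otimes c^\psi$ for all $c \in C$, $a \in A$. The two ingredients are: (a) by the previous lemma, $x^{-1}$ also lies in $\mathbb{U}_\psi(A)$, so both $x$ and $x^{-1}$ are transparent under $\psi$ in the sense that $\psi(c \otimes x) = x \otimes c$ and $\psi(c \otimes x^{-1}) = x^{-1} \otimes c$; and (b) the multiplicativity axiom in \eqref{ent}, iterated twice, gives
\begin{equation*}
\psi(c \otimes xax^{-1}) = x_\psi \, a_\psi \, (x^{-1})_\psi \otimes c^{\psi^3},
\end{equation*}
where the three $\psi$'s denote successive propagation of the entwining through the three factors. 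Substituting the transparency relations for $x$ and $x^{-1}$ collapses two of the three coalgebra indices to the identity, yielding $x a_\psi x^{-1} \otimes c^\psi$, which is exactly the left-hand side of the required compatibility. This establishes Part (1).

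For Part (2), I would set $\tilde x := \begin{pmatrix} 1 & 0 \\ 0 & x \end{pmatrix} \in M_2(A)$ and observe that $\Phi_x$ is the inner automorphism $\phi_{\tilde x}$ of $M_2(A)$. It therefore suffices, by Part (1) applied to the entwining structure $(M_2(A),C,\psi)$, to show that $\tilde x \in \mathbb{U}_\psi(M_2(A))$. Writing $\tilde x = 1_A \otimes E_{11}(1) + x \otimes E_{22}(1)$, the extended $\psi$ on $M_2(A) = A \otimes M_2(k)$ (which acts trivially on the matrix factor, as defined in Section 5) gives $\psi(c \otimes \tilde x) = (1_\psi \otimes E_{11}(1))\otimes c^\psi + (x_\psi \otimes E_{22}(1))\otimes c^\psi$, and the two summands collapse to $(1 \otimes E_{11}(1))\otimes c$ and $(x \otimes E_{22}(1))\otimes c$ respectively using the unit axiom $1_\psi \otimes c^\psi = 1 \otimes c$ and the hypothesis $x \in \mathbb{U}_\psi(A)$. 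This shows $\tilde x \in \mathbb{U}_\psi(M_2(A))$ and completes the proof.

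The only delicate point is the bookkeeping of the suppressed summations and iterated coalgebra indices in Part (1); everything else is routine once one recognizes that units in $\mathbb{U}_\psi(A)$ commute trivially with the entwining. No new structural input beyond the preceding lemma and the basic axioms \eqref{ent} is required.
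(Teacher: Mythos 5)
Your proposal is correct. For Part (1) you argue exactly as the paper does: invoke the preceding lemma to get $x^{-1}\in\mathbb U_\psi(A)$, expand $\psi(c\otimes xax^{-1})$ via the multiplicativity axiom in \eqref{ent}, and collapse the coalgebra legs attached to $x$ and $x^{-1}$ using their transparency, arriving at $xa_\psi x^{-1}\otimes c^\psi$. For Part (2) you diverge from the paper: the paper simply writes out the conjugated $2\times 2$ matrix $\Phi_x\bigl(\begin{smallmatrix}a_{11}&a_{12}\\a_{21}&a_{22}\end{smallmatrix}\bigr)=\bigl(\begin{smallmatrix}a_{11}&a_{12}x^{-1}\\xa_{21}&xa_{22}x^{-1}\end{smallmatrix}\bigr)$ and applies $\psi$ entrywise, repeating the Part (1) computation in each slot, whereas you first check that $\tilde x=\mathrm{diag}(1,x)$ lies in $\mathbb U_\psi(M_2(A))$ (a one-line verification from the definition of the extended entwining on $M_2(A)=A\otimes M_2(k)$ together with the unit axiom $1_\psi\otimes c^\psi=1\otimes c$) and then apply Part (1) to the entwining structure $(M_2(A),C,\psi)$. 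Your reduction is slightly more economical and structural, avoiding the repeated entrywise computation; it is legitimate because Section 5 of the paper already establishes that $(M_r(A),C,\psi)$ is an entwining structure, so Part (1) applies verbatim to it. The paper's direct computation, on the other hand, makes the conjugated matrix explicit, which is mildly convenient when the same matrices reappear in Proposition \ref{vanish}. Either way, no gap: both routes rest only on the subgroup property of $\mathbb U_\psi$ and the axioms \eqref{ent}.
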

\begin{proof}
{\it (1)} Since $x,x^{-1} \in \mathbb U_\psi(A)$, we have
\begin{equation*}
\left(\psi \circ (id_C \otimes \phi_x)\right)(c \otimes a)=\psi(c \otimes xax^{-1})=xa_\psi x^{-1} \otimes c^\psi=\left((\phi_x \otimes id_C)\circ \psi\right)(c \otimes a)
\end{equation*}
for any $c \in C$ and $a \in A$. This shows that $(\phi_x,id_C)$ is a morphism of entwining structures.

\smallskip
{\it (2)} For any $c \in C $ and $\begin{pmatrix}
a_{11}&a_{12}\\
a_{21}&a_{22}
\end{pmatrix} \in M_2(A)$, we have
\begin{equation*}
\begin{array}{ll}
\left(\psi \circ (id_C \otimes \Phi_x)\right)\left(c \otimes \begin{pmatrix}
a_{11}&a_{12}\\
a_{21}&a_{22}
\end{pmatrix} \right)&=\psi\left(c \otimes  \begin{pmatrix}
a_{11}&a_{12}x^{-1}\\
xa_{21}&xa_{22}x^{-1}
\end{pmatrix}\right)\\\\
&=\begin{pmatrix}
a_{11\psi}&a_{12\psi}x^{-1}\\
xa_{21\psi}&xa_{22\psi}x^{-1}
\end{pmatrix}\otimes c^\psi=\left((\Phi_x \otimes id_C)\circ \psi\right)\left(c \otimes \begin{pmatrix}
a_{11}&a_{12}\\
a_{21}&a_{22}
\end{pmatrix} \right)
\end{array}
\end{equation*}
Hence, $(\Phi_x,id_C)$ is a morphism of entwining structures.
\end{proof}

For $x\in \mathbb U_\psi(A)$, we will always denote by $\phi_x:A\longrightarrow A$ and $\Phi_x :M_2(A)\longrightarrow M_2(A)$ the inner automorphisms described in Lemma \ref{lem7.3c}.

\smallskip
Let $(A,C,\psi)$ and $(A',C',\psi')$ be entwining structures. Then, it may be easily seen that the tuple $(A \otimes A', C \otimes C', \psi \otimes \psi')$ is also an entwining structure with the entwining $\psi \otimes \psi': (C \otimes C') \otimes (A \otimes A') \longrightarrow (A \otimes A') \otimes (C \otimes C')$given by
\begin{equation*}
(\psi \otimes \psi')(c \otimes c' \otimes a \otimes a')=a_\psi \otimes a'_{\psi'} \otimes c^\psi \otimes c'^{\psi'}
\end{equation*}
for any $c \otimes c' \in C \otimes C'$ and $a \otimes a' \in A \otimes A'$. 

\begin{lem}\label{unitten}
Let $(A,C,\psi)$ and $(A',C',\psi')$ be entwining structures. Let $x \in \mathbb U_\psi(A)$ and $x' \in \mathbb U_{\psi'}(A')$. Then, $x \otimes x' \in  \mathbb U_{\psi \otimes \psi'}(A \otimes A')$.
\end{lem}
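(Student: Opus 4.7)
The plan is to verify the two conditions in the definition of $\mathbb U_{\psi\otimes\psi'}(A\otimes A')$ directly: namely, that $x\otimes x'$ is a unit of $A\otimes A'$, and that it is fixed in the sense that $(\psi\otimes\psi')(c\otimes c'\otimes x\otimes x')=(x\otimes x')\otimes(c\otimes c')$ for every $c\otimes c'\in C\otimes C'$.

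First I would observe that since $x\in\mathbb U(A)$ with inverse $x^{-1}$ and $x'\in\mathbb U(A')$ with inverse $x'^{-1}$, the element $x^{-1}\otimes x'^{-1}\in A\otimes A'$ is a two-sided inverse of $x\otimes x'$, as the multiplication on $A\otimes A'$ is componentwise. Hence $x\otimes x'\in\mathbb U(A\otimes A')$.

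Next I would exploit the definition
\begin{equation*}
(\psi\otimes\psi')(c\otimes c'\otimes a\otimes a')=a_\psi\otimes a'_{\psi'}\otimes c^\psi\otimes c'^{\psi'}
\end{equation*}
with $a=x$ and $a'=x'$. Since $x\in\mathbb U_\psi(A)$, the relation $\psi(c\otimes x)=x\otimes c$ gives $x_\psi\otimes c^\psi=x\otimes c$ (with the usual convention of suppressing summation), and likewise $x'_{\psi'}\otimes c'^{\psi'}=x'\otimes c'$ from $x'\in\mathbb U_{\psi'}(A')$. Substituting,
\begin{equation*}
(\psi\otimes\psi')(c\otimes c'\otimes x\otimes x')=x\otimes x'\otimes c\otimes c',
\end{equation*}
which is the required invariance condition.

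There is no real obstacle here; the statement is a compatibility check, and the key point is simply that both the product structure and the entwining of the tensor product entwining structure are built componentwise, so the unit-invariance conditions on each factor transfer directly to the tensor product. Combined with the two observations above, this yields $x\otimes x'\in\mathbb U_{\psi\otimes\psi'}(A\otimes A')$.
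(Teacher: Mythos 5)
Your proof is correct and is simply the spelled-out version of what the paper does, since the paper's own proof reads ``This follows immediately from the definitions.'' Both the unit condition and the invariance condition are checked componentwise exactly as you describe, and linearity reduces the check to pure tensors $c\otimes c'$, so there is nothing further to add.
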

\begin{proof}
This follows immediately from the definitions.
\end{proof}

 We now show that conjugation by a unit $x\in \mathbb U_\psi(A)$ induces the identity on cyclic cohomology 
of $(A,C,\psi)$.

\begin{thm}\label{prop7.2}
Let $(A,C,\psi)$ be an entwining structure. For any $x\in \mathbb U_\psi(A)$, the pair  $(\phi_x,id_C):(A,C,\psi) \longrightarrow (A,C,\psi)$ induces the identity map on $H^\bullet_\lambda(A,C,\psi)$.
\end{thm}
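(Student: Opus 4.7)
The plan is to use the Morita invariance of Theorem \ref{T6.4} together with Lemma \ref{lem7.3c}(2) to replace $\phi_x$ by the inner automorphism $\Phi_x$ of $M_2(A)$, and then to show that $F^\bullet(\Phi_x)$ is trivial on $H^\bullet_\lambda(M_2(A),C,\psi)$.

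First, I would examine how $\Phi_x$ interacts with the two corner embeddings $inc_p : A \to M_2(A)$, $a \mapsto E_{pp}(a)$. A direct matrix calculation, using that $\Phi_x$ is conjugation by $\mathrm{diag}(1,x)$, gives the identities $\Phi_x \circ inc_1 = inc_1$ (since $\mathrm{diag}(1,x)$ fixes the $(1,1)$-entry) and $\Phi_x \circ inc_2 = inc_2 \circ \phi_x$ (since conjugation by $\mathrm{diag}(1,x)$ acts as $\phi_x$ on the $(2,2)$-entry). Applying the contravariant functor $F^\bullet$ of Lemma \ref{lem7.1} to the first identity gives $F^\bullet(inc_1)\circ F^\bullet(\Phi_x) = F^\bullet(inc_1)$; since $F^\bullet(inc_1) = inc_1^\bullet$ is an isomorphism by Theorem \ref{T6.4}, we obtain $F^\bullet(\Phi_x) = id$ on $H^\bullet_\lambda(M_2(A),C,\psi)$. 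Applying $F^\bullet$ to the second identity then yields $F^\bullet(\phi_x)\circ F^\bullet(inc_2) = F^\bullet(inc_2)\circ F^\bullet(\Phi_x) = F^\bullet(inc_2)$.

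To conclude $F^\bullet(\phi_x) = id$, I need that $F^\bullet(inc_2)$ is an isomorphism. I would establish this via the swap automorphism $\iota : M_2(A) \to M_2(A)$ defined by $\iota(E_{ij}(a)) = E_{\sigma(i)\sigma(j)}(a)$ for the transposition $\sigma=(1\,2)$. A routine check shows that $\iota$ is a $k$-algebra isomorphism, and because the entwining on $M_2(A)$ affects only the $A$-component of a matrix unit, $(\iota,id_C)$ is a morphism of entwining structures. Hence $F^\bullet(\iota)$ is an isomorphism of cyclic cohomology, and the identity $inc_2 = \iota\circ inc_1$ gives $F^\bullet(inc_2) = F^\bullet(inc_1)\circ F^\bullet(\iota)$, a composition of isomorphisms. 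The earlier relation then collapses to $F^\bullet(\phi_x) = id$. The main subtlety is this last step establishing the bijectivity of $F^\bullet(inc_2)$; everything else follows formally from contravariant functoriality (Lemma \ref{lem7.1}), Lemma \ref{lem7.3c}, and Morita invariance.
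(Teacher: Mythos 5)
Your proposal is correct and takes essentially the same route as the paper: pass to $M_2(A)$, observe that $\Phi_x\circ inc_1=inc_1$ and $\Phi_x\circ inc_2=inc_2\circ\phi_x$, deduce $F^\bullet(\Phi_x,id_C)=id$ by cancelling the isomorphism $F^\bullet(inc_1,id_C)$ coming from Morita invariance, and then cancel $F^\bullet(inc_2,id_C)$. The only point handled differently is the invertibility of $F^\bullet(inc_2,id_C)$: the paper gets it from the chain-level identity $tr_\bullet\circ inc_{2\bullet}=id$, which gives $F^\bullet(inc_2,id_C)=(tr^\bullet)^{-1}=F^\bullet(inc_1,id_C)$, whereas you factor $inc_2=\iota\circ inc_1$ through the swap automorphism of $M_2(A)$; both arguments are valid.
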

\begin{proof} Using Lemma \ref{lem7.1}, we have morphisms $inc_1^\bullet=F^\bullet(inc_1,id_C)$, $inc_2^\bullet=F^\bullet(inc_2,id_C):H^\bullet_\lambda(M_r(A),C,\psi) \longrightarrow H^\bullet_\lambda(A,C,\psi)$. 
By Theorem \ref{T6.4}, the maps
\begin{equation*}
inc_1^\bullet:H^\bullet_\lambda(M_r(A),C,\psi) \longrightarrow H^\bullet_\lambda(A,C,\psi)\qquad 
tr^\bullet:H^\bullet_\lambda(A,C,\psi) \longrightarrow H^\bullet_\lambda(M_r(A),C,\psi)
\end{equation*}
are mutually inverse isomorphims.  Therefore,
\begin{equation}\label{eq7.3g}
F^\bullet(inc_2,id_C) \circ {(F^\bullet(inc_1,id_C))}^{-1}=F^\bullet(inc_2,id_C)\circ tr^\bullet= F^\bullet(tr \circ inc_2,id_C)=id_{H^\bullet_\lambda(A,C,\psi)}
\end{equation}
We notice that we have  the following commutative diagram:
\begin{equation*}
\begin{CD}
A @>inc_1>>  M_2(A) @<inc_2<< A\\
@Vid_AVV        @VV \Phi_x V @VV \phi_x V\\
A    @> inc_1>>M_2(A)@<inc_2<< A\\
\end{CD}
\end{equation*}
Using Lemma \ref{lem7.3c}, we know that $(\phi_x,id_C)$ and $(\Phi_x,id_C)$ are morphisms of entwining structures. Therefore, we obtain the following commutative diagram:
\begin{equation*}
\begin{CD}
H^\bullet_\lambda(A,C,\psi) @< F^\bullet(inc_1,id_C)<<H^\bullet_\lambda(M_2(A),C,\psi) @> F^\bullet(inc_2,id_C)>> H^\bullet_\lambda(A,C,\psi)\\
@A{F^\bullet(id_A,id_C)}AA        @AAF^\bullet(\Phi_x,id_C)A @AAF^\bullet(\phi_x,id_C)A\\
H^\bullet_\lambda(A,C,\psi) @< F^\bullet(inc_1,id_C) <<H^\bullet_\lambda(M_2(A),C,\psi) @>F^\bullet(inc_2,id_C) >> H^\bullet_\lambda(A,C,\psi)
\end{CD}
\end{equation*}
Therefore, using \eqref{eq7.3g}, we get
\begin{equation*}
F^\bullet(\phi_x,id_C)=  \left(F^\bullet(inc_2,id_C)\right)\circ F^\bullet(inc_1,id_C)^{-1} \circ F^\bullet(id_A,id_C)\circ \left(F^\bullet(inc_1,id_C)\right) \circ F^\bullet(inc_2,id_C)^{-1}=id_{H^\bullet_\lambda(A,C,\psi)}
\end{equation*}
\end{proof}

\begin{thm}\label{vanish}
Let $(A,C,\psi)$ be an entwining structure. Suppose that there is an algebra morphism $\nu:A \longrightarrow A$ and an element $X\in \mathbb U_\psi(M_2(A))$ such that 

\smallskip

\smallskip
(1) the pair $(\nu,id_C): (A,C,\psi) \longrightarrow (A,C,\psi)$ is a morphism of entwining structures, i.e. $(\nu \otimes id_C) \circ \psi=\psi \circ (id_C \otimes \nu)$

\smallskip
(2) the inner automorphism $\phi_X=X(\_\_)X^{-1}:M_2(A)\longrightarrow M_2(A)$ satisfies  \begin{equation*}
\phi_X\begin{pmatrix}
a&0\\
0&\nu(a)
\end{pmatrix}=\begin{pmatrix}
0&0\\
0&\nu(a)
\end{pmatrix}
\end{equation*}
for all $a\in A$. Then, $H^\bullet_\lambda(A,C,\psi)=0$.
\end{thm}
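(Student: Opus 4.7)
The plan is to run an Eilenberg swindle, using the Morita invariance of Theorem \ref{T6.4} together with Theorem \ref{prop7.2} to force the identity map on $H^\bullet_\lambda(A,C,\psi)$ to coincide with itself plus an extra term, and hence to vanish. I would first introduce two $k$-algebra morphisms $\iota_A, \iota_\nu : A \longrightarrow M_2(A)$ by
\[
\iota_A(a) = \begin{pmatrix} a & 0 \\ 0 & \nu(a) \end{pmatrix}, \qquad \iota_\nu(a) = \begin{pmatrix} 0 & 0 \\ 0 & \nu(a) \end{pmatrix}.
\]
Using hypothesis (1) together with the explicit formula $\psi(c\otimes (b \otimes E_{ij}(1))) = b_\psi \otimes E_{ij}(1)\otimes c^\psi$ for the entwining of $M_2(A)$, one checks directly that $(\iota_A,id_C)$ and $(\iota_\nu,id_C)$ are morphisms of entwining structures $(A,C,\psi) \to (M_2(A),C,\psi)$; the non-unitality of $\iota_\nu$ is permitted by the remark following Lemma \ref{lem7.1}. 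Thus Lemma \ref{lem7.1} supplies induced maps $F^\bullet(\iota_A,id_C)$ and $F^\bullet(\iota_\nu,id_C)$ on cyclic cohomology.

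Hypothesis (2) is exactly the identity $\phi_X \circ \iota_A = \iota_\nu$ of algebra morphisms $A \longrightarrow M_2(A)$. Since $X \in \mathbb{U}_\psi(M_2(A))$, Theorem \ref{prop7.2} applied to $(M_2(A),C,\psi)$ yields $F^\bullet(\phi_X,id_C)=id$ on $H^\bullet_\lambda(M_2(A),C,\psi)$, and contravariant functoriality of $F^\bullet$ (recorded right after Lemma \ref{lem7.1}) then gives
\[
F^\bullet(\iota_\nu,id_C) \;=\; F^\bullet(\iota_A,id_C) \circ F^\bullet(\phi_X,id_C) \;=\; F^\bullet(\iota_A,id_C)
\]
as maps $H^\bullet_\lambda(M_2(A),C,\psi) \longrightarrow H^\bullet_\lambda(A,C,\psi)$.

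The key remaining computation is at the level of the presimplicial modules of Section 5: because $\iota_A(a_i)$ and $\iota_\nu(a_i)$ are diagonal, the generalised trace $tr_\bullet$ collapses to the constant-index summands, giving
\[
tr_\bullet \circ (\iota_A)_\bullet(c, a_1, \ldots, a_{n+1}) \;=\; (c, a_1, \ldots, a_{n+1}) + (c, \nu(a_1), \ldots, \nu(a_{n+1}))
\]
and $tr_\bullet \circ (\iota_\nu)_\bullet(c, a_1, \ldots, a_{n+1}) = (c, \nu(a_1), \ldots, \nu(a_{n+1}))$. Dualising these identities and invoking Theorem \ref{T6.4} to transport $tr^\bullet$ to an isomorphism on cyclic cohomology, they translate into
\[
F^\bullet(\iota_A,id_C) \circ tr^\bullet \;=\; id + F^\bullet(\nu,id_C), \qquad F^\bullet(\iota_\nu,id_C) \circ tr^\bullet \;=\; F^\bullet(\nu,id_C)
\]
as endomorphisms of $H^\bullet_\lambda(A,C,\psi)$. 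Subtracting and using the equality from the previous paragraph yields $id = 0$ on $H^\bullet_\lambda(A,C,\psi)$, which proves the vanishing. The main obstacle is bookkeeping: one must check that $F^\bullet(\iota_\nu,id_C)$ is compatible with the cyclic subcomplex even though $\iota_\nu$ is non-unital, and that the chain-level description of $tr^\bullet$ from Proposition \ref{Prps5.5} is exactly the one promoted to an isomorphism on $H^\bullet_\lambda$ by Theorem \ref{T6.4}.
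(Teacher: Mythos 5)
Your proposal is correct and follows essentially the same route as the paper: the maps $\iota_A,\iota_\nu$ are exactly the paper's $\alpha=inc_1+(inc_2\circ\nu)$ and $\beta=inc_2\circ\nu$, the identity $F^\bullet(\iota_\nu,id_C)=F^\bullet(\iota_A,id_C)$ is obtained from Theorem \ref{prop7.2} in the same way, and the diagonal collapse of the generalized trace is the same final computation (the paper merely performs the subtraction on a cocycle $\tilde g=tr^n(g)$ rather than stating it as the operator identity $F^\bullet(\iota_A,id_C)\circ tr^\bullet=id+F^\bullet(\nu,id_C)$). No gaps.
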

\begin{proof}
Let $\alpha:A \longrightarrow M_2(A)$ and  $\beta:A \longrightarrow M_2(A)$ be the algebra morphisms defined by
\begin{equation*}
\begin{array}{ll}
\alpha(a)=\begin{pmatrix}
a&0\\
0&\nu(a)
\end{pmatrix}=inc_1(a)+ (inc_2 \circ \nu)(a)\\\\
\beta(a)=\begin{pmatrix}
0&0\\
0&\nu(a)
\end{pmatrix}= (inc_2 \circ \nu)(a)
\end{array}
\end{equation*}
for $a \in A$. Since $(inc_2 \circ \nu) \otimes id_C=(inc_2 \otimes id_C)\circ (\nu \otimes id_C)$ and using the fact that the pairs $(\nu,id_C)$, $(inc_1,id_C)$  and $(inc_2,id_C)$ are morphisms of entwining structures, 
 we  see that the pairs $(\alpha,id_C)$ and $(\beta,id_C)$ are also morphisms of entwining structures from $(A,C,\psi)$ to $(M_2(A),C,\psi)$. 
 
\smallskip 
 Applying Lemma \ref{lem7.1}, we now have morphisms on cohomology groups
\begin{equation*}
\begin{array}{ll}
F^\bullet(\alpha,id_C): H^\bullet_\lambda(M_2(A),C,\psi) \longrightarrow H^\bullet_\lambda(A,C,\psi)\\
F^\bullet(\beta,id_C):H^\bullet_\lambda(M_2(A),C,\psi) \longrightarrow H^\bullet_\lambda(A,C,\psi)
\end{array}
\end{equation*}
Further, using assumption {\it (2)}, we have $\phi_X \circ \alpha=\beta$. Applying Proposition \ref{prop7.2} with  $X\in \mathbb U_\psi(M_2(A))$, we obtain
\begin{equation}\label{eq7.4}
F^\bullet(\beta,id_C)=F^\bullet(\phi_X \circ \alpha,id_C)=F^\bullet(\alpha,id_C) \circ F^\bullet(\phi_X,id_C)=F^\bullet(\alpha,id_C)
\end{equation}

\smallskip
Now let $g \in Z^n_\lambda(A,C,\psi)$. Using the isomorphism of cohomology groups in Theorem \ref{T6.4}, we have $\tilde g:=tr^n(g)=g \circ tr \in Z^n_\lambda(M_2(A),C,\psi)$. Let $[\tilde{g}]$ denote the cohomology class of $\tilde{g}$. Then, using \eqref{eq7.4}, we have $F^n(\alpha,id_C)[\tilde{g}]=F^n(\beta,id_C)[\tilde{g}]$ in $H^\bullet_\lambda(A,C,\psi)$. In other words,
\begin{equation*}
\tilde{g} \circ F_n(\alpha,id_C)-\tilde{g} \circ F_n(\beta,id_C) \in B^n_\lambda(A,C,\psi)
\end{equation*}
so that
\begin{equation*}
\tilde{g} \circ F_n(\alpha,id_C)-\tilde{g} \circ F_n(\beta,id_C)=\tilde{g} \circ F_n(inc_1,id_C)=g \in B^n_\lambda(A,C,\psi)
\end{equation*}
This proves the result.
\end{proof}

For the remainder of this paper, we assume $k=\mathbb C$. Let $\mathbf{C}$ be the algebra of infinite matrices with complex entries $(a_{ij})_{i,j \geq 1}$ (see, \cite[p103]{C2}) satisfying the following two conditions:

\begin{itemize}
\item[(i)] the set $\{a_{ij}~ |~ i,j \geq 1\}$ is finite.
\item[(ii)] the number of non-zero entries in each row or each column is bounded.
\end{itemize}

Let $(A,C,\psi)$ be an entwining structure. Then, $\psi$ extends to an entwining $C \otimes \mathbf{C} \otimes A \longrightarrow  \mathbf{C} \otimes A \otimes C$, which we continue to denote by $\psi$ and determined by $c \otimes U \otimes a \mapsto U \otimes a_\psi \otimes c^\psi$ for any $c \in C$, $U \in \mathbf C$ and $a \in A$. Thus, $(\mathbf{C} \otimes A, C, \mathbb{\psi})$ is also an entwining structure. Similarly, $\psi$ can also be extended to obtain an entwining structure $(M_2(\mathbf{C} \otimes A),C,\psi)$.

\begin{lem}\label{lem7.5}
Let $(A,C,\psi)$ be an entwining structure.. Then, $H^\bullet_\lambda(\mathbf{C} \otimes A, C, \mathbb{\psi})=0$.
\end{lem}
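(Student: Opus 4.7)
The plan is to apply Theorem~\ref{vanish} to the entwining structure $(\mathbf{C} \otimes A, C, \psi)$. The key structural observation is that the entwining $\psi$ acts trivially on the $\mathbf{C}$-factor: for $c \in C$ and $U \otimes a \in \mathbf{C} \otimes A$, one has $\psi(c \otimes U \otimes a) = U \otimes a_\psi \otimes c^\psi$. Consequently, any element of $\mathbf{C}$, embedded as $U \otimes 1_A$, lies in $\mathbb U_\psi(\mathbf{C} \otimes A)$; similarly, any invertible element of $M_2(\mathbf{C})$ tensored with $1_A$ lies in $\mathbb U_\psi(M_2(\mathbf{C} \otimes A))$. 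Moreover, for any algebra morphism $\tilde\nu: \mathbf{C} \to \mathbf{C}$, its extension $\nu := \tilde\nu \otimes \mathrm{id}_A$ automatically makes $(\nu, \mathrm{id}_C)$ a morphism of entwining structures, since $\tilde\nu$ commutes with the $\psi$-action (which fixes the $\mathbf{C}$-factor).

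I would therefore reduce the problem to exhibiting an algebra morphism $\tilde\nu : \mathbf{C} \to \mathbf{C}$ and an invertible element $X_0 \in M_2(\mathbf{C})$ such that $\phi_{X_0} \begin{pmatrix} U & 0 \\ 0 & \tilde\nu(U) \end{pmatrix} = \begin{pmatrix} 0 & 0 \\ 0 & \tilde\nu(U) \end{pmatrix}$ for every $U \in \mathbf{C}$. This is precisely the content of Connes' Eilenberg-swindle argument for the vanishing of the cyclic cohomology of $\mathbf{C}$ in \cite{C2}, exploiting the proper infiniteness of $\mathbf{C}$. Concretely, $\mathbf{C}$ contains partial isometries $V_1, V_2$ obeying Cuntz-type relations $V_i^* V_j = \delta_{ij} \cdot 1$ and $V_1 V_1^* + V_2 V_2^* = 1$ -- for instance, $V_i$ can be taken to send the $k$-th standard basis vector to the $(2k-1)$-th (resp.~$(2k)$-th) -- and these partial isometries supply the material for the construction of $\tilde\nu$ and $X_0$. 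Having $(\tilde\nu, X_0)$ in hand, I would set $\nu := \tilde\nu \otimes \mathrm{id}_A$ and $X := X_0 \otimes 1_A \in M_2(\mathbf{C} \otimes A)$; the conjugation identity transports to $\mathbf{C} \otimes A$ by $A$-linearity, $X$ remains invertible, and by the first observation above $X \in \mathbb U_\psi(M_2(\mathbf{C} \otimes A))$.

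The main obstacle is thus the actual construction of the pair $(\tilde\nu, X_0)$ in $M_2(\mathbf{C})$, which rests entirely on the proper infiniteness of $\mathbf{C}$ rather than any feature of $(A, C, \psi)$; the passage to the entwined setting is immediate because $A$ and $C$ are inert with respect to both $\tilde\nu$ and $X_0$. With the pair $(\nu, X)$ satisfying both hypotheses of Theorem~\ref{vanish}, the vanishing $H^\bullet_\lambda(\mathbf{C} \otimes A, C, \psi) = 0$ follows immediately.
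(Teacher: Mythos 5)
Your proposal is correct and follows essentially the same route as the paper: both reduce to Proposition \ref{vanish}, invoke Connes' construction on $\mathbf{C}$ from \cite{C2} to produce the pair $(\nu, X)$, and transport it to $\mathbf{C}\otimes A$ using the fact that the extended entwining acts trivially on the $\mathbf{C}$-factor, so that $X\otimes 1_A\in\mathbb U_\psi(M_2(\mathbf{C}\otimes A))$. The only difference is that you sketch the partial-isometry construction behind Connes' result, whereas the paper simply cites \cite[p104]{C2}.
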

\begin{proof}
We will show that the entwining structure $(\mathbf C \otimes A, C, \psi)$ satisfies the assumptions in Proposition \ref{vanish}. By the result in \cite[p104]{C2}, we know that there exists an algebra morphism $\nu:\mathbf{C} \longrightarrow \mathbf{C}$ and a unit $X \in M_2(\mathbf{C})$ such that the corresponding inner automorphism  $\phi_X:M_2(\mathbf{C}) \longrightarrow M_2(\mathbf{C})$ satisfies
\begin{equation}\label{eq7.7g}
\phi_X\begin{pmatrix}
U&0\\
0&\nu(U)
\end{pmatrix}=\begin{pmatrix}
0&0\\
0&\nu(U)
\end{pmatrix}
\end{equation}
for all $U \in \mathbf{C}$. 

\smallskip
The map $\nu$ extends to an algebra morphism $\nu \otimes id_A:\mathbf{C} \otimes A \longrightarrow \mathbf{C} \otimes A$ such that
\begin{equation*}
(\nu \otimes id_A \otimes id_C) \circ \psi=\psi \circ (id_C \otimes \nu \otimes id_A)
\end{equation*}
Hence, the pair $(\nu \otimes id_A, id_C)$ is a morphism of entwining structures $(\mathbf C \otimes A, C, \psi)\longrightarrow (\mathbf C \otimes A, C, \psi)$. Moreover, under the identification $M_2(\mathbf{C} \otimes A)\cong M_2(\mathbf{C}) \otimes A$, we have the unit $X \otimes 1_A \in M_2(\mathbf{C} \otimes A)$. By definition, $\psi(c \otimes X \otimes 1_A)=X \otimes 1_A \otimes c$. Hence,
$X \otimes 1_A  \in \mathbb{U}_{\psi}(M_2(\mathbf{C}) \otimes A)=\mathbb{U}_{\psi}(M_2(\mathbf{C} \otimes A))$.

\smallskip
Clearly, $\phi_{X \otimes 1_A}=\phi_X \otimes id_A:M_2(\mathbf{C}) \otimes A \longrightarrow M_2(\mathbf{C}) \otimes A$. It then follows using \eqref{eq7.7g}  that 
\begin{equation*}
(\phi_X \otimes id_A)\begin{pmatrix}
U \otimes a&0\\
0&(\nu \otimes id_A)(U \otimes a)
\end{pmatrix}=\begin{pmatrix}
0&0\\
0&(\nu \otimes id_A)(U \otimes a)
\end{pmatrix}
\end{equation*}
for any $U \otimes a \in \mathbf{C} \otimes A$. 
Hence, the entwining structure $(\mathbf{C} \otimes A, C, \psi)$ satisfies the assumptions in Proposition \ref{vanish} and therefore, $H^\bullet_\lambda(\mathbf{C} \otimes A, C, \psi)=0$.
\end{proof}

\begin{defn}\label{vancyc}
Let $(A,C,\psi)$ be an entwining structure and $((R^\bullet,D^\bullet), C,\Psi^\bullet,T,\rho)$ be an $n$-dimensional entwined cycle over $(A,C,\psi)$. Suppose that $R^0$ is a unital $k$-algebra and that $\Psi^0(c\otimes 1_{R^0})=1_{R^0}\otimes c$ for each $c\in C$. Then, we say that the cycle $((R^\bullet,D^\bullet), C,\Psi^\bullet,T,\rho)$ is vanishing if the entwining structure $(R^0,C,\Psi^0)$ satisfies the assumptions in Proposition \ref{vanish}.
\end{defn}

As a consequence of Theorem \ref{T4.4}, we observe that the character of an $n$-dimensional entwined cycle over $(A,C,\psi)$ always lies in $Z^n_\lambda(A,C,\psi)$. We will now describe the coboundaries $B_\lambda^n(A,C,\psi)$.

\begin{Thm}\label{T7.6}
Let $(A,C,\psi)$ be an entwining structure and let $g\in Z_\lambda^n(A,C,\psi)$. Then, the following are equivalent:

\smallskip
(1) $g \in B^n_\lambda(A,C,\psi)$.

\smallskip
(2) $g$ is the character of an $n$-dimensional entwined vanishing cycle over  $(A,C,\psi)$.
\end{Thm}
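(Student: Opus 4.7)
The plan is to prove the two implications separately. For $(2)\Rightarrow (1)$, suppose $g$ is the character of a vanishing cycle $((R^\bullet,D^\bullet),C,\Psi^\bullet,T,\rho)$ over $(A,C,\psi)$. Viewing the same data with $\rho$ replaced by $id_{R^0}$ gives an $n$-dimensional entwined cycle over $(R^0,C,\Psi^0)$ whose character $g_R$ lies in $Z^n_\lambda(R^0,C,\Psi^0)$ by Theorem \ref{T4.4}. Since $(R^0,C,\Psi^0)$ satisfies the hypotheses of Proposition \ref{vanish}, we have $H^n_\lambda(R^0,C,\Psi^0)=0$, so $g_R \in B^n_\lambda(R^0,C,\Psi^0)$. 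Unwinding the character formula shows $F^n(\rho,id_C)(g_R)=g$, and since $F^\bullet(\rho,id_C)$ is a chain map (Lemma \ref{lem7.1} together with the remark permitting non-unital $\rho$), we conclude $g \in B^n_\lambda(A,C,\psi)$.

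For the converse $(1)\Rightarrow (2)$, write $g=\delta h$ with $h \in \mathscr{C}^{n-1}_\lambda(A,C,\psi)$. Set $B:=\mathbf{C}\otimes A$ with the extended entwining, and consider the (non-unital) morphism of entwining structures $\iota:A\to B$, $a\mapsto e_{11}\otimes a$. By Lemma \ref{lem7.5}, $(B,C,\psi)$ satisfies the hypotheses of Proposition \ref{vanish}, and $B$ itself is unital with $\psi(c\otimes 1_B)=1_B\otimes c$. The key step is to lift $h$ to $\tilde h \in \mathscr{C}^{n-1}_\lambda(B,C,\psi)$, which I do by setting
\begin{equation*}
\tilde h(c \otimes U_1 \otimes \cdots \otimes U_n):= h(c\otimes (U_1)_{11} \otimes \cdots \otimes (U_n)_{11}),
\end{equation*}
where for $U=u\otimes a \in \mathbf{C}\otimes A$ the element $(U)_{11}:=u_{11}\cdot a \in A$ is defined, extended $k$-linearly in $U$. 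The identity $((U)_\psi)_{11}=((U)_{11})_\psi$ holds because $u_{11}\in k$ is a scalar, so the cyclicity of $\tilde h$ reduces immediately to that of $h$. Clearly $\iota^*(\tilde h)=h$, hence the coboundary $\tilde g:=\delta \tilde h \in Z^n_\lambda(B,C,\psi)$ satisfies $\iota^*(\tilde g)=\delta(\iota^*\tilde h)=\delta h=g$. Theorem \ref{T4.4} applied to $(B,C,\psi)$ then produces the cycle $((\Omega^\bullet B,d^\bullet),C,\hat\psi,\tilde t,id_B)$ over $(B,C,\psi)$ with character $\tilde g$, and precomposing $id_B$ with $\iota$ yields an $n$-dimensional entwined cycle over $(A,C,\psi)$ whose character is $\iota^*(\tilde g)=g$. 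Its base $R^0=B$ satisfies the hypotheses of Proposition \ref{vanish} by Lemma \ref{lem7.5}, so the cycle is vanishing.

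The main obstacle is the construction of $\tilde h$. The naive attempt to weight $h$ by the generalized trace $tr(u_1\cdots u_n)$ fails because $\mathbf{C}$ contains the identity matrix, making such traces generically infinite. Replacing the trace by the single matrix coordinate $(\cdot)_{11}$ circumvents this convergence obstruction, and the scalar nature of $u_{11}$ is precisely what ensures compatibility with both the cyclic condition and the entwining, via the identity $((U)_\psi)_{11}=((U)_{11})_\psi$.
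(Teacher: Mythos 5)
Your proof is correct and follows essentially the same route as the paper's: for $(1)\Rightarrow(2)$ you extend the cobounding cochain to $\mathbf{C}\otimes A$ via the $(1,1)$-entry and invoke Theorem \ref{T4.4} to get a trace on $\Omega^\bullet(\mathbf{C}\otimes A)$, and for $(2)\Rightarrow(1)$ you view the cycle over $(R^0,C,\Psi^0)$, use vanishing of $H^\bullet_\lambda(R^0,C,\Psi^0)$, and pull back along $\rho$. The only cosmetic deviations are your use of the embedding $a\mapsto e_{11}\otimes a$ where the paper uses $a\mapsto I\otimes a$ (both work, since both have $(1,1)$-entry equal to $1$ and are compatible with the entwining), and your packaging of the paper's explicit cochain $f(c\otimes r_1\otimes\cdots\otimes r_{n+1})=T(c\otimes r_1D(r_2)\cdots D(r_{n+1}))$ as the character of the cycle regarded over its own base.
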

\begin{proof}
{\it(1)} $\Rightarrow$ {\it (2)}. Let $g \in B^n_\lambda(A,C,\psi)$. Then, $g=\delta(g')$ for some $g' \in \mathscr C^{n-1}_\lambda(A,C,\psi)$. Extending $g'$, we obtain an element $\hat{g}' \in \mathscr C^{n-1}(\mathbf C \otimes A,C,\psi)$ as follows
\begin{equation*}
\hat{g}'\left(c \otimes (U_1 \otimes a_1) \otimes \ldots \otimes (U_n \otimes a_n)\right):=g'\left(c \otimes {(U_1)}_{11}a_1 \otimes \ldots \otimes {(U_n)}_{11}a_n \right)
\end{equation*}
for any $c \in C$ and $(U_1 \otimes a_1) \otimes \ldots \otimes (U_n \otimes a_n) \in (\mathbf C \otimes A)^n$. We have
\begin{equation*}
\begin{array}{ll}
\hat{g}'\left(c^\psi \otimes (U_2 \otimes a_2) \otimes \ldots \otimes (U_n \otimes a_n) \otimes (U_1 \otimes a_1)_\psi\right)&=\hat{g}'\left(c^\psi \otimes (U_2 \otimes a_2) \otimes \ldots \otimes (U_n \otimes a_n) \otimes (U_1 \otimes a_{1\psi})\right)\\
&=g'\left(c^\psi \otimes {(U_2)}_{11} a_2 \otimes \ldots \otimes {(U_n)}_{11}a_n \otimes {(U_1)}_{11}a_{1\psi})\right)\\
&=(-1)^{n-1}g'\left(c \otimes {(U_1)}_{11}a_{1} \otimes {(U_2)}_{11} a_2 \otimes \ldots \otimes {(U_n)}_{11}a_n \right)\\
&=(-1)^{n-1}\hat{g}'\left(c \otimes (U_1 \otimes a_1) \otimes \ldots \otimes (U_n \otimes a_n)\right)
\end{array}
\end{equation*}
Hence, $\hat{g}' \in \mathscr C^{n-1}_\lambda(\mathbf C \otimes A,C,\psi)$.

\smallskip
 We now set $\hat{g}'':=\delta(\hat{g}') \in Z^n_\lambda(\mathbf C \otimes A,C,\psi)$. We also consider the algebra morphism $\rho:A \longrightarrow \mathbf{C} \otimes A$ given by $a \mapsto I \otimes a$, where $I$ is the identity matrix in $\mathbf{C}$.
 By the implication {\it(3)} $\Rightarrow$ {\it(2)} in Theorem \ref{T4.4},  there exists an $n$-dimensional closed graded entwined trace $t$ on the dg-entwining structure $\left((\Omega^\bullet(\mathbf{C} \otimes A),d^\bullet),C,\hat\psi\right)$ satisfying in particular that
 \begin{equation}\label{eq7.6}
 \begin{array}{ll}
t(c \otimes \rho(a_1)d(\rho(a_2))\ldots d(\rho(a_{n+1})))&= \hat{g}''(c \otimes \rho(a_1) \otimes \ldots \otimes \rho(a_{n+1}))\\
&=\hat{g}''(c \otimes I \otimes a_1 \otimes \ldots \otimes I \otimes a_{n+1})\\
&=\delta(\hat{g}')(c \otimes I \otimes a_1 \otimes \ldots \otimes I \otimes a_{n+1})\\
&=\delta(g')(c \otimes a_1 \otimes \ldots \otimes a_{n+1})\\
&=g(c \otimes a_1 \otimes \ldots \otimes a_{n+1})
\end{array}
 \end{equation}
Since $(\rho,id_C):(A,C,\psi) \longrightarrow (\mathbf C \otimes A, C,\psi)$ is a morphism of entwining structures, the tuple $((\Omega^\bullet(\mathbf{C} \otimes A),d^\bullet), C, \hat\psi,t, \rho)$ is also an $n$-dimensional entwined cycle over $(A,C,\psi)$. We notice that $\Omega^0(\mathbf{C} \otimes A)=\mathbf C \otimes A$ is unital and $\hat\psi(c\otimes (I\otimes 1_A))=(I\otimes 1_A)\otimes c$ for each $c\in C$. From the proof of Lemma \ref{lem7.5}, we now know that $((\Omega^\bullet(\mathbf{C} \otimes A),d^\bullet), C, \hat\psi,t, \rho)$ is a vanishing cycle. From \eqref{eq7.6} it is clear that $g \in B^n_\lambda(A,C,\psi)$ is the character of this vanishing cycle.

\smallskip
{\it(2)} $\Rightarrow$ {\it (1)}. Let $g \in Z^n_\lambda(A,C,\psi)\subseteq  \mathscr C^n_\lambda(A,C,\psi)$ be the character of an $n$-dimensional entwined vanishing cycle $((R^\bullet,D^\bullet),C,\Psi^\bullet,T,\rho)$ over the entwining structure $(A,C,\psi)$. Then, by definition, the tuple $(R^0,C,\Psi^0)$ is an entwining structure. We define $f \in \mathscr C^n(R^0,C,\Psi^0)$ by setting
\begin{equation*}
f(c \otimes r_1 \otimes \ldots \otimes r_{n+1}):=T(c \otimes r_1D(r_2) \ldots D(r_{n+1}))
\end{equation*}
for any $c \otimes r_1 \otimes \ldots \otimes r_{n+1} \in C \otimes (R^0)^{\otimes n+1}$. Since $\Psi^\bullet$ is a morphism of complexes and $T$ is a closed graded entwined trace of dimension $n$, we have
\begin{equation*}
\begin{array}{ll}
f(c^{\Psi^0} \otimes r_2 \otimes \ldots \otimes r_{n+1} \otimes r_{1\Psi^0})&=T(c^{\Psi^0} \otimes r_2D(r_3)  \ldots D(r_{n+1}) D(r_{1\Psi^0}))\\
&=T(c^{\Psi^1} \otimes r_2D(r_3)  \ldots D(r_{n+1}) D(r_{1})_{\Psi^1})\\
&=(-1)^{n-1} T(c \otimes D(r_1)r_2D(r_3)  \ldots  D(r_{n+1}))\\
&=(-1)^{n-1}\left(T(c \otimes D(r_1r_2)D(r_3)\ldots D(r_{n+1}))-T(c \otimes r_1D(r_2)D(r_3)  \ldots  D(r_{n+1}))\right)\\
&=(-1)^nT(c \otimes r_1D(r_2)D(r_3)  \ldots  D(r_{n+1})))\\
&=(-1)^nf(c \otimes r_1 \otimes \ldots \otimes r_{n+1})
\end{array}
\end{equation*}
This shows that $f \in \mathscr C^n_\lambda(R^0,C,\Psi^0)$. Moreover, using the implication {\it (1)} $\Rightarrow$ {\it (3)} in Theorem \ref{T4.4}, we see that $f \in Z^n_\lambda(R^0,C,\Psi^0)$. Since $((R^\bullet,D^\bullet),C,\Psi^\bullet,T,\rho)$ is a vanishing cycle, we know that $H^\bullet_\lambda(R^0,C,\Psi^0)=0$. Hence, $f=\delta f'$ for some $f' \in \mathscr C^{n-1}_\lambda(R^0,C,\Psi^0)$.

\smallskip
Since $((R^\bullet,D^\bullet),C,\Psi^\bullet)$ is a dg-entwining structure over $(A,C,\psi)$, we have
\begin{equation}\label{last}
(\rho \otimes id_C)\circ \psi=\Psi^0 \circ (id_C \otimes \rho)
\end{equation}
i.e., the pair $(\rho,id_C):(A,C,\psi) \longrightarrow (R^0,C,\Psi^0)$ is a morphism of entwining structures. As in the proof of Lemma \ref{lem7.1},  it is clear that we have an induced morphism of complexes
\begin{equation*}
F^\bullet(\rho,id_C):\mathscr C^{\bullet}_\lambda(R^0,C,\Psi^0) \longrightarrow \mathscr C^{\bullet}_\lambda(A,C,\psi)
\end{equation*}
We set $f'':=F^{n-1}(\rho,id_C)(f')\in \mathscr C^{n-1}_\lambda(A,C,\psi)$. Then,
\begin{equation*}
f''(c \otimes a_1 \otimes \ldots \otimes a_n)=f'(c \otimes \rho(a_1) \otimes \ldots \otimes \rho(a_n))
\end{equation*}
for any $c \otimes a_1 \otimes \ldots \otimes a_n \in C \otimes A^n$. Since $F^\bullet(\rho,id_C)$ is a morphism of complexes, we must have
\begin{equation*}
(\delta f'')(c \otimes a_1 \otimes \ldots \otimes a_{n+1})=(\delta f')(c \otimes \rho(a_1) \otimes \ldots \otimes \rho(a_{n+1}))
\end{equation*}
for any $c \otimes a_1 \otimes \ldots \otimes a_{n+1} \in C \otimes A^{n+1}$. Thus, we obtain
\begin{equation*}
\begin{array}{ll}
g(c \otimes a_1 \otimes \ldots \otimes a_{n+1})&=T(c \otimes \rho(a_1)D(\rho(a_2)) \ldots D(\rho(a_{n+1})))\\
&=f(c \otimes \rho(a_1) \otimes \ldots \otimes \rho(a_{n+1}))\\
&=(\delta f')(c \otimes \rho(a_1) \otimes \ldots \otimes \rho(a_{n+1}))\\
&=(\delta f'')(c \otimes a_1 \otimes \ldots \otimes a_{n+1})
\end{array}
\end{equation*}
Hence, $g=\delta f''$, where $f'' \in \mathscr C^{n-1}_\lambda(A,C,\psi)$. This proves the result.
\end{proof}

Taken together, Theorem \ref{T4.4} and Theorem \ref{T7.6} give a complete description of the cocycles and coboundaries of the complex $\mathscr C^\bullet_\lambda(A,C,\psi)$ in terms of entwined cycles over $(A,C,\psi)$. We conclude by applying these descriptions to obtain a pairing on cyclic cohomologies.

\begin{Thm}\label{fnT}
Let $(A,C,\psi)$ and $(A',C',\psi')$ be entwining structures. Then, we have a pairing
\begin{equation*}
H^m_\lambda(A,C,\psi) \otimes H^n_\lambda(A',C',\psi') \longrightarrow H^{m+n}_\lambda(A \otimes A', C \otimes C', \psi \otimes \psi')
\end{equation*}
for any $m,n \geq 0$.
\end{Thm}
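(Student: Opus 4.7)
The plan is to build a cup-product on the level of entwined cycles using Theorem \ref{T4.4}, following Connes' strategy in \cite{C2} for cyclic cohomology of algebras, and then to show that the construction descends to cohomology via Theorem \ref{T7.6}.

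First I would choose representatives. Given cocycles $g\in Z^m_\lambda(A,C,\psi)$ and $g'\in Z^n_\lambda(A',C',\psi')$, Theorem \ref{T4.4} produces closed graded entwined traces $t:C\otimes \Omega^m A\to k$ and $t':C'\otimes \Omega^n A'\to k$ on the universal dg-entwining structures $((\Omega^\bullet A,d^\bullet),C,\hat\psi)$ and $((\Omega^\bullet A',d^\bullet),C',\hat{\psi'})$, whose characters are $g$ and $g'$ respectively.

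Next I would form a product dg-entwining structure. Take the graded tensor product $(R^\bullet,D^\bullet):=(\Omega^\bullet A\otimes \Omega^\bullet A', d\otimes 1+(-1)^{|\cdot|}\otimes d)$ with its Koszul sign rule for multiplication, and the entwining
\begin{equation*}
\Psi^\bullet:(C\otimes C')\otimes (\Omega^\bullet A\otimes \Omega^\bullet A')\longrightarrow (\Omega^\bullet A\otimes \Omega^\bullet A')\otimes (C\otimes C')
\end{equation*}
defined on a homogeneous element $c\otimes c'\otimes \alpha\otimes \alpha'$ (with $|\alpha|=i$, $|\alpha'|=j$) by moving $\alpha'$ past $c$ using $\hat\psi^{\,i}$ applied $j$ times worth of indexing: concretely, $\Psi(c\otimes c'\otimes \alpha\otimes \alpha')=\alpha_{\hat\psi}\otimes \alpha'_{\hat{\psi'}}\otimes c^{\hat\psi}\otimes {c'}^{\hat{\psi'}}$. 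A direct check (using that $\hat\psi$ and $\hat{\psi'}$ are each morphisms of complexes satisfying the entwining axioms) shows that $((R^\bullet,D^\bullet),C\otimes C',\Psi^\bullet)$ is a dg-entwining structure over $(A\otimes A',C\otimes C',\psi\otimes \psi')$, with the structure map being the identity $A\otimes A'=\Omega^0A\otimes \Omega^0A'$. I would then define the product trace $t\# t':(C\otimes C')\otimes R^{m+n}\to k$ to be $t\otimes t'$ on the top component $\Omega^m A\otimes \Omega^n A'$ and zero elsewhere. Verifying that $t\# t'$ is closed is immediate from closedness of $t$ and $t'$ together with the Leibniz rule. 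The graded entwined trace identity \eqref{entrace} for $t\# t'$ requires a Koszul sign bookkeeping, splitting an element $\alpha_1\otimes \alpha'_1$ of degree $i$ and $\alpha_2\otimes \alpha'_2$ of degree $m+n-i$, then applying the trace property of $t$ on the $\Omega^\bullet A$-factor and of $t'$ on the $\Omega^\bullet A'$-factor; here I would use the commutativity of the diagram \eqref{3.1cd} to identify the entwining actions.

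Having established that $((R^\bullet,D^\bullet),C\otimes C',\Psi^\bullet, t\# t', id)$ is an $(m+n)$-dimensional entwined cycle over $(A\otimes A',C\otimes C',\psi\otimes \psi')$, its character is, by Theorem \ref{T4.4}, an element of $Z^{m+n}_\lambda(A\otimes A',C\otimes C',\psi\otimes \psi')$. On elementary tensors this character is
\begin{equation*}
(g\# g')\bigl((c\otimes c')\otimes (a_1\otimes a'_1)\otimes\cdots\otimes (a_{m+n+1}\otimes a'_{m+n+1})\bigr),
\end{equation*}
and separates (up to Koszul signs arising from the shuffle in $\Psi$) into pieces given by $g$ evaluated on the $A$-coordinates and $g'$ evaluated on the $A'$-coordinates.

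Finally I would verify bilinearity on cohomology. This is where the hard work lies and where Theorem \ref{T7.6} becomes essential: I must show that if $g=\delta h$ is a coboundary, then so is $g\# g'$, and symmetrically. By Theorem \ref{T7.6}, a coboundary $g$ is the character of an $(m)$-dimensional entwined vanishing cycle, namely one whose degree-zero part is $(\mathbf{C}\otimes A,C,\psi)$. Tensoring this vanishing cycle with any entwined cycle for $(A',C',\psi')$ produces a cycle whose degree-zero part is $(\mathbf{C}\otimes A\otimes A', C\otimes C',\psi\otimes \psi')$. The vanishing conditions from Proposition \ref{vanish} (algebra endomorphism $\nu$ and unit $X\in\mathbb U_\psi(M_2(\mathbf C\otimes A))$) extend by tensoring on the right with $\mathrm{id}_{A'}$; Lemma \ref{unitten} guarantees that the resulting element of $M_2(\mathbf C\otimes A\otimes A')$ lies in $\mathbb U_{\psi\otimes \psi'}$, and the inner automorphism identity is preserved. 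Hence the tensor product is again a vanishing cycle over $(A\otimes A',C\otimes C',\psi\otimes \psi')$, and by Theorem \ref{T7.6} its character is a coboundary. The main obstacle I anticipate is the sign bookkeeping for the trace identity on the tensor product together with the careful extension of the vanishing structure through the tensor, but once these are in place the induced map on cohomology is well defined and gives the required pairing.
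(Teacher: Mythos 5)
Your proposal is correct and follows essentially the same route as the paper: represent the cocycles as characters of entwined cycles via Theorem \ref{T4.4}, form the tensor product dg-entwining structure with the product trace supported on the $(m,n)$-bidegree component, and descend to cohomology by showing (via Theorem \ref{T7.6} and Lemma \ref{unitten}) that tensoring a vanishing cycle with any cycle yields a vanishing cycle. The only cosmetic difference is that you work directly with the universal dg-algebras $\Omega^\bullet A$, $\Omega^\bullet A'$ rather than general cycles, which the paper itself reduces to at the corresponding step.
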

\begin{proof}
Let $g \in Z_\lambda^m(A,C,\psi)$ and $g' \in Z_\lambda^n(A',C',\psi')$. Then, by Theorem \ref{T4.4}, we know that $g$ (resp. $g'$) is the character of an $m$ (resp. $n$)-dimensional entwined cycle $((R^\bullet,D^\bullet), C,\Psi^\bullet, T,\rho)$ (resp. $((R'^\bullet,D'^\bullet), C',\Psi'^\bullet, T',\rho')$) over $(A,C,\psi)$ (resp. $(A',C',\psi')$). 

\smallskip
We know  that $((R \otimes R')^\bullet, (D \otimes D')^\bullet)$ is a differential graded algebra, where $(R \otimes R')^p=\bigoplus\limits_{i+j=p} R^i \otimes R'^j$ for $p \geq 0$. The multiplication and differential on $R\otimes R'$ are given respectively by
\begin{align*}
&(r_1 \otimes r_1')(r_2 \otimes r_2')=(-1)^{deg(r_1')deg(r_2)}(r_1r_2 \otimes r_1'r_2')\\
&(D \otimes D')(r \otimes r')=D(r) \otimes r' + (-1)^{deg(r)} r \otimes D'(r')
\end{align*}
for homogenous elements $r_1, r_2, r \in R$ and $r_1', r_2', r' \in R'$. 
Using the fact that $\Psi$ is a map of degree zero and both $\Psi$, $\Psi'$ are morphisms of complexes, we have
\begin{equation*}
\begin{array}{ll}
&\left((D \otimes D' \otimes id_{C \otimes C'}) \circ (\Psi \otimes \Psi'
)\right)(c \otimes c' \otimes r \otimes r')\\
&=(D \otimes D' \otimes id_{C \otimes C'})(r_\Psi \otimes r'_{\Psi'} \otimes c^\Psi \otimes c'^{\Psi'} )\\
&=D(r_\Psi) \otimes r'_{\Psi'} \otimes c^\Psi \otimes c'^{\Psi'}+(-1)^{deg(r_\Psi)} r_\Psi \otimes D'(r'_{\Psi'}) \otimes  c^\Psi \otimes c'^{\Psi'}\\
&=D(r)_\Psi \otimes r'_{\Psi'} \otimes c^\Psi \otimes c'^{\Psi'}+(-1)^{deg(r)} r_\Psi \otimes D'(r')_{\Psi'} \otimes  c^\Psi \otimes c'^{\Psi'}\\
&=(\Psi \otimes \Psi')(c \otimes c' \otimes (D \otimes D')(r \otimes r'))
\end{array}
\end{equation*} where $c\otimes c'\in C\otimes C'$ and $r$, $r'$ are homogeneous elements of $R$ and $R'$ respectively.

\smallskip

This shows that  $\Psi \otimes \Psi'$ is a morphism of complexes. Thus, we obtain a dg-entwining structure $((R \otimes R')^\bullet, (D \otimes D')^\bullet), C \otimes C', \Psi \otimes \Psi')$.  It is also clear that  the tuple $\left(((R \otimes R')^\bullet, (D \otimes D')^\bullet), C \otimes C', \Psi \otimes \Psi', \rho \otimes \rho' \right)$ is a dg-entwining structure over $(A \otimes A', C \otimes C', \psi \otimes \psi')$.
We will now construct a closed graded entwined trace of dimension $(m+n)$ on this dg-entwining structure.

\smallskip
For this, we consider the $k$-linear map $T \otimes T': (C \otimes C') \otimes (R \otimes R')^{m+n} \longrightarrow k$ given by
\begin{equation*}
(T \otimes T') \left(\bigoplus_{i+j=m+n}(c \otimes c' \otimes r_i \otimes r'_j)\right):=T(c \otimes r_m)T'(c \otimes r'_n)
\end{equation*}
for any $c \otimes c'  \in C \otimes C'$ and $\bigoplus\limits_{i+j=m+n} r_i \otimes r'_j \in (R \otimes R')^{m+n}$.

It may be easily verified that $T \otimes T'$ satisfies the condition in \eqref{closed}. Further, using the fact that $T$ and $T'$ are graded entwined traces, we have
\begin{equation*}
\begin{array}{ll}
&(T \otimes T')(c \otimes c' \otimes (r \otimes r')(s \otimes s'))\\
&=(-1)^{deg(r')deg(s)}(T \otimes T')(c \otimes c' \otimes rs \otimes r's')\\
&=(-1)^{deg(r')deg(s)}T(c \otimes (rs)_m)T'(c \otimes (r's')_n)\\
&=(-1)^{deg(r')deg(s)}(-1)^{deg(r)deg(s)}(-1)^{deg(r')deg(s')}T(c^\Psi \otimes (sr_\Psi)_m)T'(c'^{\Psi'} \otimes (s'r'_{\Psi'})_n)\\
&=(-1)^{deg(r')deg(s)}(-1)^{deg(r)deg(s)}(-1)^{deg(r')deg(s')}(-1)^{deg(s')deg(r_\Psi)}(T \otimes T')(c^\Psi \otimes c'^{\Psi'} \otimes (s \otimes s')(r_\Psi \otimes r'_{\Psi'}))\\
&=(-1)^{deg(r \otimes r')deg(s \otimes s')}(T \otimes T')\left((c \otimes c')^{\Psi \otimes \Psi'} \otimes (s \otimes s')(r \otimes r')_{\Psi \otimes \Psi'}\right)
\end{array}
\end{equation*}
for any $c \otimes c' \in C \otimes C'$ and homogeneous elements $r, s  \in R$ and $r', s' \in R'$. This shows that $T \otimes T'$ satisfies the condition in \eqref{entrace} Hence, $T\otimes T'$ is an $(m+n)$-dimensional closed graded entwined trace.

\smallskip
Therefore, $\left(((R \otimes R')^\bullet, (D \otimes D')^\bullet), C \otimes C', \Psi \otimes  \Psi', T \otimes T', \rho \otimes \rho'\right)$ is an $(m+n)$-dimensional entwined cycle over the entwining structure $(A \otimes A', C \otimes C', \psi \otimes \psi')$.  Using Theorem \ref{T4.4}, we know that the character of this cycle, denoted by $g \otimes g' $, lies in $Z_\lambda^{m+n}(A \otimes A', C \otimes C', \psi \otimes \psi')$. The association $(g,g') \mapsto g \otimes g'$  gives a pairing
\begin{equation}\label{xi}
\xi:Z_\lambda^m(A,C,\psi) \otimes Z_\lambda^n(A',C',\psi') \longrightarrow Z_\lambda^{m+n}(A \otimes A', C \otimes C', \psi \otimes \psi')
\end{equation} From the equivalence of (1) and (2) in Theorem \ref{T4.4}, it is clear that this pairing does not depend on the choice of the cycles  $((R^\bullet,D^\bullet), C,\Psi^\bullet, T,\rho)$ and $((R'^\bullet,D'^\bullet), C',\Psi'^\bullet, T',\rho')$ determining $g$ and $g'$ respectively.

\smallskip
To induce the pairing on cohomologies, it suffices to show that $\xi$ restricts to a  pairing
\begin{equation*}
B_\lambda^m(A,C,\psi) \otimes Z_\lambda^n(A',C',\psi') \longrightarrow B_\lambda^{m+n}(A \otimes A', C \otimes C', \psi\otimes\psi')
\end{equation*}
Let $g \in B_\lambda^m(A,C,\psi)$. Then, by Theorem \ref{T7.6}, we know that $g$ is the character of an $m$-dimensional entwined vanishing cycle $((R^\bullet,D^\bullet), C,\Psi^\bullet, T,\rho)$ over $(A,C,\psi)$. In particular, by Definition \ref{vancyc}, we know that $R^0$ is unital and $\Psi^0(c\otimes 1_{R^0})=1_{R^0}\otimes c$ for each $c\in C$. Using the implication
(1) $\Rightarrow$ (2) in Theorem \ref{T4.4}, we might  as well assume that $R'^0$ is unital and that $\Psi'^0(c'\otimes 1_{R'^0})=1_{R'^0}\otimes c'$ for each $c'\in C'$. In fact, we might even assume that $(R'^\bullet,D'^\bullet)=
(\Omega^\bullet A',d'^\bullet)$. 

\smallskip
It now suffices to show that the cycle $\left(((R \otimes R')^\bullet, (D \otimes D')^\bullet), C \otimes C', \Psi \otimes \Psi', T \otimes T', \rho \otimes \rho'\right)$ used in \eqref{xi} is a vanishing cycle. In other words, we need to verify that the entwining structure $(R^0 \otimes R'^0, C \otimes C', \Psi^0 \otimes \Psi'^0)$ satisfies the assumptions in Proposition \ref{vanish}.

\smallskip
Since $((R^\bullet,D^\bullet), C,\Psi^\bullet, T,\rho)$ is a vanishing cycle, there exists an algebra morphism $\nu:R^0 \longrightarrow R^0$ and a unit $X \in \mathbb{U}_{\Psi^0}(M_2(R^0))$ satisfying the assumptions in Proposition \ref{vanish}.
Extending $\nu$, we have the algebra morphism  $\nu \otimes id_{R'^0}:R^0 \otimes R'^0 \longrightarrow R^0 \otimes R'^0$. Identifying $M_2(R^0 \otimes R'^0) \cong M_2(R^0) \otimes R'^0$ and using Lemma \ref{unitten}, we have $X \otimes 1_{R'^0} \in \mathbb{U}_{\Psi^0 \otimes \Psi'^0}(M_2(R_0) \otimes R'^0) \cong \mathbb{U}_{\Psi^0 \otimes \Psi'^0}(M_2(R_0 \otimes R'^0))$. Clearly, the pair $(\nu \otimes id_{R'^0},id_C \otimes id_{C'})$ is  a morphism of entwining structures. Identifying $\phi_{X \otimes 1_{R'^0}}=\phi_X \otimes  id_{R'^0}:M_2(R^0 \otimes R'^0) \longrightarrow M_2(R^0 \otimes R'^0)$, we also see that
\begin{equation*}
(\phi_X \otimes id_{R'^0})\begin{pmatrix}
r \otimes r'&0\\
0&(\nu \otimes id_{R'^0})(r \otimes r')
\end{pmatrix}=\begin{pmatrix}
0&0\\
0&(\nu \otimes id_{R'^0})(r \otimes r')
\end{pmatrix}
\end{equation*}
for any $r \otimes r' \in  R^0 \otimes R'^0$. 

\smallskip
Thus, all the assumptions in Proposition \ref{vanish} are satisfied by the entwining structure $(R^0 \otimes R'^0, C \otimes C', \Psi^0 \otimes \Psi'^0)$. Hence, $\left(((R \otimes R')^\bullet, (D \otimes D')^\bullet), C \otimes C', \Psi \otimes \Psi', T \otimes T', \rho \otimes \rho'\right)$ is a vanishing cycle.
 This proves the result.
\end{proof}

\begin{bibdiv}
	\begin{biblist}
	
	\bib{Abu}{article}{
   author={Abuhlail, J. Y.},
   title={Dual entwining structures and dual entwined modules},
   journal={Algebr. Represent. Theory},
   volume={8},
   date={2005},
   number={2},
   pages={275--295},
 
}

\bib{BBR}{article}{
   author={Balodi, M.},
   author={Banerjee, A.},
   author={Ray, S.}
   title={Entwined modules over linear categories and Galois extensions},
   journal={Israel Journal of Mathematics (to appear)},
}

\bib{BBN}{article}{
   author={Balodi, M.},
   author={Banerjee, A.},
   author={Naolekar, A.}
   title={Weak comp algebras and cup products in secondary Hochschild
cohomology of entwining structures},
   journal={arXiv:1909.05476 [math.RA]},
}

	\bib{Brz1}{article}{
   author={Brzezi\'{n}ski, T.},
   author={Majid, S.},
   title={Coalgebra bundles},
   journal={Comm. Math. Phys.},
   volume={191},
   date={1998},
   number={2},
   pages={467--492},
}

\bib{Brz2.5}{article}{
   author={Brzezi\'{n}ski, T.},
   title={On modules associated to coalgebra Galois extensions},
   journal={J. Algebra},
   volume={215},
   date={1999},
   number={1},
   pages={290--317},
   
}

\bib{Brz2}{article}{
   author={Brzezi\'{n}ski, T.},
   title={The cohomology structure of an algebra entwined with a coalgebra},
   journal={J. Algebra},
   volume={235},
   date={2001},
   number={1},
   pages={176--202},
}

\bib{Brz3}{article}{
   author={Brzezi\'{n}ski, T.},
   title={The structure of corings: induction functors, Maschke-type
   theorem, and Frobenius and Galois-type properties},
   journal={Algebr. Represent. Theory},
   volume={5},
   date={2002},
   number={4},
   pages={389--410},
  
}

\bib{Brz4}{article}{
   author={Brzezi\'{n}ski, T.},
   title={The structure of corings with a grouplike element},
   conference={
      title={Noncommutative geometry and quantum groups},
      address={Warsaw},
      date={2001},
   },
   book={
      series={Banach Center Publ.},
      volume={61},
      publisher={Polish Acad. Sci. Inst. Math., Warsaw},
   },
   date={2003},
   pages={21--35},
}

\bib{BCT0}{article}{
   author={Bulacu, D.},
   author={Caenepeel, S.},
   author={Torrecillas, B.},
   title={Frobenius and separable functors for the category of entwined modules over cowreaths, I: General theory},
   journal={arXiv:1612.09540 [math.CT]},
   date={2018},

}

\bib{BCT}{article}{
   author={Bulacu, D.},
   author={Caenepeel, S.},
   author={Torrecillas, B.},
   title={Frobenius and separable functors for the category of entwined
   modules over cowreaths, II: applications},
   journal={J. Algebra},
   volume={515},
   date={2018},
   pages={236--277},

}

\bib{CaDe}{article}{
   author={Caenepeel, S.},
   author={De Groot, E.},
   title={Modules over weak entwining structures},
   conference={
      title={New trends in Hopf algebra theory},
      address={La Falda},
      date={1999},
   },
   book={
      series={Contemp. Math.},
      volume={267},
      publisher={Amer. Math. Soc., Providence, RI},
   },
   date={2000},
   pages={31--54},
}

\bib{C1}{article}{
   author={Connes, A.},
   title={Cohomologie cyclique et foncteurs ${\rm Ext}^n$},
   language={French, with English summary},
   journal={C. R. Acad. Sci. Paris S\'{e}r. I Math.},
   volume={296},
   date={1983},
   number={23},
   pages={953--958},

}

\bib{C2}{article}{
   author={Connes, A.},
   title={Noncommutative differential geometry},
   journal={Inst. Hautes \'{E}tudes Sci. Publ. Math.},
   number={62},
   date={1985},
   pages={257--360},

}

\bib{Jia}{article}{
   author={Jia, L.},
   title={The sovereign structure on categories of entwined modules},
   journal={J. Pure Appl. Algebra},
   volume={221},
   date={2017},
   number={4},
   pages={867--874},
   issn={0022-4049},

}

\bib{Lod2}{book}{
   author={Loday, J.-L.},
   title={Cyclic homology},
   series={Grundlehren der Mathematischen Wissenschaften [Fundamental
   Principles of Mathematical Sciences]},
   volume={301},
   edition={2},
   publisher={Springer-Verlag, Berlin},
   date={1998},
   pages={xx+513},
}

\bib{Schbg}{article}{
   author={Schauenburg, P.},
   title={Doi-Koppinen Hopf modules versus entwined modules},
   journal={New York J. Math.},
   volume={6},
   date={2000},
   pages={325--329},
}

\end{biblist}

\end{bibdiv}

\end{document}